\theoremstyle{plain}
\newtheorem{thm}{\protect\theoremname}
\theoremstyle{definition}
\newtheorem{defn}{\protect\definitionname}
\theoremstyle{plain}
\newtheorem{conjecture}{\protect\conjecturename}
\theoremstyle{plain}
\newtheorem{lem}{\protect\lemmaname}
\theoremstyle{plain}
\newtheorem{prop}{\protect\propositionname}
\theoremstyle{plain}
\newtheorem{cor}{\protect\corollaryname}
\theoremstyle{remark}
\newtheorem{rem}{\protect\remarkname}
\providecommand{\keywords}[1]{\textbf{Index terms---} #1}
\date{\today}
\def\E#1{\mathbb{E}}
  \providecommand{\definitionname}{Definition}
  \providecommand{\lemmaname}{Lemma}
  \providecommand{\remarkname}{Remark}
\providecommand{\theoremname}{Theorem}
\def\Ent{\operatorname{Ent}}
\providecommand{\conjecturename}{Conjecture}
\providecommand{\corollaryname}{Corollary}
\providecommand{\propositionname}{Proposition}
\providecommand{\conjecturename}{Conjecture}
\providecommand{\corollaryname}{Corollary}
\providecommand{\definitionname}{Definition}
\providecommand{\lemmaname}{Lemma}
\providecommand{\propositionname}{Proposition}
\providecommand{\remarkname}{Remark}
\providecommand{\theoremname}{Theorem}
\theoremstyle{thmstyleone}%
\theoremstyle{thmstyletwo}%
\theoremstyle{thmstylethree}%
\begin{document}

\title[On the $\Phi$-Stability and Related Conjectures]{On the $\Phi$-Stability and Related Conjectures\footnote{This arXiv version contains more proof details than the  version published on Probability Theory and Related Fields.}}


\author*[1]{\fnm{Lei} \sur{Yu}}\email{leiyu@nankai.edu.cn} 

\affil*[1]{\orgdiv{School of Statistics and Data Science, LPMC, KLMDASR,
and LEBPS}, \orgname{Nankai University}, \orgaddress{\street{94 Weijin Rd}, \city{Tianjin}, \postcode{300071}, \state{Tianjin}, \country{China}}} 
 

\abstract{Given a convex function $\Phi:[0,1]\to\mathbb{R}$ and the mean $\mathbb{E}f(\mathbf{X})=a\in[0,1]$,
which Boolean function $f$ maximizes the $\Phi$-stability $\mathbb{E}[\Phi(T_{\rho}f(\mathbf{X}))]$
of $f$? Here $\mathbf{X}$ is a random vector uniformly distributed
on the discrete cube $\{-1,1\}^{n}$ and $T_{\rho}$ is the Bonami-Beckner
operator.  Special cases of this problem include the (symmetric and
asymmetric) $\alpha$-stability problems and the ``Most Informative
Boolean Function'' problem. In this paper, we provide several upper
bounds for the maximal $\Phi$-stability. When specializing $\Phi$
to some particular forms, by these upper bounds, we partially resolve
Mossel and O'Donnell's conjecture on $\alpha$-stability with $\alpha>2$,
Li and Médard's conjecture on $\alpha$-stability with $1<\alpha<2$,
and Courtade and Kumar's conjecture on the ``Most Informative Boolean
Function'' which corresponds to a conjecture on $\alpha$-stability
with $\alpha=1$. Our proofs are based on discrete Fourier analysis,
optimization theory, and improvements of the Friedgut--Kalai--Naor
(FKN) theorem. Our improvements of the FKN theorem are sharp or asymptotically
sharp for certain cases. }

\keywords{Mossel--O'Donnell Conjecture, Courtade--Kumar Conjecture,
Li--Médard Conjecture, Most Informative Boolean Function, Noise Stability,
FKN Theorem, Boolean Function}


\pacs[MSC Classification]{60E15, 68Q87, 60G10}

\maketitle

\section{\label{sec:Introduction}Introduction}

Let $\mathbf{X}$ be a random vector uniformly distributed on the
discrete cube $\{-1,1\}^{n}$. Let $\mathbf{Y}\in\{-1,1\}^{n}$ be
the random vector obtained by independently changing the sign of each
component of $\mathbf{X}$ with the same probability $\frac{1-\rho}{2}$.
Here, $\rho\in[0,1]$ corresponds to the correlation coefficient between
each component of $\mathbf{X}$ and the corresponding one of $\mathbf{Y}$.
Let $T_{\rho}$ be the noise operator (or the \emph{Bonami-Beckner
operator}) which acts on Boolean functions $f:\{-1,1\}^{n}\to\{0,1\}$
such that $T_{\rho}f(\mathbf{x})=\mathbb{E}[f(\mathbf{Y})|\mathbf{X}=\mathbf{x}]$.
Let $\Phi:[0,1]\to\mathbb{R}$ be a continuous and strictly convex
function. 
\begin{defn}
For a Boolean function $f:\{-1,1\}^{n}\to\{0,1\}$, the \emph{$\Phi$-stability}
of $f$ with respect to (w.r.t.) correlation coefficient $\rho$ is
defined as 
\begin{align*}
\mathbf{Stab}_{\Phi}[f] & =\mathbb{E}[\Phi(T_{\rho}f(\mathbf{X}))].
\end{align*}
\end{defn}

The $\Phi$-stability is closely related to the $\Phi$-entropy and
the $\Phi$-mutual information. Define the \emph{$\Phi$-entropy}
of $f(\mathbf{Y})$ as 
\[
\Ent_{\Phi}(f(\mathbf{Y})):=\mathbb{E}[\Phi(f(\mathbf{Y}))]-\Phi(\mathbb{E}f(\mathbf{Y})),
\]
the \emph{conditional $\Phi$-entropy} of $f(\mathbf{Y})$ given $\mathbf{X}$
as 
\begin{align*}
\Ent_{\Phi}(f(\mathbf{Y})|\mathbf{X}) & :=\mathbb{E}[\Phi(f(\mathbf{Y}))]-\mathbb{E}_{\mathbf{X}}\Phi(\mathbb{E}[f(\mathbf{Y})|\mathbf{X}])\\
 & =\mathbb{E}[\Phi(f(\mathbf{Y}))]-\mathbf{Stab}_{\Phi}[f],
\end{align*}
and the \emph{$\Phi$-mutual information} from $f(\mathbf{Y})$ to
$\mathbf{X}$ as 
\begin{align}
I_{\Phi}(f(\mathbf{Y});\mathbf{X}) & :=\Ent_{\Phi}(f(\mathbf{Y}))-\Ent_{\Phi}(f(\mathbf{Y})|\mathbf{X})\nonumber \\
 & =\mathbf{Stab}_{\Phi}[f]-\Phi(\mathbb{E}f(\mathbf{Y}))=\Ent_{\Phi}(T_{\rho}f(\mathbf{X})).\label{eq:-72}
\end{align}
Hence, given the expectation of $f$, $\mathbf{Stab}_{\Phi}[f]$ is
equal to the $\Phi$-mutual information from $f(\mathbf{Y})$ to $\mathbf{X}$,
up to a term which only depends on the expectation of $f$. In addition,
observe that given the expectation of $f$, when there is no noise
corruption (i.e., $\rho=1$), $\mathbf{Stab}_{\Phi}[f]=\mathbb{E}[\Phi(f(\mathbf{X}))]$
which is also fixed. Hence, $\mathbf{Stab}_{\Phi}[f]$ also quantifies
the change of the $\Phi$-entropy of $f$ after taking the noise operator.

In fact, the $\Phi$-entropy is more related to the relative entropy
than the Shannon entropy. Any nonnegative function $g$ such that
$\mathbb{E}_{\mathbf{Z}\sim P}[g(\mathbf{Z})]=1$ can be seen as the
Radon--Nikodym derivative of a probability distribution $Q$ w.r.t.
another probability distribution $P$, $\Ent_{\Phi}(g)$ corresponds
to a generalized relative entropy (called the ``$f$-divergence'')
from $Q$ to $P$ \cite{csiszar1964informationstheoretische,csiszar1967information,ali1966general}.

The noise stability problem, in a general sense, concerns which Boolean
functions (or measurable sets) are the ``most stable'' under the
action of the noise operator. In terms of $\Phi$-stability, the noise
stability problem is formulated as follows. 
\begin{defn}
The\emph{ maximal $\Phi$-stability} w.r.t. volume $a\in[0,1]$ is
defined as 
\begin{align*}
\mathbf{MaxStab}_{\Phi}(a) & =\max_{f:\{-1,1\}^{n}\to\{0,1\},\mathbb{E}f=a}\mathbf{Stab}_{\Phi}[f].
\end{align*}
\end{defn}

By the relation in \eqref{eq:-72}, determining the maximal $\Phi$-stability
w.r.t. volume $a\in[0,1]$ is equivalent to determining the maximum
$I_{\Phi}(f(\mathbf{Y});\mathbf{X})$ over all Boolean functions with
$\mathbb{E}f=a$.

We next consider two common instances of $\Phi$. For $\alpha\ge1$,
define $\Phi_{\alpha},\Phi_{\alpha}^{\mathrm{sym}}:[0,1]\to\mathbb{R}$
as 
\begin{align*}
\Phi_{\alpha}(t) & :=t\ln_{\alpha}(t)\quad\textrm{ and \quad}\Phi_{\alpha}^{\mathrm{sym}}(t):=t\ln_{\alpha}(t)+(1-t)\ln_{\alpha}(1-t).
\end{align*}
where $0\ln_{\alpha}(0):=0$, and the function $\ln_{\alpha}:(0,+\infty)\to\mathbb{R}$
for $\alpha\in\mathbb{R}$ is defined as 
\[
\ln_{\alpha}(t):=\begin{cases}
\frac{t^{\alpha-1}-1}{\alpha-1}, & \alpha\neq1\\
\ln(t), & \alpha=1
\end{cases}
\]
and is known as the \emph{$\alpha$-logarithm} (or $q$\emph{-logarithm})
introduced by Tsallis \cite{tsallis1994numbers}, but with a slight
reparameterization. Note that $\Phi_{\alpha}(t)$ and $\Phi_{\alpha}^{\mathrm{sym}}(t)$
are non-decreasing in $\alpha$ given $t\in[0,1]$ (since so is $\ln_{\alpha}(t)$),
and $\Phi_{1}$ and $\Phi_{1}^{\mathrm{sym}}$ (with $\alpha=1$)
is the continuous extension of the ones with $\alpha>1$. For brevity,
we term the maximal $\Phi_{\alpha}$-stability and the maximal $\Phi_{\alpha}^{\mathrm{sym}}$-stability
respectively as the \emph{maximal asymmetric and symmetric $\alpha$-stabilities}\footnote{The concept ``$\alpha$-stability'' for real $\alpha$ was introduced
in \cite{eldan2015two} for Gaussian distributions and in \cite{li2019boolean}
for binary distributions. The special case with $\alpha$ being a
positive integer was introduced in \cite{mossel2005coin}. More precisely,
the ``$\alpha$-stability'' defined there refers to the $\Phi$-stability
with $\Phi:t\mapsto t^{\alpha}$ for the asymmetric version and $\Phi:t\mapsto t^{\alpha}+(1-t)^{\alpha}$
for the symmetric version. Hence, our definition is a linear transform
version of theirs. We abuse the term ``$\alpha$-stability'' here,
since when the mean $a$ is given, computing the maximal $\Phi$-stabilities
defined in \cite{li2019boolean} is equivalent to computing the maximal
$\Phi$-stabilities defined here. }, denoted as $\mathbf{MaxStab}_{\alpha}$ and $\mathbf{MaxStab}_{\alpha}^{\mathrm{sym}}$.
Similarly, the $\Phi_{\alpha}$-entropy and $\Phi_{\alpha}^{\mathrm{sym}}$-entropy
are respectively termed as the\emph{ asymmetric and symmetric $\alpha$-entropies},
which are respectively denoted as $\Ent_{\alpha}$ and $\Ent_{\alpha}^{\mathrm{sym}}$.
Similar conventions also apply to the $\Phi_{\alpha}$-mutual information
and $\Phi_{\alpha}^{\mathrm{sym}}$-mutual information, denoted as
$I_{\alpha}$ and $I_{\alpha}^{\mathrm{sym}}$. For Boolean $f$ and
for $\alpha=1$, by definition, $\Ent_{1}^{\mathrm{sym}}(f)$, $\Ent_{1}^{\mathrm{sym}}(f(\mathbf{Y})|\mathbf{X})$,
and $I_{1}^{\mathrm{sym}}(f(\mathbf{Y});\mathbf{X})$ respectively
reduce to the \emph{Shannon entropy} of $f(\mathbf{Y})$, the \emph{conditional
Shannon entropy} of $f(\mathbf{Y})$ given $\mathbf{X}$, and the
\emph{Shannon mutual information} (denoted as $I(f(\mathbf{Y});\mathbf{X})$)
between $f(\mathbf{Y})$ and $\mathbf{X}$. Furthermore, given $f$,
$\Ent_{\alpha}(f)$ and $\Ent_{\alpha}^{\mathrm{sym}}(f)$ are nondecreasing
in $\alpha$; see \cite[Theorem 33]{sason2016f}. Moreover, it is
easily seen that if \emph{dictator functions} maximize the asymmetric
$\alpha$-stability over all balanced Boolean functions, then they
also maximize the symmetric $\alpha$-stability. Here, dictator functions
refer to the functions $f_{\mathrm{d}}:=1\{x_{k}=1\}$ or $1\{x_{k}=-1\}$
for some $1\le k\le n$.

The study of the noise stability problem, or more precisely, a two-function
(or two-set) version of the noise stability problem called the \emph{non-interactive
correlation distillation (NICD) problem}, dates back to Gács and Körner's
and Witsenhausen's seminal papers \cite{gacs1973common,witsenhausen1975sequences}.
By utilizing the tensorization property of the maximal correlation,
Witsenhausen \cite{witsenhausen1975sequences} showed that for $\alpha=2$,
the asymmetric and symmetric $2$-stability w.r.t. $a=1/2$ are attained
by dictator functions. The symmetric $\alpha$-stability problem with
$\alpha\in\{3,4,5,...\}$ was studied by Mossel and O'Donnell \cite{mossel2005coin},
but only the case $\alpha=3$ was solved by them. For $\alpha=3$,
the maximal symmetric $\alpha$-stability w.r.t. $a=1/2$ is attained
by dictator functions, and moreover, Mossel and O'Donnell observed
that this is not true for $\alpha=10$. Mossel and O'Donnell conjectured\footnote{In fact, this is a stronger version of the original conjecture posed
by Mossel and O'Donnell. In their original version, the Boolean functions
are additionally restricted to be \emph{antisymmetric} (or \emph{odd}).} that dictator functions maximize the symmetric $\alpha$-stability
over all balanced Boolean functions for all $\alpha\in\{4,5,...,9\}$.
Since for $\alpha=1$, the symmetric $1$-mutual information is nothing
but the Shannon mutual information, the maximal $1$-stability problem
for this case can be interpreted as the problem of maximizing the
Shannon mutual information $I(f(\mathbf{Y});\mathbf{X})$ over all
Boolean functions $f$ with a given mean. In fact, this case was already
studied by Courtade and Kumar \cite{kumar2013boolean,courtade2014boolean},
but this problem still remains widely open, except for the extreme
cases. Courtade and Kumar conjectured\footnote{In fact, this is a weaker version of the original conjecture posed
by Courtade and Kumar. In their original version, the Boolean functions
are not restricted to be balanced.} that for the mean $a=1/2$, dictator functions maximize the symmetric
$1$-stability over all balanced Boolean functions. This conjecture
attracts lots of interest from different fileds \cite{anantharam2013on,kindler2015remarks,ordentlich2016improved,samorodnitsky2016entropy,pichler2018dictator,li2019boolean},
and it is regarded as one of the most fundamental conjectures at the
interface of information theory and the analysis of Boolean functions.
Along these lines, Li and Médard \cite{li2019boolean} conjectured
that for $\alpha\in(1,2)$, the maximal asymmetric $\alpha$-stability
is still attained by dictator functions. Here we unify and slightly
generalize the Mossel--O'Donnell conjecture, Courtade--Kumar conjecture,
and Li--Médard conjecture in the following two conjectures. 
\begin{conjecture}[Maximal Asymmetric $\alpha$-Stability Conjecture]
\label{conj:AsymmetricStability} For $0\le\rho\le1$ and $\alpha\in[1,9]$,
$\mathbf{MaxStab}_{\alpha}(\frac{1}{2})$ is attained by dictator
functions. 
\end{conjecture}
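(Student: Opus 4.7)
My plan is to attack this via discrete Fourier analysis on $\{-1,1\}^n$ combined with a sharpened Friedgut--Kalai--Naor (FKN)--type estimate, splitting on the level-$1$ Fourier weight of $f$. Writing $f=\sum_S \hat f(S)\chi_S$ with $\chi_S(\mathbf x)=\prod_{i\in S}x_i$, the condition $\mathbb{E} f = 1/2$ gives $\hat f(\emptyset)=1/2$, and Parseval together with $f^2=f$ yields $\sum_{|S|\ge 1}\hat f(S)^2 = 1/4$. Let $g:=T_\rho f - 1/2 = \sum_{|S|\ge 1}\rho^{|S|}\hat f(S)\chi_S$ be the centred noised function. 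For a dictator $f_{\mathrm d}(\mathbf x)=\mathbf 1\{x_j=1\}$, a direct computation gives
\[ \mathbf{Stab}_\alpha[f_{\mathrm d}] = \tfrac12\Phi_\alpha\bigl(\tfrac{1+\rho}{2}\bigr) + \tfrac12\Phi_\alpha\bigl(\tfrac{1-\rho}{2}\bigr), \]
which is the benchmark that needs to be matched from above.

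Next I would Taylor-expand $\Phi_\alpha$ about $1/2$ and take expectations, yielding
\[ \mathbf{Stab}_\alpha[f] = \Phi_\alpha(1/2) + \sum_{k\ge 2}\frac{\Phi_\alpha^{(k)}(1/2)}{k!}\,\mathbb{E}[g^k]. \]
For integer $\alpha$ the series terminates at degree $\alpha$, and for non-integer $\alpha$ smoothness of $\Phi_\alpha$ on $(0,1)$ justifies the expansion since $T_\rho f\in[0,1]$. Each moment $\mathbb{E}[g^k]$ expands as a weighted sum over $k$-tuples $(S_1,\dots,S_k)$ of Fourier sets whose symmetric difference is empty, weighted by $\rho^{|S_1|+\cdots+|S_k|}$; the dictator concentrates all non-trivial mass on a single level-$1$ character, so the goal is to show this configuration maximises the resulting combinatorial sum for $\alpha\in[1,9]$.

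I would then introduce a threshold $\eta>0$ and split on $W_1:=\sum_{|S|=1}\hat f(S)^2\in[0,1/4]$. In the \emph{dictator regime} $W_1\ge 1/4-\eta$, I would invoke a quantitative FKN refinement: a Boolean $f$ with $\sum_{|S|\ge 2}\hat f(S)^2\le\eta$ must agree with some dictator on all but a fraction $c(\eta)$ of the cube, and a local Lipschitz estimate for $\mathbf{Stab}_\alpha$ near dictators then upgrades this to $\mathbf{Stab}_\alpha[f]\le \mathbf{Stab}_\alpha[f_{\mathrm d}]$, with equality only at dictators. In the \emph{spread regime} $W_1<1/4-\eta$, Fourier mass lies mostly at levels $\ge 2$, each damped by $\rho^{|S|}$; combining hypercontractivity (Bonami--Beckner) with the moment expansion should bound $\mathbf{Stab}_\alpha[f]$ uniformly below the dictator value. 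The two estimates must meet at the threshold, which constrains $\eta$ and the admissible range of $\alpha$.

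The hard part will be the sharpened FKN estimate in the dictator regime. The classical FKN theorem gives only qualitative closeness to a dictator when $W_1$ is near $1/4$, with constants insufficient to match the benchmark; what is needed is a tight quantitative form that controls the Hamming distance from $f$ to some dictator by the correct leading order in $1/4 - W_1$, strong enough that the second-order correction stays below the gap established in the spread regime. I also expect the restriction $\alpha\le 9$ to enter essentially here: since Mossel and O'Donnell observed a counterexample at $\alpha=10$, the constants in the FKN bound, or in the comparison of competing moment-sum configurations, must degrade precisely as $\alpha$ crosses this threshold. The abstract's phrase \emph{sharp or asymptotically sharp for certain cases} suggests that the FKN improvement attains tightness for some sub-range (delivering the full conjecture there), while for the remaining $\alpha$ one obtains only partial resolution.
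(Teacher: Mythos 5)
There is a genuine gap: what you have written is a program, not a proof, and the two steps on which everything hinges are precisely the ones left unestablished. Note first that the statement you are addressing is stated in the paper as a \emph{conjecture}; the paper itself does not prove it and only partially resolves it (dictators are shown optimal for the asymmetric $\alpha$-stability for $\alpha\in[2,3]$ and all $\rho$, and for $\alpha\in(1,2)$ only when $\rho\le\frac{1-\theta(\alpha)}{1+\theta(\alpha)}$, via Theorem \ref{thm:derivativeconcave} and Corollary \ref{cor:asymmetricpower}). Your ``dictator regime'' step requires a quantitative FKN refinement tight enough that the second-order cost of deviating from a dictator is controlled at the correct leading order in $\frac14-\mathbf{W}_1$; no such bound is proved in your proposal, and the improvements of FKN actually available (Lemma \ref{lem:W-W}, Propositions \ref{prop:ImprovedFKN} and the Khintchine-based bound) are not of that strength -- incorporated into the stability bound they only push the symmetric $\alpha=1$ threshold to about $\rho\approx 0.83$ numerically, far short of all $\rho\in[0,1]$.

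The ``spread regime'' step fails as stated for $\rho$ near $1$: as $\rho\to1$, $\mathbf{Stab}_{\Phi}[f]\to\frac12\Phi(0)+\frac12\Phi(1)$ for \emph{every} balanced Boolean $f$, so there cannot be a uniform-in-$\rho$ gap below the dictator value when $\mathbf{W}_1<\frac14-\eta$; any valid argument must produce a gap that degenerates at the precise rate of the dictator's advantage in $1-\rho$, which hypercontractive damping of high levels alone does not give. The moment-expansion comparison has an additional sign problem: for non-integer $\alpha$ the derivatives $\Phi_{\alpha}^{(k)}(\tfrac12)$ are not all of one sign and the odd moments $\mathbb{E}[g^{k}]$ need not be nonnegative, so ``the dictator maximizes each term'' is not a legitimate term-by-term reduction. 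Finally, nothing in your plan identifies where the restriction $\alpha\le 9$ would actually be used; since Mossel and O'Donnell's $\alpha=10$ counterexample shows some quantitative step must break exactly there, an argument that never locates this breakdown cannot be complete. In short, the conjecture remains open, and your outline reproduces the known obstacles (sharp FKN-type control near dictators, and the degenerate regime $\rho\to1$) rather than overcoming them.
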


\begin{conjecture}[Maximal Symmetric $\alpha$-Stability Conjecture]
\label{conj:SymmetricStability} For $0\le\rho\le1$ and $\alpha\in[1,9]$,
$\mathbf{MaxStab}_{\alpha}^{\mathrm{sym}}(\frac{1}{2})$ is attained
by dictator functions. 
\end{conjecture}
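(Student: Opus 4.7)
The excerpt already records that if dictator functions maximize the asymmetric $\alpha$-stability over all balanced Boolean functions, then they also maximize the symmetric version; so the plan is to establish Conjecture~\ref{conj:AsymmetricStability} at $a=1/2$ and quote this implication to get Conjecture~\ref{conj:SymmetricStability}. I would work in the Walsh--Fourier basis $\chi_{S}(x)=\prod_{i\in S}x_{i}$. Writing $f=\sum_{S}\hat{f}(S)\chi_{S}$ so that $T_{\rho}f=\sum_{S}\rho^{|S|}\hat{f}(S)\chi_{S}$, a balanced Boolean $f$ has $\hat{f}(\emptyset)=1/2$, while a dictator concentrates all of its remaining $L^{2}$ mass ($=1/4$) on a single level-$1$ coefficient.

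\textbf{FKN dichotomy.} The core mechanism is a dichotomy based on the level-$\ge 2$ Fourier weight $w(f):=\sum_{|S|\ge 2}\hat{f}(S)^{2}$. If $w(f)$ is small, then by a \emph{sharpened} Friedgut--Kalai--Naor (FKN) theorem, $f$ is close in $L^{2}$ to a dictator, anti-dictator, or constant; in this regime I would perform a Taylor expansion of $\Phi_{\alpha}$ about the two values $\tfrac{1\pm\rho}{2}$ taken by $T_{\rho}$ applied to a dictator, to show that the dictator is a strict local maximum of $\mathbf{Stab}_{\Phi_{\alpha}}[\cdot]$ among balanced Boolean functions, with a quantitative deficit proportional to $\mathrm{dist}(f,\mathcal{D})^{2}$. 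If $w(f)$ is large, I would control $\mathbf{Stab}_{\Phi_{\alpha}}[f]$ from above by a level-weighted Fourier inequality of the shape
\begin{align*}
\mathbf{Stab}_{\Phi_{\alpha}}[f] \;\le\; \Phi_{\alpha}(\tfrac{1}{2}) \;+\; C(\alpha)\,\sum_{i}\hat{f}(\{i\})^{2} \;+\; \sum_{|S|\ge 2}\rho^{2|S|}\,W_{S}(\alpha)\,\hat{f}(S)^{2},
\end{align*}
derived from convexity of $\Phi_{\alpha}$ combined with Bonami--Beckner hypercontractivity. Matching these two estimates at a threshold of $w(f)$ calibrated to the FKN constant closes the optimization.

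\textbf{Main obstacle.} The tight $\alpha$-dependence is where I expect the plan to strain. At the lower end $\alpha=1$, $\Phi_{1}(t)=t\ln t$ has unbounded curvature at $t=0$, so any Taylor analysis near a dictator must deal with arguments of $\Phi_{\alpha}$ approaching $0$ whenever $\rho$ is close to $1$; this is precisely the Courtade--Kumar regime, which has famously resisted direct attack for two decades. At the upper end, the Mossel--O'Donnell observation that dictators fail to be maximizers at $\alpha=10$ means the far-from-dictator bound \emph{must} be tight up to $\alpha=9$ and slacken exactly past $\alpha=9$; so the coefficients $W_{S}(\alpha)$ above have to be engineered so that the level-$\ge 2$ inequality is saturated only by dictators precisely for $\alpha\in[1,9]$. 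Producing such level-adapted Fourier inequalities, or equivalently identifying the direction in function space along which dictators first cease to be optimal at $\alpha=9$, is the step I anticipate to be the principal obstacle; it is also the point at which I would expect to be forced to retreat to a partial resolution, for instance only a subinterval of $[1,9]$ or only the extremes $\rho\to 0^{+}$ and $\rho\to 1^{-}$.
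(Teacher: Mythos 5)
The statement you are attempting is a conjecture in the paper, not a theorem: the paper itself does not prove it, and only partial results are obtained (dictators are shown optimal for the symmetric $\alpha$-stability for every $\rho$ when $\alpha\in[2,5]$, and for $\alpha\in[1,2)$ only when $0\le\rho\le\rho^{*}\approx0.4615$; see Corollary \ref{cor:symmetricpower} and Corollary \ref{cor:CK_Conjecture} --- the range $\alpha\in(5,9]$ and the large-$\rho$ regime for $\alpha\in[1,2)$ remain open). Your proposal is a strategy sketch whose two load-bearing ingredients are exactly what is missing: (i) the ``sharpened FKN plus Taylor expansion'' step would require a quantitative statement that dictators are strict local maximizers of $\mathbf{Stab}_{\Phi_{\alpha}^{\mathrm{sym}}}$ among balanced Boolean functions with deficit proportional to the squared distance to the dictator family, \emph{uniformly} in $\rho$ up to $1$; at $\alpha=1$ the curvature of $\Phi_{1}$ blows up at $0$ and no such uniform local analysis is known --- this is precisely the unresolved core of the Courtade--Kumar problem, as you yourself note; (ii) the level-weighted inequality with coefficients $W_{S}(\alpha)$ ``engineered'' to be tight exactly through $\alpha=9$ and to fail at $\alpha=10$ is never derived, and producing it is essentially equivalent to the conjecture itself. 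Your closing paragraph concedes you expect to retreat to a subinterval of $[1,9]$ or to limiting values of $\rho$; that self-assessment is accurate, but it means the proposal does not prove the statement. A smaller structural issue: reducing to the asymmetric Conjecture \ref{conj:AsymmetricStability} is valid but lossy, since that conjecture is strictly stronger (the paper verifies it only for $\alpha\in[2,3]$, versus $[2,5]$ for the symmetric version), so even a successful execution of your plan could miss part of $[1,9]$ on which the symmetric statement holds.

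For comparison, the paper's partial progress uses a different mechanism from your dichotomy: it relaxes the problem, via the Fourier expansion, to a low-dimensional optimization over the pair $(S,Z)$ with $S=f-a$ and $Z=\sum_{k\ge1}\rho^{k-1}f_{k}$, subject to $\mathbb{E}Z=0$ and $\mathbb{E}[Z^{2}]\le\mathbb{E}[SZ]$ (Theorem \ref{thm:generalbound}), and then solves that relaxation exactly via KKT/LICQ arguments when $\Phi'$ is concave or convex on $(0,\tfrac12]$ (Theorem \ref{thm:symmetric}), which yields the $\alpha\in[2,5]$ result and the explicit threshold $\rho^{*}$ for $\alpha\in[1,2)$; quantitative FKN-type improvements enter only afterwards (Theorem \ref{thm:symmetric-1}) and, even then, only support a numerical threshold near $0.83$ rather than all $\rho$. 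Your FKN dichotomy is closer in spirit to Samorodnitsky's small-$\rho$ argument than to the paper's relaxation, and neither route is currently known to cover all of $\alpha\in[1,9]$ and $\rho\in[0,1]$.
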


Obviously, Conjecture \ref{conj:AsymmetricStability} implies Conjecture
\ref{conj:SymmetricStability}. Furthermore, as mentioned above, Conjectures
\ref{conj:AsymmetricStability} and \ref{conj:SymmetricStability}
with $\alpha=2$ were proven by Witsenhausen \cite{witsenhausen1975sequences};
Conjecture \ref{conj:SymmetricStability} with $\alpha=3$ was proven
by Mossel and O'Donnell \cite{mossel2005coin}; the Mossel--O'Donnell
conjecture corresponds to Conjecture \ref{conj:SymmetricStability}
with $\alpha\in\{4,5,...,9\}$; the Courtade--Kumar conjecture corresponds
to Conjecture \ref{conj:SymmetricStability} with $\alpha=1$; and
the Li--Médard conjecture corresponds to Conjecture \ref{conj:AsymmetricStability}
with $\alpha\in(1,2)$. Conjecture \ref{conj:AsymmetricStability}
is open for $\alpha\in[1,2)\cup(2,9]$, and Conjecture \ref{conj:SymmetricStability}
is open for $\alpha\in[1,2)\cup(2,3)\cup(3,9]$.

Recently, Barnes and Özgür \cite{barnes2020courtade} showed that
Conjecture \ref{conj:AsymmetricStability} with $\alpha=1$ and the
same one but with $\alpha\in(1,2)$ are equivalent, and Conjecture
\ref{conj:SymmetricStability} with $\alpha=1$ and the same one but
with $\alpha\in(1,2)$ are also equivalent. That is, the asymmetric
(resp. symmetric) version of Courtade--Kumar conjecture and the asymmetric
(resp. symmetric) version of Li--Médard conjecture are equivalent.
Following Barnes and Özgür's proofs in \cite[Subsections III.C and III.D]{barnes2020courtade},
one can obtain the following lemma, which slightly generalizes the
``only if'' parts of Barnes and Özgür's Theorems 1 and 2 in \cite{barnes2020courtade}. 
\begin{lem}
\label{lem:BarnesOzgur} For $a=1/2$, there are two thresholds $\alpha_{\min}$
and $\alpha_{\max}$ satisfying $1\le\alpha_{\min}\le2\le\alpha_{\max}$
such that dictator functions are optimal in attaining the asymmetric
max $\alpha$-stability with $\alpha\ge1$ if and only if $\alpha\in[\alpha_{\min},\alpha_{\max}]$.
This statement also holds for the symmetric max $\alpha$-stability
but with possibly different thresholds $\breve{\alpha}_{\min}$ and
$\breve{\alpha}_{\max}$ satisfying the same condition $1\le\breve{\alpha}_{\min}\le2\le\breve{\alpha}_{\max}$. 
\end{lem}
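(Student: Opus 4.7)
The claim is that $A := \{\alpha \geq 1 : \mathbf{MaxStab}_\alpha(\tfrac12) = \mathbf{Stab}_\alpha[f_{\mathrm{d}}]\}$ (and similarly $\breve{A}$ in the symmetric case) is a closed interval containing $2$. My plan has three steps: closedness, $2 \in A$, and the interval property.

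Since only finitely many Boolean $f$ satisfy $\mathbb{E}f = 1/2$, $\mathbf{MaxStab}_\alpha(\tfrac12)$ is a pointwise maximum of finitely many continuous functions of $\alpha$---continuity of $\alpha \mapsto \mathbf{Stab}_\alpha[f]$ follows from the real-analyticity of $\Phi_\alpha(t)$ for $\alpha > 1$ together with its continuous extension at $\alpha = 1$. Hence $A$ is the zero-set of the continuous gap $\mathbf{MaxStab}_\alpha(\tfrac12) - \mathbf{Stab}_\alpha[f_{\mathrm{d}}]$ and is therefore closed in $[1, \infty)$. That $2 \in A$ is Witsenhausen's theorem \cite{witsenhausen1975sequences}.

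For the interval property---the main content---I would follow Barnes and {\"O}zg{\"u}r (Subsections III.C, III.D of \cite{barnes2020courtade}), who established this property within $[1, 2]$ via an integral representation of $\Phi_\alpha$ that reduces the R\'enyi-type comparison to a comparison of Laplace transforms. Specifically, for $\alpha \in (1, 2)$ one has the L\'evy--Khintchine-type formula
\[
y^\alpha = \frac{\alpha(\alpha-1)}{\Gamma(2-\alpha)} \int_0^\infty \bigl(e^{-sy} - 1 + sy\bigr)\, s^{-1-\alpha}\, ds,
\]
and combining this with the mean constraint $\mathbb{E}[T_\rho f_{\mathrm{d}}] = \mathbb{E}[T_\rho f] = 1/2$ expresses the gap $\mathbf{Stab}_\alpha[f_{\mathrm{d}}] - \mathbf{Stab}_\alpha[f]$ as a positive multiple of $\int_0^\infty [M_{f_{\mathrm{d}}}(s) - M_f(s)]\, s^{-1-\alpha}\, ds$, where $M_g(s) := \mathbb{E}[e^{-s T_\rho g}]$. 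An analogous representation, with extra Taylor subtractions at $s = 0$ for integrability, handles $\alpha > 2$. In each of the two regimes $[1, 2]$ and $[2, \infty)$, the known $\alpha = 2$ inequality (Witsenhausen) anchors the $\alpha$-monotonicity analysis of these integrals, yielding that the set of good $\alpha$ is a closed sub-interval abutting $2$; gluing gives $A = [\alpha_{\min}, \alpha_{\max}]$. The symmetric case is parallel, via $\Phi^{\mathrm{sym}}_\alpha(y) = \Phi_\alpha(y) + \Phi_\alpha(1-y)$, producing the analogous thresholds $\breve{\alpha}_{\min}, \breve{\alpha}_{\max}$.

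The hardest part will be this third step: for $\alpha \in [1, 2]$ the argument is essentially that of \cite{barnes2020courtade}, but extending it cleanly to $\alpha > 2$ requires choosing the correct number of Taylor subtractions in the integral representation (to preserve integrability and positivity of the weighting kernel) and redoing the sign analysis of $M_{f_{\mathrm{d}}}(s) - M_f(s)$ against the modified kernel.
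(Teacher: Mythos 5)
Your overall route --- closedness, Witsenhausen's theorem \cite{witsenhausen1975sequences} giving $2\in A$, and ``follow Barnes and \"{O}zg\"{u}r, Subsections III.C--III.D'' for the interval property --- is nominally the same as the paper's, which proves this lemma only by asserting that it follows from those subsections of \cite{barnes2020courtade}. But judged as an argument, your plan has a gap located exactly at the lemma's content. The representation $y^{\alpha}=\frac{\alpha(\alpha-1)}{\Gamma(2-\alpha)}\int_{0}^{\infty}(e^{-sy}-1+sy)s^{-1-\alpha}\,ds$ does correctly turn the stability gap, for $1<\alpha<2$ and fixed mean $1/2$, into a positive multiple of $\int_{0}^{\infty}[M_{f_{\mathrm{d}}}(s)-M_{f}(s)]s^{-1-\alpha}\,ds$; what is missing is any reason why nonnegativity of this weighted integral at some $\alpha_{0}$ (for all balanced $f$) should propagate to every $\alpha$ between $\alpha_{0}$ and $2$. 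The natural mechanism --- a single sign change of $s\mapsto M_{f_{\mathrm{d}}}(s)-M_{f}(s)$ from $+$ to $-$, after which the monotone reweighting $s^{\alpha_{0}-\alpha}$ yields the transfer --- is not available: this difference is a difference of exponential sums whose ordered coefficient signs can alternate several times (e.g.\ when $T_{\rho}f$ puts mass both below $\frac{1-\rho}{2}$ and strictly between $\frac{1-\rho}{2}$ and $\frac{1+\rho}{2}$), so it may cross zero more than once; Witsenhausen's inequality only pins down its behaviour near $s=0$. So ``the Witsenhausen inequality anchors the $\alpha$-monotonicity analysis'' is an assertion of precisely the step you would need to import from \cite{barnes2020courtade}, not a proof of it.

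The $\alpha>2$ half is the part the paper actually uses (to pass from $\alpha=5$ to $[2,5]$ and from $\alpha=3$ to $[2,3]$) and goes beyond the stated theorems of \cite{barnes2020courtade}, and here your ``analogous representation with extra Taylor subtractions'' does not behave analogously. For $2<\alpha<3$ the subtracted kernel $e^{-sy}-1+sy-\frac{(sy)^{2}}{2}$ is nonpositive and $\Gamma(2-\alpha)$ changes sign, and, more importantly, the quadratic terms no longer cancel between $f$ and $f_{\mathrm{d}}$: the representation of the gap acquires a term proportional to $\mathbb{E}[(T_{\rho}f_{\mathrm{d}})^{2}]-\mathbb{E}[(T_{\rho}f)^{2}]$, so the gap is no longer a positive multiple of a pure Laplace-transform comparison. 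For $\alpha\in(3,4)$ a further subtraction drags in the third-moment gap, whose sign is not controlled by anything at your disposal. Thus the ``sign analysis against the modified kernel'' that you defer is not a routine adaptation; it is the unresolved core of the plan, and since the transfer of optimality toward $2$ on the $\alpha>2$ side is the substance of the lemma, the proposal as written does not yet establish it.
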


Here are some consequences of this lemma. Firstly, in terms of these
thresholds, Conjectures \ref{conj:AsymmetricStability} and \ref{conj:SymmetricStability}
can be restated as that $\alpha_{\min}=\breve{\alpha}_{\min}=1$ and
$\alpha_{\max},\breve{\alpha}_{\max}\ge9$. Secondly, as mentioned
previously, it was shown by Mossel and O'Donnell \cite{mossel2005coin}
that $\max\{\alpha_{\max},\breve{\alpha}_{\max}\}<10$. Lastly, Since
Conjecture \ref{conj:SymmetricStability} holds for $\alpha=3$ \cite{mossel2005coin},
we actually have $\breve{\alpha}_{\max}\ge3$, i.e., Conjecture \ref{conj:SymmetricStability}
holds for all $\alpha\in[2,3]$. In other words, Conjecture~\ref{conj:SymmetricStability}
is only open for $\alpha\in[1,2)\cup(3,9]$. Combining all these points
yields that $2\le\alpha_{\max}<10$ and $3\le\breve{\alpha}_{\max}<10$.
If Conjectures~\ref{conj:AsymmetricStability} and~\ref{conj:SymmetricStability}
are true, then the estimates of $\alpha_{\max}$ and $\breve{\alpha}_{\max}$
can be improved to $9\le\alpha_{\max},\breve{\alpha}_{\max}<10$.

\subsection{Our Contributions}

Our main contributions in this paper are: 
\begin{thm}[Formally Stated in Corollary \ref{cor:symmetricpower}]
\label{thm:symmetricpower}For $(\rho,\alpha)$ such that 
\[
0\le\rho\leq\begin{cases}
\rho^{*} & \alpha\in[1,2)\\
1 & \alpha\in[2,5]
\end{cases},
\]
$\mathbf{MaxStab}_{\alpha}^{\mathrm{sym}}(\frac{1}{2})$ is attained
by dictator functions, where $\rho^{*}\approx0.461491$ (the solution
to \eqref{eq:psi}). 
\end{thm}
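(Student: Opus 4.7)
My plan is to recast $\mathbf{Stab}_{\Phi_{\alpha}^{\mathrm{sym}}}[f]$ in Fourier-analytic form and compare it moment-by-moment against its value at a dictator, handling the two parameter regimes by different tools. Writing $f=\tfrac{1}{2}+g$ with $g$ centered and $\{-\tfrac{1}{2},\tfrac{1}{2}\}$-valued, expanding $g=\sum_{S\neq\emptyset}\hat{g}(S)\chi_{S}$, and using $T_{\rho}\chi_{S}=\rho^{|S|}\chi_{S}$, Parseval combined with the Boolean constraint gives $\sum_{d\ge1}W_{d}(f)=\tfrac{1}{4}$, where $W_{d}(f):=\sum_{|S|=d}\hat{g}(S)^{2}$. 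Since $\Phi_{\alpha}^{\mathrm{sym}}$ is even about $\tfrac{1}{2}$, Taylor-expand
\begin{equation*}
\mathbf{Stab}_{\Phi_{\alpha}^{\mathrm{sym}}}[f]-\Phi_{\alpha}^{\mathrm{sym}}(\tfrac{1}{2})=\sum_{k\ge1}c_{\alpha,2k}\,\mathbb{E}[(T_{\rho}g)^{2k}].
\end{equation*}
At a dictator, $T_{\rho}g$ is a symmetric $\pm\rho/2$-valued variable, so each moment collapses to $(\rho/2)^{2k}$ and the problem reduces to showing $\sum_{k\ge1}c_{\alpha,2k}[\mathbb{E}(T_{\rho}g)^{2k}-(\rho/2)^{2k}]\le0$ in each regime.

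\textbf{Regime $\alpha\in[2,5]$.} The endpoints $\alpha=2,3$ follow from Witsenhausen and Mossel--O'Donnell, so the new work is $\alpha\in(2,3)\cup(3,5]$. The second moment $\mathbb{E}[(T_{\rho}g)^{2}]=\sum_{d\ge 1}\rho^{2d}W_{d}(f)\le\rho^{2}/4$ automatically favors dictators, but individual higher moments need not. I would try to absorb any excess in these higher moments into the slack $\tfrac{1}{4}-W_{1}(f)$ by (i) expanding $\mathbb{E}[(T_{\rho}g)^{4}]$ via $\mathbb{E}[\chi_{S_{1}}\chi_{S_{2}}\chi_{S_{3}}\chi_{S_{4}}]=\mathbf{1}\{S_{1}\triangle S_{2}\triangle S_{3}\triangle S_{4}=\emptyset\}$ and a Mossel--O'Donnell-style combinatorial identity, (ii) bounding the $k\ge3$ tail by a hypercontractive $(4,2)$-estimate, and (iii) exploiting the Booleanness condition $g^{2}\equiv\tfrac14$ to close the inequality. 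The upper limit $\alpha\le5$ should emerge as the precise threshold beyond which the signed linear combination of these bounds, weighted by the $c_{\alpha,2k}$, ceases to admit an SOS-type nonpositive decomposition; that this route cannot reach $\alpha=9$ is consistent with Mossel--O'Donnell's failure at $\alpha=10$.

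\textbf{Regime $\alpha\in[1,2)$ with $\rho\le\rho^{*}$.} By Lemma~\ref{lem:BarnesOzgur}, dictator optimality for $\alpha\in[1,2)$ is equivalent to the Shannon case $\alpha=1$, so it suffices to treat that case at low noise. I would dichotomize on $W_{1}(f)$. When $W_{1}(f)$ is bounded away from $\tfrac{1}{4}$, a quantitative second-moment gap combined with strict convexity of $\Phi_{1}^{\mathrm{sym}}$ and a crude higher-moment bound closes the comparison. When $W_{1}(f)$ is close to $\tfrac{1}{4}$, one of the paper's sharpened FKN theorems would force $f$ structurally close to a dictator, after which a local quadratic expansion of the $\Phi$-stability functional shows that for $\rho\le\rho^{*}$ every small perturbation of a dictator strictly decreases the stability. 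The transcendental equation for $\rho^{*}$ is precisely the balance point where these two sub-regimes meet.

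\textbf{Main obstacle.} The critical difficulty is the low-$\alpha$ regime: obtaining an FKN-type bound for $\sum_{d\ge 2}W_{d}(f)$ in terms of $\tfrac{1}{4}-W_{1}(f)$ sharp enough that the fourth- and sixth-order contributions to $\mathbb{E}[(T_{\rho}g)^{2k}]$, weighted by the positive Shannon Taylor coefficients $c_{1,2k}$, cannot overturn the second-moment gain. The absolute constant in the classical FKN theorem is too loose; the sharp prefactor one needs must match the curvature of $\Phi_{1}^{\mathrm{sym}}$ near $\tfrac12$, and the resulting optimal constant is exactly what determines the numerical value $\rho^{*}\approx0.461$. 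I would expect this FKN sharpening, rather than the moment combinatorics of the first regime, to be the technical heart of the proof.
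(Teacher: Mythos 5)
Your proposal is a program rather than a proof, and the two places where it would have to do real work are precisely the places left open. In the regime $\alpha\in[2,5]$ you correctly note that a moment-by-moment comparison fails (indeed, for the $3$-bit majority $\mathbb{E}[(T_{\rho}g)^{4}]\approx\tfrac{84}{1024}\rho^{4}$ already exceeds the dictator value $\rho^{4}/16$), but the proposed repair --- absorbing the excess of the higher moments into the slack $\tfrac14-W_{1}(f)$ via a Mossel--O'Donnell-type identity, a $(4,2)$-hypercontractive tail bound, and an ``SOS-type nonpositive decomposition'' whose breakdown is supposed to explain the threshold $\alpha\le5$ --- is nowhere derived, and for non-integer $\alpha\in(2,3)$ the Taylor coefficients $c_{\alpha,2k}$ even change sign (the fourth derivative of $t^{\alpha}+(1-t)^{\alpha}$ at $\tfrac12$ is negative there), so the signed combination you would need to control is more delicate than the sketch acknowledges. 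The paper avoids all of this: by Lemma~\ref{lem:BarnesOzgur} it suffices to treat $\alpha=5$, and for that single value $\Phi'$ is strictly concave on $(0,\tfrac12]$, so Statement~1 of Theorem~\ref{thm:symmetric} (itself obtained from the Fourier reduction of Theorem~\ref{thm:generalbound}, the constraint $\mathbb{E}[Z^{2}]\le\mathbb{E}[SZ]$, and a KKT/LICQ analysis pinning the optimizer to two atoms per value of $S$) collapses the problem to a one-variable monotonicity check of an explicit rational function $h(z)$ on $[\rho/2,\rho]$. You never exploit this reduction to $\alpha=5$, which is what makes the result provable at all.

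In the regime $\alpha\in[1,2)$ your reduction to $\alpha=1$ via Lemma~\ref{lem:BarnesOzgur} matches the paper in spirit, but the proposed dichotomy on $W_{1}(f)$ is not carried out, and its hard half is exactly the missing ingredient: FKN-type closeness is an $L^{1}$/measure statement about a \emph{discrete} object, so ``a local quadratic expansion of the $\Phi$-stability functional near a dictator'' has no direct meaning --- one needs a quantitative stability estimate with explicit constants, and you concede you do not have a sharp enough one. Moreover your guess about the provenance of $\rho^{*}$ is off: in the paper the threshold does not come from any FKN constant (the FKN improvements only enter the later, numerical improvement towards $0.83$); it comes from the same KKT reduction, which for $a=\tfrac12$ forces $z_{1}+z_{2}=0$ and leaves a one-dimensional problem $\max_{-1/2\le z_{1}\le -1/4}h(z_{1})$, whose monotonicity for $\rho\le\rho^{*}$ is governed by the explicit condition $\psi(\rho)\ge0$ in \eqref{eq:psi}. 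So as it stands the proposal has genuine gaps in both regimes and would not close without importing something like the paper's reduction to a low-dimensional optimization (or an equally concrete substitute for the unproven moment-absorption and dictator-stability steps).
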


\begin{thm}[Formally Stated in Corollary \ref{cor:asymmetricpower}]
\label{thm:asymmetricpower}For $(\rho,\alpha)$ such that 
\[
0\le\rho\leq\begin{cases}
\frac{1-\theta(\alpha)}{1+\theta(\alpha)} & \alpha\in(1,2)\\
1 & \alpha\in[2,3]
\end{cases},
\]
$\mathbf{MaxStab}_{\alpha}(\frac{1}{2})$ is attained by dictator
functions, where $\theta(\alpha)$ is some function (the solution
to \eqref{eq:theta-1}). 
\end{thm}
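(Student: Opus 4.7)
The first step is to reduce the problem to a moment computation. Using the identity $\Phi_\alpha(t) = (t^\alpha - t)/(\alpha - 1)$ (valid for $\alpha \ne 1$), one gets $\mathbf{Stab}_\alpha[f] = (\mathbb{E}[(T_\rho f)^\alpha] - a)/(\alpha - 1)$, so since $a = 1/2$ is fixed and $\alpha > 1$, the problem reduces to maximizing the $\alpha$-th moment $\mathbb{E}[(T_\rho f)^\alpha]$ over balanced Boolean $f$. For the dictator $f_{\mathrm{d}}(x) = (1 + x_k)/2$, one computes $T_\rho f_{\mathrm{d}}(x) = (1 + \rho x_k)/2$, which takes values $(1 \pm \rho)/2$ with equal probability, giving the target dictator value $\bigl((1-\rho)^\alpha + (1+\rho)^\alpha\bigr)/2^{\alpha+1}$.

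For $\alpha \in [2, 3]$, the plan is to exhibit a quadratic polynomial $q_\alpha$ on $[0, 1]$ majorizing $t^\alpha$ and agreeing with $t^\alpha$ at the dictator support $\{(1 \pm \rho)/2\}$. Taking expectations, the bound $\mathbb{E}[(T_\rho f)^\alpha] \le \mathbb{E}[q_\alpha(T_\rho f)]$ reduces the problem to controlling $\mathbb{E}[T_\rho f] = a$ and $\mathbb{E}[(T_\rho f)^2]$; the latter satisfies the Parseval estimate $\mathbb{E}[(T_\rho f)^2] = a^2 + \sum_{|S| \ge 1} \rho^{2|S|} \hat{f}(S)^2 \le a^2 + \rho^2(a - a^2)$, with equality iff the Fourier mass of $f$ sits at levels $0$ and $1$. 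Since the dictator realizes equality in both the Parseval bound and the majorization, the combined inequality is tight exactly at the dictator. A sign analysis of $t^\alpha - q_\alpha(t)$ on $[0, 1]$ is what confines the admissibility of this argument to $\alpha \in [2, 3]$; beyond $\alpha = 3$ an interior positive bump appears.

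For $\alpha \in (1, 2)$, the quadratic-majorant strategy breaks down because $t^\alpha$ is too flat near the endpoints, so I would pursue a Fourier-level dichotomy based on the paper's improved FKN theorem. Let $W_1 = \sum_{|S|=1} \hat{f}(S)^2$, which satisfies $W_1 \le 1/4$ for balanced Boolean $f$, and split into a near-dictator regime $W_1 \ge 1/4 - \eta$ and a spread-out regime $W_1 < 1/4 - \eta$. In the first regime, a quantitative FKN stability result pins $f$ to within $L^1$-distance $O(\eta)$ of a $\pm$dictator; a second-order Taylor expansion of $t \mapsto t^\alpha$ around the two dictator outputs $(1 \pm \rho)/2$ then transfers the dictator bound to $f$, provided $\rho$ is small enough that the second-order convexity dominates any first-order perturbation. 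In the second regime, a Fourier-level-decomposition estimate combined with a piecewise polynomial majorant of $t^\alpha$ gives an upper bound $\Psi(\rho, W_1)$ on $\mathbb{E}[(T_\rho f)^\alpha]$ that is monotone in $W_1$ and strictly below the dictator value on this regime. The threshold $\rho \le (1 - \theta(\alpha))/(1 + \theta(\alpha))$ arises by equating the two regime bounds at the optimal cutoff $\eta$; $\theta(\alpha)$ is then the solution of the matching transcendental equation referenced in the statement.

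The chief obstacle is the $\alpha \in (1, 2)$ case: the improved FKN theorem must be quantitatively sharp enough, and the spread-out bound tight enough, that the two regimes close up exactly at the claimed threshold rather than at a weaker constant. Moreover, since the derivatives of $\Phi_\alpha$ become singular at $t = 0$ as $\alpha \to 1$, the local Taylor analysis in the near-dictator regime requires a control of remainder terms that is uniform in $\alpha$ on $(1, 2)$. The delicate matching of the two regimes, encoded in the transcendental equation defining $\theta(\alpha)$, is what determines the precise shape of the admissible $(\rho, \alpha)$ region and is expected to be the technical centerpiece of the proof.
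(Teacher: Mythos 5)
There is a genuine gap, and it sits in the part you treat as the easy half. For $\alpha\in[2,3]$ your plan needs a quadratic $q_\alpha(t)=At^2+Bt+C$ with $q_\alpha\ge t^{\alpha}$ on $[0,1]$ and equality at both points $(1\pm\rho)/2$; no such quadratic exists for any $\alpha\in(2,3]$ (only the trivial case $\alpha=2$ survives). Indeed, set $d(t)=q_\alpha(t)-t^{\alpha}$. Since $(1\pm\rho)/2$ are interior points of $[0,1]$ and $d\ge0$ vanishes there, both are local minima, so $d'$ vanishes at both, and $d$ must also have an interior local maximum strictly between them; hence $d'$ has at least three zeros in $(0,1)$. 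But $d''(t)=2A-\alpha(\alpha-1)t^{\alpha-2}$ is strictly monotone in $t$ for $\alpha>2$, so $d'$ has at most two zeros --- a contradiction. Equivalently, writing $T_\rho f=\tfrac12+g$, the term $\mathbb{E}[g^3]$ (and its fractional analogues) cannot be controlled by $\mathbb{E}[g^2]$ alone, which is exactly why a second-moment/Parseval majorization cannot close the case $\alpha>2$; your remark that the argument only breaks ``beyond $\alpha=3$'' is not correct. The paper handles $\alpha\in[2,3]$ quite differently: it first reduces to $\alpha=3$ via Lemma \ref{lem:BarnesOzgur}, then invokes the Fourier-analytic variational bound of Theorem \ref{thm:generalbound} (the constraint $\mathbb{E}[Z^2]\le\mathbb{E}[SZ]$), reduced by the KKT/LICQ analysis to the explicit two-point family $\widetilde{\Gamma}_\rho$ of Theorem \ref{thm:derivativeconcave} (via Remark \ref{rem:convex}), and finishes with a one-variable monotonicity computation.

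The $\alpha\in(1,2)$ half is, as you anticipate, the technical centerpiece, but your proposal for it is only a program: the quantitative FKN stability in the near-dictator regime, the strict gap in the spread regime, and the uniform-in-$\alpha$ control of Taylor remainders are all asserted rather than proved, and there is no reason the ``matching at an optimal cutoff $\eta$'' mechanism would reproduce the precise threshold $\rho\le\frac{1-\theta(\alpha)}{1+\theta(\alpha)}$ with $\theta$ defined by \eqref{eq:theta-1}. In the paper that threshold does not come from an FKN dichotomy at all (FKN enters only in the separate improvement of Section \ref{subsec:Further-Improvement}); it comes from the same variational bound $\widetilde{\Gamma}_\rho(\tfrac12)$, reparametrized by $p$, where one shows $h_\alpha(p)$ is nondecreasing on $[\frac{1-\rho}{2-2\rho+\rho^2},\frac12]$ precisely when $\alpha\ge\theta^{-1}(\frac{1-\rho}{1+\rho})$ (Lemmas \ref{lem:phi} and \ref{lem:-1}), i.e.\ the threshold is an endpoint condition $\alpha^*(\tfrac12)=\theta^{-1}(\frac{1-\rho}{1+\rho})$ in a concavity/monotonicity analysis of an explicit one-parameter family, not a regime-matching constant. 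As written, neither half of your argument establishes the theorem.
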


For symmetric $\alpha$-stability, our bound in Theorem \ref{thm:symmetricpower}
improves the previously best known bound $\breve{\alpha}_{\max}\ge3$
to $\breve{\alpha}_{\max}\ge5$, and for the asymmetric case, our
bound in Theorem \ref{thm:asymmetricpower} improves the previously
best known bound $\alpha_{\max}\ge2$ to $\alpha_{\max}\ge3$. In
other words, we have verified the Mossel--O'Donnell conjecture for
all $2\le\alpha\le5$ in the symmetric setting, and for all $2\le\alpha\le3$
in the asymmetric setting. Our results for $3<\alpha\le5$ in the
symmetric setting and for $2<\alpha\le3$ in the asymmetric setting
are new. As for the Courtade--Kumar conjecture and Li--Médard conjecture,
we improve Samorodnitsky's result \cite{samorodnitsky2016entropy}
in the sense that we provide an explicit dimension-independent threshold
$\rho^{*}\approx0.461491$ for which the symmetric versions of the
Courtade--Kumar and Li--Médard conjectures hold for all $\rho\in[0,\rho^{*}]$.

Theorems \ref{thm:symmetricpower} and \ref{thm:asymmetricpower}
are proven by combining discrete Fourier analysis and optimization
theory. In fact, they are consequences of a general bound on the maximal
$\Phi$-stability derived in this paper by using these techniques.
Furthermore, we also improve our bound for symmetric $\alpha$-stability
by incorporating improvements of Friedgut--Kalai--Naor (FKN) theorem
\cite{friedgut2002boolean} into our method. Our improved bounds are
presented in the optimization form, which seems difficult to simplify.
Numerical evaluation of this improved bound implies that the value
of the threshold $\rho^{*}$ in Theorem \ref{thm:symmetricpower}
can be improved to $0.83$. Our improvements of the FKN theorem are
sharp or asymptotically sharp for certain cases.

\subsection{Related Works}

We next summarize the literature on the noise stability problem. Although
the study of the noise stability problem originated in the seminal
papers \cite{gacs1973common,witsenhausen1975sequences,borell1985geometric},
the term \emph{noise stability} was first introduced by Benjamini,
Kalai, and Schramm \cite{benjamini1999noise}; see a brief survey
on \cite[p. 68]{O'Donnell14analysisof}. In \cite{benjamini1999noise},
only the case of $\alpha=2$ was studied. As mentioned previously,
this was subsequently generalized to the cases of $\alpha>2$, $\alpha=1$,
and $1<\alpha<2$ in \cite{mossel2005coin,courtade2014boolean,li2019boolean},
and the conjectures mentioned above were posed along with these generalizations.
In fact, a weaker version (the two-function version) of Courtade--Kumar
conjecture was solved by Pichler, Piantanida, and Matz \cite{pichler2018dictator}
by using Fourier analysis. Specifically, they showed that $I(f(\mathbf{Y});g(\mathbf{X}))$
is maximized by a pair of identical dictator functions over all Boolean
functions $(f,g)$, including but not limited to balanced Boolean
functions. In fact, if $(f,g)$ are additionally restricted to be
balanced, then this result is just a consequence of Witsenhausen's
maximal correlation bound \cite{witsenhausen1975sequences}. In other
words, Pichler, Piantanida, and Matz's contribution is addressing
the unbalanced case. However, the situation is totally different for
the single-function version of Courtade--Kumar conjecture, since
the latter is open even for the balanced case. A classic bound on
the mutual information $I(f(\mathbf{Y});\mathbf{X})$ for the balanced
case is $\rho^{2}$, proven by Witsenhausen and Wyner \cite{witsenhausen1975conditional}
(also see \cite{erkip1996efficiency}). Such a bound can be proved
via the so-called \emph{Mrs.\ Gerber's lemma}~\cite{wyner1973theorem}
or the \emph{hypercontractivity inequality} \cite{O'Donnell14analysisof}.
Ordentlich, Shayevitz, and Weinstein \cite{ordentlich2016improved}
improve this bound to a sharper one for small $\rho$. This new bound
turns out to be asymptotically sharp in the limiting case $\rho\to0$.
In 2016, Samorodnitsky \cite{samorodnitsky2016entropy} made a significant
breakthrough on the Courtade--Kumar conjecture. Specifically, he
proved the existence of a {\em dimension-independent} threshold
$\rho_{0}$ for which the Courtade--Kumar conjecture holds for all
$\rho\in[0,\rho_{0}]$. However, the value of $\rho_{0}$ was not
explicitly given in his paper, and required to be ``sufficiently
small''. A weaker version of this result in which the threshold $\rho_{0}$
is replaced by a sequence that vanishes as $n\to\infty$ was also
proven in \cite{ordentlich2016improved,yang2019most}. In addition,
by considering a variant of noise model, Eldan, Mikulincer, and Raghavendra
recently prove a variant version of the Courtade--Kumar conjecture
\cite{eldan2022noise}. Their proof is based on the so-called renormalized
Brownian motion.

The noise stability of \emph{unbalanced} Boolean functions was also
widely investigated in the literature. For the mean $a=1/4$, by combining
Fourier analysis with a coding-theoretic result, the present author
and Tan \cite{yu2021non} showed that the indicator functions of $(n-2)$-subcubes
(in $n$-dimensional discrete cube) maximize the $2$-stability with
$a=1/4$. When the mean is small, hypercontractivity inequalities
are a effective tool to address this case. In particular, Kahn, Kalai,
and Linial \cite{kahn1988influence} first applied the single-function
version of (forward) hypercontractivity inequalities to obtain bounds
for the $2$-stability problem, by substituting the nonnegative functions
in the hypercontractivity inequalities with the Boolean functions.
Mossel and O'Donnell \cite{mossel2006non,O'Donnell14analysisof} applied
the two-function version of hypercontractivity inequalities to obtain
bounds for the two-function version of $\alpha$-stability problem
in a similar way. Kahn, Kalai, and Linial's result as well as Mossel
and O'Donnell's are known as \emph{small-set expansion theorems}.
Kamath and Anantharam \cite{kamath2016non} slightly strengthened
the small-set expansion theorems via utilizing hypercontractivity
inequalities in a slightly different way. All these bounds derived
by hypercontractivity inequalities are asymptotically sharp in certain
sense when the mean approaches zero \cite{O'Donnell14analysisof}.
A variant of the $\alpha$-stability problem with the means vanishing
exponentially as $n\to\infty$ was studied in \cite{ordentlich2020note,kirshner2019moment,yu2021graphs,yu2021strong}.
Various stronger version of hypercontractivity inequalities were derived
or used to obtain sharper bounds on the $\alpha$-stability in these
papers. The optimal exponent for the $\alpha$-stability problem in
this variant setting was explicitly given in entropy optimization
forms in \cite{yu2021graphs,yu2021strong}, not only for the binary
random vectors $(\mathbf{X},\mathbf{Y})$, but also for the random
vectors defined on very general spaces (Polish spaces).

The maximal $\Phi$-stability problem in the Gaussian setting with
$\Phi$ restricted to be convex and increasing was fully resolved
by Borell \cite{borell1985geometric} in 1985. In particular, he showed
that the $\Phi$-stability is maximized by the indicators of half-spaces
over all measurable Boolean functions $f:\mathbb{R}^{n}\to\{0,1\}$
of the same measure; see an explicit statement of this result in \cite{kindler2015remarks}.
Such a result is known as \emph{Borell's Isoperimetric Theorem}. The
Gaussian analogues of the Courtade--Kumar conjecture and Li--Médard
conjecture were consequences of Borell's Isoperimetric Theorem, which
were also proved respectively by Kindler, O'Donnell, and Witmer \cite{kindler2015remarks}
and by Eldan \cite{eldan2015two} using alternative approaches.

\subsection{Organization}

This paper is organized as follows. In Sections 2 and 3, we present
our main results and the improvements in detail. The proofs of the
main results and related lemmas, propositions, and corollaries are
provided in Sections 4-9.

\section{Main Results}

In this paper, we aim at determining the maximal $\Phi$-stability
for given volume $a\in(0,1)$. By using Fourier analysis, we first
prove a general bound on the maximal $\Phi$-stability. 
\begin{thm}
\label{thm:generalbound} For $a,\rho\in(0,1)$, $\mathbf{MaxStab}_{\Phi}(a)\leq\Gamma_{\rho}(a)$,
where 
\begin{align}
\Gamma_{\rho}(a):=\sup_{P_{Z|S}} & \mathbb{E}[\Phi(a+\rho Z)]\label{eq:generalbound}\\
\mathrm{s.t.}\; & 0\leq a+\rho Z\leq1\quad\mathrm{a.s.}\label{eq:con1}\\
 & \mathbb{E}[Z]=0\label{eq:con2}\\
 & \mathbb{E}[Z^{2}]\leq\mathbb{E}[SZ]\label{eq:con3}
\end{align}
with\footnote{Throughout this paper, we use $P_{X}$ to denote the probability mass
function of a random variable $X$, and use $P_{Y|X}$ to denote the
conditional probability mass function of a random variable $Y$ given
$X$.} 
\begin{equation}
P_{S}(s)=\begin{cases}
1-a & s=-a\\
a & s=1-a
\end{cases}.\label{eq:P_S}
\end{equation}
\end{thm}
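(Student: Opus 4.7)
The plan is to exhibit, for any Boolean function $f$ with mean $a$, a specific feasible pair $(S, Z)$ for the optimization \eqref{eq:generalbound}--\eqref{eq:con3} whose objective value equals $\mathbf{Stab}_{\Phi}[f]$. The natural choice is to \emph{couple} $S$ and $Z$ through the same uniform input $\mathbf{X}$: set $S := f(\mathbf{X}) - a$ and $Z := \rho^{-1}(T_{\rho} f(\mathbf{X}) - a)$. Since $f(\mathbf{X}) \sim \mathrm{Bern}(a)$, the marginal of $S$ is exactly $P_S$ in \eqref{eq:P_S}, so this coupling defines a valid candidate $P_{Z|S}$, and $\mathbb{E}[\Phi(a + \rho Z)] = \mathbf{Stab}_{\Phi}[f]$ by construction. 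Feasibility will thus immediately deliver the bound.

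The order of verification is as follows. Constraint \eqref{eq:con1} follows from $T_{\rho} f(\mathbf{X}) = \mathbb{E}[f(\mathbf{Y}) \mid \mathbf{X}] \in [0,1]$, which holds because $f$ is $\{0,1\}$-valued. Constraint \eqref{eq:con2} follows from the fact that $T_{\rho}$ preserves the mean, so $\mathbb{E}[T_\rho f(\mathbf{X})] = a$. For constraint \eqref{eq:con3}, I would expand both sides using the Walsh--Fourier basis. Writing $f = \sum_T \hat{f}(T)\chi_T$ with $\chi_T(\mathbf{x}) = \prod_{i \in T} x_i$, and using that $T_\rho$ is diagonalized by this basis with eigenvalues $\rho^{|T|}$, one obtains
\[
\mathbb{E}[Z^2] = \sum_{k \geq 1} \rho^{2k-2} W_k(f), \qquad \mathbb{E}[SZ] = \sum_{k \geq 1} \rho^{k-1} W_k(f),
\]
where $W_k(f) := \sum_{|T|=k}\hat{f}(T)^2$, so the inequality \eqref{eq:con3} reduces to the term-by-term comparison $\rho^{2k-2} \leq \rho^{k-1}$, which holds for all $\rho \in (0,1)$ and $k \geq 1$.

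The only step requiring any real work is the Fourier verification of \eqref{eq:con3}, but this amounts to the identity $\Var(T_{\rho}f) \leq \rho \cdot \Cov(f, T_{\rho}f)$, which is essentially immediate from the level-wise decomposition above. Since the level-$1$ comparison is an equality, this relaxation is tight on pure level-$1$ functions, consistent with the expectation that dictator functions are extremizers. Apart from this one-line Fourier inequality, all other steps are definitional, so I do not foresee any substantive obstacle.
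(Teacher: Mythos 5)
Your proposal is correct and coincides with the paper's own proof: your choice $Z=\rho^{-1}(T_{\rho}f(\mathbf{X})-a)$, $S=f(\mathbf{X})-a$ is exactly the paper's $Z=\sum_{k\ge1}\rho^{k-1}f_{k}(\mathbf{X})$, $S=\sum_{k\ge1}f_{k}(\mathbf{X})$, and the verification of \eqref{eq:con3} via the level-wise comparison $\rho^{2(k-1)}\le\rho^{k-1}$ is the same Fourier argument used there. Nothing further is needed.
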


The supremum in \eqref{eq:generalbound} is taken over all conditional
probability mass functions (conditional pmf) $P_{Z|S}$, and the random
variables $(S,Z)$ in the objective function and constraints follow
the joint distribution $P_{SZ}:=P_{S}P_{Z|S}$.

Note that the constraint in \eqref{eq:con3} effectively dominates
(i.e., upper bounds) the energy of $Z$, since obviously, this constraint
implies $\mathbb{E}[Z^{2}]\leq\mathbb{E}[SZ]\leq\mathbb{E}[S^{2}]=a(1-a)$.
On the other hand, $\Phi$ is convex, and hence, the objective function
in \eqref{eq:generalbound} is dominated if the energy of $Z$ is
dominated. In other words, the constraint \eqref{eq:con3} dominates
the objective function via dominating the energy of $Z$. 
\begin{proof}[Proof of Theorem \ref{thm:generalbound}]
Our proof relies on Boolean Fourier analysis \cite{O'Donnell14analysisof}.
Consider the Fourier basis\footnote{Throughout this paper, we denote $[m:n]:=\{m,m+1,...,n\}$. When $m=1$,
for brevity, we denote $[n]:=[1:n]$. } $\{\chi_{\mathcal{S}}\}_{\mathcal{S}\subseteq[n]}$ with $\chi_{\mathcal{S}}(\mathbf{x}):=\prod_{i\in\mathcal{S}}x_{i}$
for $\mathcal{S}\subseteq[n]$. Then for a function $f:\{-1,1\}^{n}\to\mathbb{R}$,
define its Fourier coefficients as 
\begin{align}
 & \hat{f}_{\mathcal{S}}:=\mathbb{E}_{\mathbf{X}\sim\mathrm{Unif}\{-1,1\}^{n}}[f(\mathbf{X})\chi_{\mathcal{S}}(\mathbf{X})],\;\mathcal{S}\subseteq[n].\label{eq:Fourier}
\end{align}
Then the Fourier expansion of the function $f$ (cf. \cite[Equation (1.6)]{O'Donnell14analysisof})
is 
\[
f(\mathbf{x})=\sum_{\mathcal{S}\subseteq[n]}\hat{f}_{\mathcal{S}}\chi_{\mathcal{S}}(\mathbf{x}).
\]
The \emph{degree-$k$ Fourier weight }of $f$ is defined as 
\begin{align}
\mathbf{W}_{k}[f] & :=\sum_{\mathcal{S}:|\mathcal{S}|=k}\hat{f}_{\mathcal{S}}^{2},\quad k\in[0:n].\label{eq:FourierWeight}
\end{align}
For brevity, we denote $\mathbf{W}_{k}[f]$ as $\mathbf{W}_{k}$.

By the definition of Fourier weights, it is easily seen that for a
Boolean function with volume $\mathbb{E}f=a$, 
\[
\mathbf{W}_{0}=a^{2}\textrm{ and }\sum_{k=0}^{n}\mathbf{W}_{k}=a.
\]
Define the \emph{degree-$k$ part }of $f$ as 
\[
f_{k}(\mathbf{x}):=\sum_{\mathcal{S}\subseteq[n]:|\mathcal{S}|=k}\hat{f}_{\mathcal{S}}\chi_{\mathcal{S}}(\mathbf{x}).
\]
Then it is easy to check that 
\begin{align*}
 & T_{\rho}f(\mathbf{x})=\sum_{k=0}^{n}\rho^{k}f_{k}(\mathbf{x})\\
 & \sum_{k=0}^{n}f_{k}(\mathbf{x})=f(\mathbf{x})\in[0,1],\forall\mathbf{x}\\
 & f_{0}(\mathbf{x})=a,\forall\mathbf{x}\\
 & \sum_{k=0}^{n}\mathbb{E}[f_{k}(\mathbf{X})^{2}]=\sum_{k=0}^{n}\mathbf{W}_{k}=a\\
 & \mathbb{E}[f_{k}(\mathbf{X})f_{k'}(\mathbf{X})]=0,0\le k'<k\le n.
\end{align*}
Denote 
\begin{align}
Z & :=\sum_{k=1}^{n}\rho^{k-1}f_{k}(\mathbf{X})\label{eq:S}\\
S & :=f-a=\sum_{k=1}^{n}f_{k}(\mathbf{X}).\label{eq:Z}
\end{align}
Then 
\begin{align*}
 & T_{\rho}f(\mathbf{X})=a+\rho Z,\qquad\mathbb{E}Z=0,
\end{align*}
and $S$ follows the distribution in \eqref{eq:P_S}. Observe that
\begin{align*}
 & \mathbb{E}[Z^{2}]=\sum_{k=1}^{n}\rho^{2(k-1)}\mathbf{W}_{k}\leq\sum_{k=1}^{n}\rho^{k-1}\mathbf{W}_{k}=\mathbb{E}[SZ].
\end{align*}
Hence $\mathbf{MaxStab}_{\Phi}(a)\leq\Gamma_{\rho}(a)$. 
\end{proof}
We next provide several properties of the optimization problem in
\eqref{eq:generalbound}. Denote $\mathcal{S}:=\{-a,1-a\}$. The set
$A:=\{(\Phi(a+\rho z),z,z^{2}-sz):\,0\leq a+\rho z\le1\}$ lies in
the $3$-dimensional Euclidean space, and $\mathbb{E}[(\Phi(a+\rho Z),Z,Z^{2}-sZ)|S=s]$
lies in the convex hull of $A$. Since $A$ is connected, by Bunt's
extension of Carathéodory's theorem \cite{bunt1934bijdrage} or a
more general version \cite[Theorem 18]{eggleston1966convexity}, each
point in the convex hull of $A$ is a convex combination of at most
$3$ points in $A$. This further implies that without loss of optimality,
it suffices to restrict $P_{Z|S}$ in the optimization in \eqref{eq:generalbound}
such that $|\mathrm{supp}(P_{Z|S=s})|\le3$ for each $s\in\mathcal{S}$.
As a consequence, the supremum in \eqref{eq:generalbound} is actually
a maximum, since the feasible region is compact and the objective
and constraint functions are continuous.

For the optimization problem in \eqref{eq:generalbound}, we also
claim that without loss of optimality, it suffices to restrict $|\mathrm{supp}(P_{Z})|\le4$.
First, observe that the $\sup_{P_{Z|S}}$ in \eqref{eq:generalbound}
can be rewritten as three-fold optimizations: $\sup_{\mathcal{Z}}\sup_{P_{S|Z}}\sup_{P_{Z}}$,
where the first sup is taken over all countable subsets $\mathcal{Z}$
of $\mathbb{R}$ with each element $z$ satisfying $0\leq a+\rho z\leq1$,
the second sup is taken over all conditional pmfs $P_{S|Z}$ (specifically,
$\{P_{S|Z=z},z\in\mathcal{Z}\}$), and the last sup is taken over
all pmfs $P_{Z}$ on $\mathcal{Z}$ such that $\sum_{z\in\mathcal{Z}}P_{S|Z}(-a|z)P_{Z}(z)=P_{S}(-a)$
(the analogous equality for $s=1-a$ automatically holds if this equality
holds) and constraints \eqref{eq:con2} and \eqref{eq:con3} hold.
Here by convention, if there is no feasible solution for a supremization,
its value is set to $-\infty$. Given $\mathcal{Z}$ and $P_{S|Z}$,
the third sup above, $\sup_{P_{Z}}$, is in fact a linear program.
Following arguments similar to the one in the last paragraph, one
can restrict $|\mathrm{supp}(P_{Z})|\le4$.

Let $m\ge3$. For $s\in\mathcal{S}$, denote $(z_{s,i})_{i\in[m]}\in\mathbb{R}^{m}$
as a vector consisting of all (possibly repetitions of) the elements
in the support of $P_{Z|S=s}$. Denote $(p_{s,i})_{(s,i)\in\mathcal{S}\times[m]}$
as a joint pmf on $\mathcal{S}\times[m]$ whose marginal on $\mathcal{S}$
is $P_{S}$. Then, the optimization in \eqref{eq:generalbound} is
equivalently taken over all the tuples $(z_{s,i},p_{s,i})_{(s,i)\in\mathcal{S}\times[m]}$
such that $\sum_{i\in[m]}p_{s,i}=P_{S}(s),\forall s\in\mathcal{S}$.
Without loss of optimality, we may additionally assume $p_{s,i}>0,\forall s\in\mathcal{S},i\in[m]$,
since if $p_{s,i}=0,p_{s,j}>0$, then we can re-choose $p_{s,i}\leftarrow\frac{p_{s,j}}{2},p_{s,j}\leftarrow\frac{p_{s,j}}{2},z_{s,i}\leftarrow z_{s,j}$
which leads to the same distribution $P_{SZ}$. Then, the arguments
above lead us to define 
\begin{align}
\Lambda_{\rho}^{(m)}(a):=\max_{z_{s,i}\in\mathbb{R},p_{s,i}>0,(s,i)\in\mathcal{S}\times[m]} & \sum_{(s,i)\in\mathcal{S}\times[m]}p_{s,i}\Phi(a+\rho z_{s,i})\label{eq:generalboundsimple}\\
\mathrm{s.t.}\; & \sum_{i\in[m]}p_{s,i}=P_{S}(s),\forall s\in\mathcal{S},\label{eq:con1-1}\\
 & 0\leq a+\rho z_{s,i}\leq1,\forall(s,i)\in\mathcal{S}\times[m],\label{eq:con2-1}\\
 & \sum_{(s,i)\in\mathcal{S}\times[m]}p_{s,i}z_{s,i}=0,\label{eq:con3-1}\\
 & \sum_{(s,i)\in\mathcal{S}\times[m]}p_{s,i}(z_{s,i}^{2}-sz_{s,i})\leq0.\label{eq:con4-1}
\end{align}
As discussed above, without changing the value of the optimization,
we can add the additional constraint $|\{z_{s,i}\}_{(s,i)\in\mathcal{S}\times[m]}|\le4$
into the maximization problem in \eqref{eq:generalboundsimple}. Combining
all the points above, we arrive at the following results. 
\begin{prop}
\label{prop:property} For $a,\rho\in(0,1)$ and $m\ge3$, the following
hold. \\
 1. It holds that $\Gamma_{\rho}(a)=\Lambda_{\rho}^{(m)}(a)$. (Hence
we denote $\Lambda_{\rho}(a):=\Lambda_{\rho}^{(m)}(a)$ for $m\ge3$.)\\
 2. Any optimal solution to the optimization problem in \eqref{eq:generalbound}
(or the equivalent one in \eqref{eq:generalboundsimple}) satisfies
$\mathbb{E}[Z^{2}]=\mathbb{E}[SZ]$.\\
 3. Any optimal solution to the maximization problem in \eqref{eq:generalboundsimple}
satisfies that $z_{1-a,j}\ge z_{-a,i}+\frac{1}{2}$ for all $i,j\in[m]$
such that $z_{-a,i}>\frac{-a}{\rho},\,z_{1-a,j}<\frac{1-a}{\rho}$.
Moreover, this condition is satisfied by some $i,j\in[m]$.\\
 4. The linear independence constraint qualification (LICQ) \footnote{The LICQ for a maximizer point $\mathbf{x}^{*}$ is the condition
that the gradients of the active inequality constraints and the gradients
of the equality constraints are linearly independent at $\mathbf{x}^{*}$.
We refer readers to \cite{peterson1973review,bazaraa2013nonlinear}
for more details on LICQ.} is satisfied for the maximization problem in \eqref{eq:generalboundsimple}. 
\end{prop}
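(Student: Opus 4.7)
The plan is to establish the four parts in order, each building on the previous.

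Part 1 is essentially already contained in the discussion preceding the proposition. The equivalence $\Gamma_\rho(a) = \Lambda_\rho^{(m)}(a)$ follows from two reductions: first, by Carath\'eodory's theorem applied to the three-dimensional set $A = \{(\Phi(a+\rho z), z, z^2-sz): 0 \leq a+\rho z \leq 1\}$, any feasible conditional distribution $P_{Z|S=s}$ may be assumed to have support of size at most $4$; second, by the fundamental theorem of linear programming (the inner optimization over $P_{Z|S=s}$ with fixed first and second conditional moments is linear), this support bound may be tightened to $3$. Padding extra indices in $[m]$ with repeated atoms then handles general $m \geq 3$, so $\Gamma_\rho(a)=\Lambda_\rho^{(m)}(a)$.

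For Part 2, we argue by a direct perturbation. First, observe that for $\rho\in(0,1)$ every feasible configuration must contain at least one interior atom: any distribution supported entirely on $\{-a/\rho,(1-a)/\rho\}$ and with mean zero satisfies $\mathbb{E}[Z^2]=a(1-a)/\rho^2$ while $\mathbb{E}[SZ]\le a(1-a)/\rho$, violating \eqref{eq:con3} for $\rho<1$. Suppose toward contradiction that an optimum has $\mathbb{E}[Z^2]<\mathbb{E}[SZ]$ strictly. Choose an interior atom $(s_0,z_0)$ with mass $p_0$ and split it into $(s_0,z_0-\eta_1)$ and $(s_0,z_0+\eta_2)$ with masses $p_-=p_0\eta_2/(\eta_1+\eta_2)$ and $p_+=p_0\eta_1/(\eta_1+\eta_2)$ chosen to preserve $\mathbb{E}[Z\mathbf{1}_{S=s_0}]$. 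A direct calculation shows that this preserves $\mathbb{E}[Z]$, $\mathbb{E}[SZ]$, and the marginal of $S$, increases $\mathbb{E}[Z^2]$ by exactly $p_0\eta_1\eta_2$, and strictly increases the objective by the Jensen gap (using strict convexity of $\Phi$). For sufficiently small $\eta_1,\eta_2>0$ the slack in \eqref{eq:con3} absorbs the second-moment increase while the new atoms remain in $[-a/\rho,(1-a)/\rho]$, so the perturbation is feasible and strictly better, contradicting optimality.

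For Part 3, Part 2 together with complementary slackness gives that the Lagrange multiplier $\mu$ attached to \eqref{eq:con3} is strictly positive. The KKT stationarity in $z_{s,i}$ at any interior atom reads $\rho\Phi'(a+\rho z_{s,i})=\lambda+\mu(2z_{s,i}-s)$. Subtracting for an interior pair on the two sides yields
\[
\rho\bigl[\Phi'(a+\rho z_{1-a,j})-\Phi'(a+\rho z_{-a,i})\bigr]=\mu\bigl[2(z_{1-a,j}-z_{-a,i})-1\bigr].
\]
If $z_{1-a,j}<z_{-a,i}$, swapping a small mass between $(-a,z_{-a,i})$ and $(1-a,z_{1-a,j})$ preserves both marginals and the objective but strictly increases $\mathbb{E}[SZ]$, producing another optimum with slack in \eqref{eq:con3}, contradicting Part 2. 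Hence $z_{1-a,j}\ge z_{-a,i}$; then strict monotonicity of $\Phi'$ makes the left side above non-negative, giving $z_{1-a,j}-z_{-a,i}\ge 1/2$. The existence of interior atoms on both sides is obtained by extending the all-boundary infeasibility computation from Part 2 to the remaining mixed boundary configurations on a single side (one checks that any $Z|S=-a$ supported in $\{-a/\rho,(1-a)/\rho\}$ forces a contradiction of \eqref{eq:con3} for $\rho<1$ by a one-parameter computation in the conditional probability $q=P(Z=(1-a)/\rho\mid S=-a)$).

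For Part 4, suppose a linear combination
\[
\alpha_{-a}\nabla h_{-a}+\alpha_{1-a}\nabla h_{1-a}+\beta\nabla h_{\mathrm{mean}}+\gamma\nabla g_{\mathrm{energy}}+\sum_{(s,i)\in B}\delta_{s,i}\nabla g_{s,i}^{\mathrm{bd}}=0
\]
of the gradients of the active constraints vanishes, where $B$ collects the boundary-active indices. Reading the $p_{s,i}$-coordinate at each support atom gives $\alpha_s+\beta z_{s,i}+\gamma(z_{s,i}^2-sz_{s,i})=0$, and the $z_{s,i}$-coordinate at an interior atom gives $\beta+\gamma(2z_{s,i}-s)=0$. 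Applying the latter to the interior atoms $z_-^{\mathrm{int}},z_+^{\mathrm{int}}$ supplied by Part 3 and subtracting yields $\gamma\bigl[2(z_+^{\mathrm{int}}-z_-^{\mathrm{int}})-1\bigr]=0$; the strict gap from Part 3 forces $\gamma=0$, and then $\beta=0$. The $p_{s,i}$-identities now give $\alpha_{-a}=\alpha_{1-a}=0$, and the $z_{s,i}$-coordinate at each boundary atom gives $\delta_{s,i}=0$, confirming LICQ. The main obstacle will be Part 2: one must ensure the splitting perturbation is genuinely feasible (both new atoms remaining in $[-a/\rho,(1-a)/\rho]$, with the slack in \eqref{eq:con3} strictly dominating the $p_0\eta_1\eta_2$ second-moment increase), and the companion infeasibility calculation for mixed-boundary one-sided configurations needs a careful case check over the conditional boundary probability.
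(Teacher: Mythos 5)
Your Parts 1 and 2 are essentially the paper's own argument and are fine (your asymmetric split is a harmless variant of the paper's symmetric $\pm\delta$ split, and the boundary-only infeasibility computation matches the paper's Cauchy--Schwarz step). The trouble starts in Part 3, where the passage from $z_{1-a,j}\ge z_{-a,i}$ to the gap $\ge\frac{1}{2}$ rests on two unsupported claims. First, ``Part 2 together with complementary slackness gives $\mu>0$'' is a non sequitur: activity of \eqref{eq:con3} only says $\mu\cdot(\text{slack})=0$ and does not exclude $\mu=0$; with $\mu=0$ your displayed identity is vacuous, and in particular the configuration $z_{1-a,j}=z_{-a,i}$ (which is exactly what must be ruled out) survives. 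Second, invoking KKT stationarity here presupposes a constraint qualification, which in your scheme is only established in Part 4 --- and your Part 4 uses Part 3, so the logic is circular; moreover the proposition assumes only that $\Phi$ is continuous and strictly convex, so $\Phi'$ need not exist. The paper avoids all of this by proving the $\frac{1}{2}$ gap with a direct perturbation: shift an interior $-a$-atom left by $\epsilon$ and an interior $(1-a)$-atom right by $\epsilon'$ with $p_{-a,i}\epsilon=p_{1-a,j}\epsilon'$; then $\mathbb{E}[SZ]-\mathbb{E}[Z^{2}]$ changes by $p_{-a,i}\epsilon\,\bigl(1-2(z_{1-a,j}-z_{-a,i})-(\epsilon+\epsilon')\bigr)>0$ whenever the gap is $<\frac{1}{2}$, while convexity strictly increases the objective, contradicting optimality. (Your mass-swap argument for $z_{1-a,j}<z_{-a,i}$ is correct but is subsumed by this. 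The existence half you leave as a ``careful case check''; the paper gets it in two lines: if all $(1-a)$-atoms sat at $(1-a)/\rho$, the mean-zero constraint would force $Z=S/\rho$, violating \eqref{eq:con3}, and symmetrically for the $-a$ side.)

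Part 4 then breaks at precisely the delicate point. Part 3 yields $z_{+}^{\mathrm{int}}-z_{-}^{\mathrm{int}}\ge\frac{1}{2}$, not a strict inequality, so $\gamma\bigl[2(z_{+}^{\mathrm{int}}-z_{-}^{\mathrm{int}})-1\bigr]=0$ does not force $\gamma=0$: the borderline configuration in which $2z_{s,i}-s$ is constant over all interior atoms (all gaps exactly $\frac{1}{2}$) is exactly the degenerate case that threatens LICQ, and your argument says nothing about it. The paper's proof handles it: non-full-rank of the gradient matrix would in addition force all interior atoms on each side to coincide (via the further $2$-row submatrices involving the $z$-columns), and then $\mathbb{E}Z=0$ together with $\mathbb{E}[Z^{2}]=\mathbb{E}[SZ]$ forces $Z=0$ or $Z=S$, i.e.\ gap $0$ or $1$, contradicting gap $=\frac{1}{2}$. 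Without an argument of this kind your LICQ claim is unproved; repairing Part 3 by the perturbation above and adding this degenerate-case analysis would bring the proof in line with the paper's.
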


Statement 1 of Proposition \ref{prop:property} has already been derived
above, and Statements 2-4 are proven in Section \ref{sec:Proof-of-Proposition}.

Proposition \ref{prop:property} enables us to employ the Karush--Kuhn--Tucker
(KKT) conditions to simplify the bound in Theorem \ref{thm:generalbound}.
In fact, the bound in Theorem \ref{thm:generalbound} (or the equivalent
one in \eqref{eq:generalboundsimple}) is general enough to apply
to several common cases, e.g., the asymmetric and symmetric $\alpha$-stabilities,
as follows.

\subsection{Symmetric $\Phi$}

We first consider symmetric $\Phi$. Here $\Phi$ is said to be symmetric
(w.r.t. $\frac{1}{2}$) if $\Phi(\frac{1}{2}-t)=\Phi(\frac{1}{2}+t)$
for $t\in[0,\frac{1}{2}]$. Define 
\begin{equation}
\overline{\Gamma}_{\rho}(a):=\sup_{(z_{1},z_{2})\in\overline{\mathcal{Z}}}(1-a-p)\Phi(0)+p\Phi(a+\rho z_{1})+q\Phi(a+\rho z_{2})+(a-q)\Phi(1),\label{eq:-29}
\end{equation}
where 
\begin{align}
 & p=\frac{(1-a)a(1-\rho)}{(1-\rho-\rho z_{1}+\rho z_{2})(a+\rho z_{1})},\label{eq:-7}\\
 & q=\frac{(1-a)a(1-\rho)}{(1-\rho-\rho z_{1}+\rho z_{2})(1-a-\rho z_{2})},\label{eq:-8}
\end{align}
and 
\begin{align*}
\overline{\mathcal{Z}}:= & \biggl\{(z_{1},z_{2})\in(\frac{-a}{\rho},\frac{1-a}{\rho})^{2}:z_{2}-z_{1}\geq\frac{1}{2},\\
 & z_{2}-z_{1}\geq\frac{(1-\rho)z_{2}}{1-a-\rho z_{2}},\;z_{2}-z_{1}\geq-\frac{(1-\rho)z_{1}}{a+\rho z_{1}}\biggr\}.
\end{align*}
Define 
\begin{equation}
\hat{\Gamma}_{\rho}(a):=\sup_{\max\{\frac{-a}{\rho},c\}\le\tilde{z}\le\frac{-a(1+b)}{2(1-a)}}(1-a-p)\Phi(a+\rho\tilde{z})+p\Phi(a+\rho(b-\tilde{z}))+a\Phi(a+\rho\hat{z}),\label{eq:-37}
\end{equation}
where $b=\frac{1-2a}{\rho}$, $c=\frac{1}{2}(b-\sqrt{\frac{a+2ab+b^{2}}{1-a}})$,
and 
\begin{align}
 & \hat{z}=\frac{1}{2}(\sqrt{\frac{\Delta}{a}}+b+1),\label{eq:-25-1}\\
 & p=\frac{-\sqrt{a\Delta}-(a(b+1-2\tilde{z})+2\tilde{z})}{2(b-2\tilde{z})}\label{eq:-26-1}
\end{align}
with $\Delta=a(b^{2}-4b\tilde{z}+2b+4\tilde{z}^{2}+1)+4\tilde{z}(b-\tilde{z})$.
For symmetric $\Phi$, we prove the following result, whose proof
is provided in Section \ref{sec:Proof-of-Theorem-symmetric}. 
\begin{thm}
\label{thm:symmetric} Assume that $\Phi$ is symmetric and continuous
on $[0,1]$, and differentiable on $(0,1)$ whose derivative $\Phi'$
is increasing and continuous. Assume $\rho\in(0,1)$. \\
 1) If additionally, $\Phi'$ is strictly concave on $(0,\frac{1}{2}]$,
then for $a\in(0,\frac{1}{2}]$, $\mathbf{MaxStab}_{\Phi}(a)\leq\Lambda_{\rho}(a)=\overline{\Gamma}_{\rho}(a)$.
Moreover, for $a=\frac{1}{2}$, we can additionally assume $z_{1}+z_{2}=0$
in the supremization in \eqref{eq:geq}. \\
 2) If additionally, $\Phi'$ is strictly convex on $(0,\frac{1}{2}]$,
then for $a\in(0,\frac{1}{2}]$, $\mathbf{MaxStab}_{\Phi}(a)\leq\Lambda_{\rho}(a)=\max\{\hat{\Gamma}_{\rho}(a),\hat{\Gamma}_{\rho}(1-a),\mathbb{E}[\Phi(a+\rho S)]\}$.
In particular, for $a=\frac{1}{2}$, we have $\mathbf{MaxStab}_{\Phi}(\frac{1}{2})\leq\Lambda_{\rho}(\frac{1}{2})=\Phi(\frac{1-\rho}{2}).$ 
\end{thm}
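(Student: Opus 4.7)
The plan is to reduce the problem to the finite-dimensional optimization $\Lambda_{\rho}^{(m)}(a)$ of \eqref{eq:generalboundsimple} (any $m \geq 3$; Proposition \ref{prop:property}, Item 1) and analyze it via Karush--Kuhn--Tucker conditions, which are necessary by the LICQ of Item 4. At any maximizer the energy constraint \eqref{eq:con3-5} is active (Item 2). Letting $\lambda\in\mathbb{R}$ denote the multiplier of the mean constraint \eqref{eq:-47} and $\mu\geq 0$ the multiplier of the energy constraint, stationarity in $z_{s,i}$ at any interior support point (i.e.\ $0<a+\rho z_{s,i}<1$ with $p_{s,i}>0$) takes the form
\begin{equation*}
G(z_{s,i}) \;:=\; \rho\,\Phi'(a + \rho z_{s,i}) + 2\mu\,z_{s,i} \;=\; -\lambda + \mu s.
\end{equation*}
Hence for each $s\in\{-a,\,1-a\}$ every interior $z$-atom of $P_{Z|S=s}$ lies in one level set of $G$.

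Because $\Phi$ is symmetric about $1/2$, $\Phi'$ is antisymmetric about $1/2$ (and vanishes there); the hypothesis of Part 1 (resp.\ Part 2) then makes $\Phi'$ strictly concave (resp.\ convex) on $(0,1/2]$ and strictly convex (resp.\ concave) on $[1/2,1)$. Consequently $G$ changes curvature exactly once, at $z_{0}:=(1-2a)/(2\rho)$ where $a+\rho z_{0}=1/2$, and on each side of $z_{0}$ the level equation $G(z)=c$ has at most two roots. Coupling this with the dimension bound $|\mathrm{supp}(P_{Z})|\leq 4$ (Carath\'eodory; derived before Proposition \ref{prop:property}) and the separation $z_{1-a,j}\geq z_{-a,i}+1/2$ from Item 3 yields a short list of admissible support configurations. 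Under Part 1, I expect to conclude that the two $s$-conditional distributions share the same pair of interior atoms $z_{1}<z_{2}$, so that $P_{Z}$ is supported in $\{-a/\rho,\,z_{1},\,z_{2},\,(1-a)/\rho\}$; the three equality constraints (with energy at equality) then determine the four probabilities, recovering the formulas \eqref{eq:-7}--\eqref{eq:-8} for $p,q$ and the domain $\overline{\mathcal{Z}}$, and optimizing over $(z_{1},z_{2})$ gives $\overline{\Gamma}_{\rho}(a)$. Under Part 2 the same curvature bookkeeping reduces the support to at most three atoms, parametrized by a single scalar $\tilde{z}$ with $b-\tilde{z}$ and $\hat{z}$ forced by the constraints; the mirror branch $\hat{\Gamma}_{\rho}(1-a)$ arises from the reflection $a\mapsto 1-a$, and the degenerate configuration $Z=S$ (all mass at the two endpoints) accounts for the term $\mathbb{E}[\Phi(a+\rho S)]$.

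Finally, at $a=1/2$ the symmetry of $\Phi$ about $1/2$ lets one symmetrize any maximizer under $(Z,S)\mapsto(-Z,-S)$: for Part 1 this produces an optimizer with $z_{1}+z_{2}=0$, while for Part 2 the feasibility window for $\tilde{z}$ collapses to the single point $\tilde{z}=-1/2$ (since $b=0$ and $c=-a(1+b)/(2(1-a))=-1/2$), forcing $p=0$ and $b-\tilde{z}=\hat{z}=1/2$, so that $\Lambda_{\rho}(1/2)=\Phi((1-\rho)/2)$. The main obstacle is the curvature case analysis in the second paragraph: rigorously ruling out configurations with too many interior atoms and certifying the shared (Part 1) versus unshared (Part 2) structure of the two conditional supports from the combined information of KKT, $|\mathrm{supp}(P_{Z})|\leq 4$, and Proposition \ref{prop:property}, Item 3. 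Once the support structure is fixed, the derivation of $p,q,\hat{z}$ and of the feasibility sets reduces to algebraic manipulation of the three constraints \eqref{eq:con2-5}--\eqref{eq:con3-5}.
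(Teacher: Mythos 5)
Your overall framing (reduce to the finite-dimensional program \eqref{eq:generalboundsimple}, invoke LICQ and KKT, use activity of \eqref{eq:con3-5} and the separation property of Proposition \ref{prop:property}, then solve the remaining constraints for the probabilities) is the same route the paper takes, but the proposal has a genuine gap at exactly the step you flag as the ``main obstacle,'' and that step is the heart of the proof. You use only the stationarity condition in $z_{s,i}$, i.e.\ that each interior atom solves $\rho\Phi'(a+\rho z)+\lambda(s-2z)+\eta=0$. Root counting for this equation (concave-then-convex $\Phi'$ minus a line) allows up to \emph{three} interior roots per value of $s$, and neither the Carath\'eodory bound $|\mathrm{supp}(P_Z)|\le 4$ nor the separation $z_{1-a,j}\ge z_{-a,i}+\tfrac12$ suffices to cut this down to the support structures behind $\overline{\Gamma}_\rho(a)$ and $\hat{\Gamma}_\rho(a)$. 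The paper's proof crucially also uses stationarity in the probabilities $p_{s,i}$, which forces the \emph{antiderivative} $G(s,z)=\int_0^z g(s,t)\,\mathrm{d}t$ to take the same value at all atoms of $P_{Z|S=s}$; this ``equal-area'' condition is what rules out two adjacent roots both carrying mass (Part 1) and, in Part 2, is what forces a size-two conditional support to be of the symmetric form $\{\tilde z, b-\tilde z\}$ (i.e.\ $a+\rho z_1+a+\rho z_3=1$). Without it, none of these structural conclusions follow.

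Moreover, the structure you ``expect to conclude'' in Part 1 is not the correct one: the optimizer does not have the two conditionals sharing a common pair of interior atoms; rather each conditional has at most \emph{one} interior atom, with $P_{Z|S=-a}$ supported in $\{-a/\rho,\hat z_1\}$ and $P_{Z|S=1-a}$ in $\{\hat z_2,(1-a)/\rho\}$, $\hat z_2\ge\hat z_1+\tfrac12$ (this is also visible in the form of \eqref{eq:-29}, where the masses at $\Phi(0)$ and $\Phi(1)$ come only from $s=-a$ and $s=1-a$ respectively). If the conditionals really shared both interior atoms, the two constraints $\mathbb{E}Z=0$ and $\mathbb{E}[Z^2]=\mathbb{E}[SZ]$ would not determine the four probabilities, so the step ``the three equality constraints then determine the four probabilities, recovering \eqref{eq:-7}--\eqref{eq:-8}'' would fail. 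The Part 2 bookkeeping (why one conditional must be a singleton, how the sign of $g(-a,-a/\rho)$ splits the cases, and why the degenerate case gives $Z=S$) likewise depends on the $p$-stationarity/equal-area argument combined with Proposition \ref{prop:property}. Your endgame at $a=\tfrac12$ (symmetrization giving $z_1+z_2=0$ in Part 1, and $\tilde z=-\tfrac12$, $p=0$, value $\Phi(\tfrac{1-\rho}{2})$ in Part 2) is consistent with the paper, but it rests on the unestablished support classification, so as written the proposal does not constitute a proof.
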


Note that $\Phi_{\alpha}^{\mathrm{sym}}$ with $\alpha\in(1,2)\cup(3,\infty)$
satisfies the assumption in Statement 1 of Theorem \ref{thm:symmetric},
and $\Phi_{\alpha}^{\mathrm{sym}}$ with $\alpha\in(2,3)$ satisfies
the assumption in Statement 2 of Theorem \ref{thm:symmetric}. We
now apply Theorem \ref{thm:symmetric} to the symmetric $\alpha$-stability.
Let $\rho^{*}$ be the solution in $(0,1)$ to the equation 
\begin{equation}
\psi(\rho):=(1+\rho^{2})\log(\frac{1+\rho}{2})-(1-\rho)^{2}\log(\frac{1-\rho}{2})=0.\label{eq:psi}
\end{equation}
We have that $\rho^{*}\approx0.461491$. Then we prove the Courtade--Kumar
conjecture for $\rho\le\rho^{*}$. The proof is provided in Section
\ref{sec:Proof-of-Theorem-CKConjecture-1}. 
\begin{cor}
\label{cor:CK_Conjecture} $\mathbf{MaxStab}_{1}^{\mathrm{sym}}(\frac{1}{2})$
is attained by dictator functions for $0\le\rho\le\rho^{*}$. 
\end{cor}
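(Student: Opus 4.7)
The plan is to apply Theorem~\ref{thm:symmetric} Statement~1 to $\Phi := \Phi_1^{\mathrm{sym}}$, the negative binary entropy. The hypotheses are readily checked: $\Phi(t) = t\ln t + (1-t)\ln(1-t)$ is symmetric about $\tfrac{1}{2}$ and continuous on $[0,1]$, with $\Phi'(t) = \ln\tfrac{t}{1-t}$ strictly increasing and continuous on $(0,1)$; moreover $\Phi'''(t) = (1-t)^{-2} - t^{-2} < 0$ on $(0,\tfrac{1}{2})$, so $\Phi'$ is strictly concave on $(0,\tfrac{1}{2}]$. Hence $\mathbf{MaxStab}_1^{\mathrm{sym}}(\tfrac{1}{2}) \le \overline{\Gamma}_\rho(\tfrac{1}{2})$, and I may additionally impose the constraint $z_1 + z_2 = 0$ in the supremization defining $\overline{\Gamma}_\rho(\tfrac{1}{2})$.

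Substituting $z_1 = -z_2$ and $u := \rho z_2$ into \eqref{eq:-7}--\eqref{eq:-8} yields $p = q = \frac{(1-\rho)/4}{(1-\rho+2u)(\tfrac{1}{2}-u)}$; since $\Phi(0) = \Phi(1) = 0$, the endpoint terms in \eqref{eq:-29} drop out and symmetry of $\Phi$ collapses the objective to
\[
g(u) := \frac{(1-\rho)\,\Phi(\tfrac{1}{2}+u)}{(1-\rho+2u)(1-2u)},
\]
while the constraints in $\overline{\mathcal{Z}}$ translate to $u \in [\rho/4,\, \rho/2]$. The dictator $f(\mathbf{x}) = 1\{x_k = 1\}$ has $T_\rho f = \tfrac{1}{2}(1+\rho x_k)$ and thus $\mathbf{Stab}_\Phi[f] = \Phi(\tfrac{1+\rho}{2}) = g(\rho/2)$; the corollary therefore reduces to $g(u) \le g(\rho/2)$ on $[\rho/4, \rho/2]$. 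Since $\Phi < 0$ throughout, this is equivalent to $G(u) \ge 0$, where
\[
G(u) := (1-\rho+2u)(1-2u)\,\Phi(\tfrac{1+\rho}{2}) - (1-\rho)\,\Phi(\tfrac{1}{2}+u).
\]

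The key identity is $G'(\rho/2) = -\psi(\rho)$. Differentiating gives $G'(u) = 2(\rho-4u)\,\Phi(\tfrac{1+\rho}{2}) - (1-\rho)\,\Phi'(\tfrac{1}{2}+u)$; setting $u = \rho/2$, using $\Phi'(\tfrac{1+\rho}{2}) = \ln\tfrac{1+\rho}{2} - \ln\tfrac{1-\rho}{2}$ and expanding $\Phi(\tfrac{1+\rho}{2}) = \tfrac{1+\rho}{2}\ln\tfrac{1+\rho}{2} + \tfrac{1-\rho}{2}\ln\tfrac{1-\rho}{2}$, the coefficient of $\ln\tfrac{1+\rho}{2}$ works out to $-(1+\rho^2)$ and that of $\ln\tfrac{1-\rho}{2}$ to $(1-\rho)^2$, yielding $-\psi(\rho)$ exactly. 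Since $\psi(0) = 0$, $\psi'(0) = 2 - 2\ln 2 > 0$, and $\rho^*$ is defined as the first positive root of $\psi$, one has $\psi \ge 0$ on $[0, \rho^*]$; hence $G'(\rho/2) \le 0$, and together with $G(\rho/2) = 0$ this gives $G \ge 0$ in a left-neighborhood of $\rho/2$.

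The main obstacle is propagating this local fact to the whole interval $[\rho/4, \rho/2]$. For this I would use the sign of the third derivative: $G'''(u) = -(1-\rho)\,\Phi'''(\tfrac{1}{2}+u) < 0$, so $G''$ is strictly decreasing, $G'$ is first convex then concave, and $G'$ has at most two zeros on the interval. Combined with $G'(\rho/2) \le 0$, this forces $G$ to be either non-increasing throughout $[\rho/4, \rho/2]$ (so $G \ge G(\rho/2) = 0$) or else unimodal, first increasing to an interior maximum and then decreasing to $0$ at $\rho/2$. In the latter case the only potential obstruction is the left endpoint, so it remains to verify $G(\rho/4) = (1-\rho/2)^2\,\Phi(\tfrac{1+\rho}{2}) - (1-\rho)\,\Phi(\tfrac{2+\rho}{4}) \ge 0$ on $[0, \rho^*]$ — a scalar inequality handled by an analogous logarithmic expansion, using $\rho^* < 1/2$. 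Equality in the resulting upper bound is attained exactly at $u = \rho/2$, i.e., by dictator functions.
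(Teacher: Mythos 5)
Your reduction is sound up to the last step, and it mirrors the paper's: after imposing $z_1+z_2=0$ you arrive at exactly the function the paper calls $h$ (your $g(u)$ with $u=-\rho z_1\in[\rho/4,\rho/2]$), and your identity $G'(\rho/2)=-\psi(\rho)$ is correct and plays the same role as the paper's observation $\varphi_\rho(-\tfrac12)=\psi(\rho)$, which is where the definition \eqref{eq:psi} of $\rho^*$ enters. (One smaller remark: you invoke Theorem \ref{thm:symmetric} directly at $\alpha=1$; the paper instead applies it for $\Phi_\alpha^{\mathrm{sym}}$, $\alpha\in(1,2)$, and passes to the limit $\alpha\downarrow1$ with a uniform-convergence argument, presumably to avoid the blow-up of $\Phi_1^{\mathrm{sym}}{}'$ at $0,1$ in the KKT analysis behind the theorem. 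Citing the theorem as stated is defensible, but be aware the paper itself is more cautious here.)

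The genuine gap is in the propagation from the local statement at $u=\rho/2$ to all of $[\rho/4,\rho/2]$. From $G'''<0$ you can conclude that $G'$ is concave (equivalently, $G$ is convex-then-concave), so the sign pattern of $G'$ on the interval is negative/nonnegative/negative with possibly empty middle part. But note that $G'(\rho/4)=-(1-\rho)\,\Phi'(\tfrac12+\tfrac\rho4)<0$ always, since $\Phi'(t)=\ln\tfrac{t}{1-t}>0$ for $t>\tfrac12$. Hence the scenario you single out as the only nontrivial one --- $G$ increasing to an interior maximum and then decreasing to $0$ --- can never occur, while the scenario your dichotomy omits --- $G$ decreasing, then increasing, then decreasing to $G(\rho/2)=0$ --- is exactly the one compatible with $G'(\rho/4)<0$, $G'(\rho/2)\le0$ and concave $G'$. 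In that scenario the danger is an interior local minimum of $G$ (at the first zero of $G'$), and this is not controlled by the endpoint check $G(\rho/4)\ge0$ together with $G(\rho/2)=0$: shape information from $G'''<0$ alone cannot exclude a negative dip, since such a $G$ may legitimately have three zeros. So the proof as written does not establish $G\ge0$ on $[\rho/4,\rho/2]$ (and the endpoint inequality $G(\rho/4)\ge0$ is itself only asserted, not proved). What is missing is the quantitative ingredient the paper supplies: it differentiates $h$ itself, writes $h'(z_1)$ as in \eqref{eq:-40} with the bracket $\varphi_\rho$, and proves $\varphi_\rho\ge0$ on $[-\tfrac12,0]$ for $\rho\le\rho^*$ by showing $\varphi_\rho''\ge0$, $\varphi_\rho'(-\tfrac12)\ge0$ (so $\varphi_\rho$ is increasing) and $\varphi_\rho(-\tfrac12)=\psi(\rho)\ge0$; this yields monotonicity of $h$ on the whole interval, not just near the dictator point. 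To salvage your route you would need an argument of comparable strength, e.g.\ showing $G'\le0$ on all of $[\rho/4,\rho/2]$ for $\rho\le\rho^*$, which is essentially the paper's computation in disguise.
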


Samorodnitsky \cite{samorodnitsky2016entropy} proved the Courtade--Kumar
conjecture for $\rho\in[0,\rho_{0}]$, where $0<\rho_{0}<1$ is some
absolute constant. However, the value of $\rho_{0}$ was not explicitly
given in \cite{samorodnitsky2016entropy}, and also it was assumed
to be ``sufficiently small''. In contrast, the value of $\rho^{*}$
in Corollary \ref{cor:CK_Conjecture} is explicitly given. As for
the proof ideas, Samorodnitsky's proof in \cite{samorodnitsky2016entropy}
and our proof in this paper are both based on Fourier analysis (and
also quantitative versions of Friedgut--Kalai--Naor theorems which
are used to further improve the threshold in Section \ref{subsec:Further-Improvement}),
but use it in different ways. For example, different from the linear
programs in \cite{samorodnitsky2016entropy}, the duality for \emph{nonconvex}
programs is used in this paper.

It is worth noting that in \cite{samorodnitsky2016entropy}, the mean
$a$ is not fixed. For this case, we numerically evaluate $\sup_{a\in[0,1/2]}\overline{\Gamma}_{\rho}(a)-\Phi_{1}^{\mathrm{sym}}(a)$,
and observe that this supremum is attained at $a=1/2$ for $0\le\rho\le\rho^{*}$.
This indicates that Statement 1 of Theorem \ref{thm:symmetric} seems
to imply the original Courtade--Kumar conjecture for $0\le\rho\le\rho^{*}$,
i.e., the conjecture that dictator functions maximize the mutual information
$I(f(\mathbf{Y});\mathbf{X})$ over all Boolean functions $f$. It
remains to find a proof for this observation in the future.

We also generalize the corollary above to the case $\alpha\in[1,5]$.
The proof of the following corollary is provided in Section \ref{sec:Proof-of-Theorem-symmetricpower}. 
\begin{cor}
\label{cor:symmetricpower}For $(\rho,\alpha)$ such that $0\le\rho\leq\begin{cases}
\rho^{*} & \alpha\in[1,2)\\
1 & \alpha\in[2,5]
\end{cases},$ $\mathbf{MaxStab}_{\alpha}^{\mathrm{sym}}(\frac{1}{2})$ is attained
by dictator functions. 
\end{cor}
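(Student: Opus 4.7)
The plan is to split the proof into subcases indexed by $\alpha$ and invoke a different tool in each subcase. The anchor points $\alpha=2$ (Witsenhausen) and $\alpha=3$ (Mossel--O'Donnell) are already known for every $\rho\in[0,1]$, so three open intervals remain: $[1,2)$, $(2,3)$, and $(3,5]$.

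For $\alpha\in[1,2)$ with $\rho\le\rho^{*}$: the case $\alpha=1$ is Corollary~\ref{cor:CK_Conjecture}, and $\alpha=2$ is Witsenhausen. Lemma~\ref{lem:BarnesOzgur} tells us that for each fixed $\rho$, the set of $\alpha\ge 1$ for which dictators are optimal is an interval $[\breve{\alpha}_{\min}(\rho),\breve{\alpha}_{\max}(\rho)]$ with $\breve{\alpha}_{\min}\le 2\le\breve{\alpha}_{\max}$. Since for $\rho\le\rho^{*}$ both $\alpha=1$ and $\alpha=2$ lie in this interval, we must have $\breve{\alpha}_{\min}(\rho)=1$, which extends dictator optimality to all of $[1,2]$. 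For $\alpha\in(2,3)$ and any $\rho\in[0,1]$: the function $\Phi_{\alpha}^{\mathrm{sym}}$ has strictly convex derivative on $(0,1/2]$, so Statement 2 of Theorem~\ref{thm:symmetric} directly gives $\mathbf{MaxStab}_{\alpha}^{\mathrm{sym}}(1/2)\le\Phi_{\alpha}^{\mathrm{sym}}((1-\rho)/2)$. Since $\Phi_{\alpha}^{\mathrm{sym}}(0)=\Phi_{\alpha}^{\mathrm{sym}}(1)=0$, the dictator value is exactly $\mathbb{E}[\Phi(T_{\rho}f_{\mathrm d})]=\Phi_{\alpha}^{\mathrm{sym}}((1-\rho)/2)$, so the bound is tight and this case requires nothing more.

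The remaining case $\alpha\in(3,5]$ and $\rho\in[0,1]$ is the technical heart. Here $\Phi_{\alpha}^{\mathrm{sym}}{}'$ is strictly concave on $(0,1/2]$, so Statement 1 of Theorem~\ref{thm:symmetric} applies, yielding $\mathbf{MaxStab}_{\alpha}^{\mathrm{sym}}(1/2)\le\overline{\Gamma}_{\rho}(1/2)$. The ``moreover'' clause lets us restrict to $z_{1}+z_{2}=0$; setting $z_{1}=-z$, $z_{2}=z$ and $u=\rho z$, and using $\Phi(0)=\Phi(1)=0$ together with $\Phi(1/2-u)=\Phi(1/2+u)$, the objective collapses to
\[
F(u):=\frac{(1-\rho)\,\Phi_{\alpha}^{\mathrm{sym}}(1/2-u)}{(1-\rho+2u)(1-2u)},
\]
and the constraints in $\overline{\mathcal{Z}}$ reduce to $u\in[\rho/4,\rho/2]$. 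The dictator corresponds to $u=\rho/2$, where $F(\rho/2)=\Phi_{\alpha}^{\mathrm{sym}}((1-\rho)/2)$, so it suffices to prove that $F(u)\le F(\rho/2)$ on the whole interval.

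The main obstacle is showing this one-variable inequality \emph{simultaneously} for all $\rho\in(0,1)$ and all $\alpha\in(3,5]$. My plan is to show $F$ is nondecreasing on $[\rho/4,\rho/2]$ by analyzing $(\log F)'(u)$, which decomposes into three explicit rational terms plus $\Phi'/\Phi$. Using the closed form $\Phi_{\alpha}^{\mathrm{sym}}(t)=(t^{\alpha}+(1-t)^{\alpha}-1)/(\alpha-1)$ and the substitution $u=\rho/2-v$ to center at the dictator, the inequality becomes a polynomial/rational inequality in $(v,\rho,\alpha)$ on a compact box. The delicate part is that the threshold $\alpha=5$ is expected to be tight: I anticipate that the worst case occurs exactly at $\alpha=5$ with $u$ on the interior, so sharp bounds are needed there; a convexity-in-$\alpha$ argument (monotonicity of $t\mapsto\ln_{\alpha}(t)$ in $\alpha$ was noted in the excerpt) may reduce the problem to the endpoint $\alpha=5$, after which the inequality should become a rational inequality in $(u,\rho)$ verifiable by an explicit factorization.
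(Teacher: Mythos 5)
Your decomposition and your treatment of $\alpha\in[1,2)$ and $\alpha\in(2,3)$ are fine and essentially match the paper: the $[1,2)$ case via Corollary \ref{cor:CK_Conjecture} plus Lemma \ref{lem:BarnesOzgur} is exactly the paper's argument, and for $(2,3)$ your direct appeal to Statement 2 of Theorem \ref{thm:symmetric} is a valid (slightly different) route, since for $a=\frac12$ that statement gives the bound $\Phi_{\alpha}^{\mathrm{sym}}(\frac{1-\rho}{2})$, which is the dictator value by symmetry of $\Phi_{\alpha}^{\mathrm{sym}}$ about $\frac12$ (the appeal to $\Phi_{\alpha}^{\mathrm{sym}}(0)=\Phi_{\alpha}^{\mathrm{sym}}(1)=0$ is not what is needed there, but the conclusion is right). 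The genuine gap is the case $\alpha\in(3,5]$. There you correctly set up the same one-variable reduction as the paper, namely $F(u)=\frac{(1-\rho)\,\Phi_{\alpha}^{\mathrm{sym}}(\frac12-u)}{(1-\rho+2u)(1-2u)}$ on $u\in[\rho/4,\rho/2]$ with the dictator at $u=\rho/2$, but the key inequality $F(u)\le F(\rho/2)$ is only announced as a plan, and the proposed device for handling all $\alpha$ at once is invalid: monotonicity of $\ln_{\alpha}(t)$ in $\alpha$ compares $\Phi_{\alpha}^{\mathrm{sym}}$ and $\Phi_{5}^{\mathrm{sym}}$ pointwise, but it cannot transfer $F_{5}(u)\le F_{5}(\rho/2)$ into $F_{\alpha}(u)\le F_{\alpha}(\rho/2)$ for $\alpha<5$, since both sides of the target inequality move with $\alpha$ in the same direction and no one-sided bound results. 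Proving the inequality directly and uniformly over non-integer $\alpha\in(3,5)$ is much harder than what is required.

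The missing idea is to use Lemma \ref{lem:BarnesOzgur} a second time: for fixed $\rho$, the set of $\alpha$ for which dictators are optimal is an interval containing $2$, so since $\alpha=2$ is Witsenhausen's result, verifying the single endpoint $\alpha=5$ already yields all of $[2,5]$, covering your subcases $(2,3)$ and $(3,5]$ simultaneously. This is exactly how the paper proceeds, and at $\alpha=5$ the computation closes cleanly: with $z=2u\in[\rho/2,\rho]$ the objective becomes a rational function $h(z)$ with polynomial numerator, and one checks $h'(z)=\frac{5\varphi(z)}{16(1-\rho+z)^{2}}$ with $\varphi'(z)=-2(1-\rho)(3z+1)(1-\rho+z)\le0$, hence $\varphi(z)\ge\varphi(\rho)=(1-\rho)^{3}\rho\ge0$, so $h$ is nondecreasing and its supremum is the dictator value $h(\rho)=\Phi(\frac{1-\rho}{2})$. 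Until you either carry out such an explicit verification for every $\alpha\in(3,5]$ or replace your monotonicity-in-$\alpha$ heuristic by the rigorous interval reduction of Lemma \ref{lem:BarnesOzgur}, the corollary remains unproven on $(3,5]$.
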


Corollary \ref{cor:symmetricpower} resolves the Mossel--O'Donnell
conjecture for $\alpha\in[2,5]$. However, our result for cases $\alpha=2,3$
is not new. The case $\alpha=2$ was resolved by Witsenhausen \cite{witsenhausen1975sequences}
by using the tensorization property of maximal correlation, and the
case $\alpha=3$ was resolved by Mossel and O'Donnell \cite{mossel2005coin}
by reducing the case $\alpha=3$ to the case $\alpha=2$. Note that
Mossel and O'Donnell's method seems difficult to extend to the case
$\alpha>3$. Moreover, the case $\alpha\in[2,3]$ can be also obtained
by combining Mossel and O'Donnell's result and Lemma \ref{lem:BarnesOzgur}.
Our Corollary \ref{cor:symmetricpower}, proven by using Fourier analysis,
is a generalization of Witsenhausen's and Mossel--O'Donnell's results.
Moreover, combining Corollary \ref{cor:symmetricpower} and the counterexample
example for $\alpha=10$ in \cite{mossel2005coin} yields that the
estimation of $\breve{\alpha}_{\max}$ is improved to $5\le\breve{\alpha}_{\max}\leq10$.

\subsection{Asymmetric $\Phi$}

We next consider asymmetric $\Phi$. Define\footnote{Indeed, the range of $z_{2}$ can be further restricted to $\frac{1-a}{2-\rho}\le z_{2}\le1-a$.
For simplicity, we only restrict $0\le z_{2}\le1-a$, which is sufficient
to show our result on the asymmetric $\alpha$-stability in Corollary
\ref{cor:asymmetricpower}. } 
\[
\widetilde{\Gamma}_{\rho}(a)=\sup_{0\le z_{2}\le1-a}(1-a-p)\Phi(0)+p\Phi(a+\rho z_{1})+a\Phi(a+\rho z_{2}),
\]
where 
\begin{align}
z_{1} & =\frac{z_{2}(\rho(1-z_{2})-a)}{1-a-\rho z_{2}},\label{eq:z1}\\
p & =\frac{a(1-a-\rho z_{2})^{2}}{a+\rho^{2}z_{2}-(a+\rho z_{2})^{2}}.\label{eq:p}
\end{align}
Then following proof steps similar to those of Theorem \ref{thm:symmetric},
we have the following result. The proof is omitted. 
\begin{thm}
\label{thm:derivativeconcave} If $\Phi$ is continuous on $[0,1]$
and differentiable on $(0,1)$ whose derivative $\Phi'$ is increasing,
continuous, and strictly concave on $(0,1)$, then for $a,\rho\in(0,1)$,
\begin{align}
 & \mathbf{MaxStab}_{\Phi}(a)\leq\Lambda_{\rho}(a)=\widetilde{\Gamma}_{\rho}(a).\label{eq:-9}
\end{align}
\end{thm}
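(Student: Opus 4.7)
The plan is to mimic the proof of Theorem \ref{thm:symmetric}. By Theorem \ref{thm:generalbound} together with Proposition \ref{prop:property}, $\mathbf{MaxStab}_{\Phi}(a)\le\Lambda_{\rho}(a)$, and $\Lambda_{\rho}(a)$ is attained by a finitely supported joint distribution $P_{SZ}$ on $\mathcal{S}\times\mathbb{R}$ with $|\mathrm{supp}(P_{Z|S=s})|\le 3$ for each $s$ and with the constraint \eqref{eq:con3-5} active (Statements 1 and 2 of Proposition \ref{prop:property}). Since LICQ holds (Statement 4 of the same proposition), the KKT conditions apply: with multipliers $\mu_s$ for \eqref{eq:con2-5}, $\lambda\in\mathbb{R}$ for \eqref{eq:-47}, and $\nu\ge 0$ for \eqref{eq:con3-5}, stationarity in $z_{s,i}$ at an interior value $z_{s,i}\in(-a/\rho,(1-a)/\rho)$ reads
\[
\rho\,\Phi'(a+\rho z_{s,i}) \;=\; \lambda + \nu\,(2 z_{s,i}-s).
\]

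Strict concavity of $\Phi'$ now enters exactly as in Theorem \ref{thm:symmetric}: the map $z\mapsto\rho\,\Phi'(a+\rho z)$ is strictly concave while $z\mapsto\lambda+\nu(2z-s)$ is affine, so for each fixed $s\in\mathcal{S}$ the displayed stationarity equation admits at most two interior roots. A perturbation/exchange argument in the spirit of the proof of Theorem \ref{thm:symmetric}, exploiting that $\Phi'$ is strictly concave and increasing on $(0,1)$, then collapses the support further: all the mass of $S=1-a$ concentrates on a single interior atom $z_2\in(0,1-a)$, while $S=-a$ splits between the lower boundary $z=-a/\rho$ (corresponding to $T_\rho f(\mathbf{X})=0$) and a single interior atom $z_1$. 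The upper boundary $z=(1-a)/\rho$ (i.e.\ $T_\rho f(\mathbf{X})=1$) carries no mass, because redistributing mass from $1$ into the interior while preserving the first two moments strictly increases $\mathbb{E}[\Phi(T_\rho f(\mathbf{X}))]$ under strict concavity of $\Phi'$.

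With this three-atom structure in place, the constraints $\sum_i p_{s,i}=P_S(s)$, $\mathbb{E}[Z]=0$, and $\mathbb{E}[Z^2]=\mathbb{E}[SZ]$ are linear in the remaining probabilities and in $z_1$ once $z_2$ is fixed, and a short calculation recovers precisely the expressions \eqref{eq:z1} and \eqref{eq:p}. Supremizing over the single remaining parameter $z_2\in[0,1-a]$ then delivers $\Lambda_\rho(a)=\widetilde{\Gamma}_\rho(a)$, which is \eqref{eq:-9}. The hard part will be the support-reduction step: KKT only bounds by two the number of interior atoms per $s$, so one still has to eliminate the second interior atom in each $s$-fibre and also exclude any mass at $z=(1-a)/\rho$. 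I would handle both by constructing explicit first- and second-moment-preserving perturbations (merging two nearby interior atoms, or moving an upper-boundary atom inward while rebalancing against the lower-boundary atom), checking they remain feasible for the inequality constraint \eqref{eq:con3-5} (which is active by Statement 2 of Proposition \ref{prop:property}), and using strict concavity of $\Phi'$ to certify a strict gain in the objective.
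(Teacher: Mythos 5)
Your overall route---Theorem \ref{thm:generalbound} plus Proposition \ref{prop:property}, then KKT stationarity with strict concavity of $\Phi'$ limiting the interior stationary points per fibre---is exactly the route the paper intends (it omits the proof as ``similar to Theorem \ref{thm:symmetric}''). But the support-reduction step, which you yourself flag as the hard part, has a genuine hole: you exclude mass only at the upper boundary $z=(1-a)/\rho$, and you never exclude mass of the fibre $S=1-a$ at the \emph{lower} boundary $z=-a/\rho$. Without that, the optimizer could a priori be a four-atom law in which both fibres place mass at $T_{\rho}f=0$ (each fibre $=$ lower boundary $+$ one interior atom), which is not of the shape parametrized by $\widetilde{\Gamma}_{\rho}(a)$, where the whole mass $a$ of the $S=1-a$ fibre sits on the single interior atom $z_{2}$. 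This case must be killed explicitly, and it can be: if the $S=1-a$ fibre had an atom at $-a/\rho<(1-a)/\rho$, Statement 3 of Proposition \ref{prop:property} (applied with $z_{1-a,j}=-a/\rho$) would force the $S=-a$ fibre to have no interior atoms, so---granting your exclusion of upper-boundary mass---$Z=-a/\rho$ on $\{S=-a\}$; then $\mathbb{E}Z=0$ gives $\mathbb{E}[Z\,1\{S=1-a\}]=(1-a)a/\rho$ and hence $\mathbb{E}[SZ]=a(1-a)/\rho>a(1-a)$, contradicting $\mathbb{E}[Z^{2}]\le\mathbb{E}[SZ]$ combined with Cauchy--Schwarz, which forces $\mathbb{E}[SZ]\le\mathbb{E}[S^{2}]=a(1-a)$. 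Your write-up contains no such step, so the identification of the optimizer with the three-atom family, hence $\Lambda_{\rho}(a)=\widetilde{\Gamma}_{\rho}(a)$, does not yet follow.

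Two smaller defects. The perturbation you propose for removing a second interior atom---``merging two nearby interior atoms'' while preserving the first two moments---cannot exist: collapsing two distinct atoms of a fibre strictly decreases its conditional second moment, so no moment-preserving merge is available. Fortunately the step is unnecessary: with $\Phi'$ strictly concave on all of $(0,1)$ the function $g(s,\cdot)$ is strictly concave, its two possible interior zeros are adjacent and thus carry different values of the antiderivative $G(s,\cdot)$, and the stationarity in $p_{s,i}$ (all support points of a fibre share one $G$-value) already forbids two interior atoms per fibre---the paper's own argument from Theorem \ref{thm:symmetric}, simpler here because no third root can occur. Finally, you assert without argument that the remaining parameter may be restricted to $z_{2}\in[0,1-a]$ and that every member of that family is feasible for $\Lambda_{\rho}(a)$ (i.e.\ $0\le p\le 1-a$ and $z_{1}\ge-a/\rho$ with \eqref{eq:con3-5} holding); both checks are routine but are needed for the stated equality rather than a one-sided bound.
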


\begin{rem}
\label{rem:convex}If $\Phi'$ is increasing, continuous, and strictly
convex, then by redefining $\tilde{\Phi}:t\in[0,1]\mapsto\Phi(1-t)$
and substituting $a\leftarrow1-a$, \eqref{eq:-9} still holds. 
\end{rem}

We now consider the asymmetric $\alpha$-stability. Denote $\theta(\alpha)$
as the solution to 
\begin{equation}
\theta^{2-\alpha}+\frac{1}{\alpha}(1-\theta)=1\label{eq:theta-1}
\end{equation}
with unknown $\theta$ for given $\alpha\in(1,2)$. Then the following
holds, whose proof is provided in Section \ref{sec:Proof-of-Theorem-asymmetricpower}. 
\begin{cor}
\label{cor:asymmetricpower}For $(\rho,\alpha)$ such that $0\le\rho\leq\begin{cases}
\frac{1-\theta(\alpha)}{1+\theta(\alpha)} & \alpha\in(1,2)\\
1 & \alpha\in[2,3]
\end{cases},$ $\mathbf{MaxStab}_{\alpha}(\frac{1}{2})$ is attained by dictator
functions. 
\end{cor}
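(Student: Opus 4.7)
The plan is to apply Theorem~\ref{thm:derivativeconcave} (and its counterpart from Remark~\ref{rem:convex}) to $\Phi = \Phi_\alpha$ with $a = 1/2$, and then to reduce the resulting upper bound $\widetilde{\Gamma}_\rho(\frac{1}{2})$ to a one-variable optimization whose boundary point corresponds to a dictator function. For $\alpha \in (1,2)$, I would first verify the hypothesis by computing $\Phi_\alpha''(t) = \alpha t^{\alpha-2} > 0$ and $\Phi_\alpha'''(t) = \alpha(\alpha-2)t^{\alpha-3} < 0$, so $\Phi_\alpha'$ is increasing and strictly concave on $(0,1)$, whence Theorem~\ref{thm:derivativeconcave} gives $\mathbf{MaxStab}_\alpha(\frac{1}{2}) \leq \widetilde{\Gamma}_\rho(\frac{1}{2})$.

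Next I would introduce the substitution $v := 1/2 + \rho z_2$ and $w := 1/2 + \rho z_1$, under which \eqref{eq:z1} and \eqref{eq:p} simplify to $w(v) = ((1-\rho)/2 + v(\rho-v))/(1-v)$ and $p = (1-v)/(2w)$, and the one-variable problem becomes $\widetilde{\Gamma}_\rho(\frac{1}{2}) = \sup_{v \in [1/2,(1+\rho)/2]} g(v)$ with
\[
g(v) := \frac{(1-v)\,\Phi_\alpha(w(v))}{2\,w(v)} + \frac{\Phi_\alpha(v)}{2}.
\]
The right endpoint $v_0 := (1+\rho)/2$ gives $w_0 := w(v_0) = (1-\rho)/2$ and $p = 1/2$, so $g(v_0) = \frac{1}{2}\Phi_\alpha(\frac{1-\rho}{2}) + \frac{1}{2}\Phi_\alpha(\frac{1+\rho}{2})$ is exactly the $\Phi_\alpha$-stability of a dictator. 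A direct computation using $w'(v_0) = -(1+\rho)/(1-\rho)$ then yields
\[
2(\alpha-1)(1-\rho)\, g'(v_0) = \alpha(1-\rho)\,v_0^{\alpha-1} - \bigl(\alpha(1+\rho) - 2\rho\bigr)\,w_0^{\alpha-1},
\]
so with $r := v_0/w_0 = (1+\rho)/(1-\rho)$ the condition $g'(v_0) \geq 0$ reduces to $\alpha r^{\alpha-1} \geq (\alpha-1)r + 1$; setting $\theta := 1/r = (1-\rho)/(1+\rho)$, this is precisely $\theta^{2-\alpha} + (1-\theta)/\alpha \geq 1$, matching \eqref{eq:theta-1} exactly at the boundary $\rho = (1-\theta(\alpha))/(1+\theta(\alpha))$ of the claimed range.

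For $\alpha \in [2,3]$, the case $\alpha = 2$ is Witsenhausen's classical result, and for $\alpha \in (2,3]$ I would apply Remark~\ref{rem:convex} (since $\Phi_\alpha''' > 0$ makes $\Phi_\alpha'$ strictly convex) with $\tilde{\Phi}_\alpha(t) := \Phi_\alpha(1-t)$ to obtain the same type of upper bound on $\mathbf{MaxStab}_\alpha(\frac{1}{2})$. The same reduction produces a boundary-derivative condition $F_\alpha(r) := \alpha r^{\alpha-1} - (\alpha-1)r - 1 \geq 0$, which this time is automatic: $F_\alpha(1) = 0$ and $F_\alpha'(r) = (\alpha-1)(\alpha r^{\alpha-2} - 1) \geq 0$ for $r \geq 1$ whenever $\alpha \geq 2$, so the condition holds for every $\rho \in [0,1)$. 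An alternative for this range is to combine Witsenhausen's $\alpha = 2$ result with Lemma~\ref{lem:BarnesOzgur} and establish the claim at a single $\alpha \in (2,3]$ to interpolate across the whole interval.

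The main obstacle I anticipate is the upgrade from the boundary first-order condition $g'(v_0) \geq 0$ to the global statement $\max_{v \in [1/2, v_0]} g(v) = g(v_0)$, since $g$ is not manifestly concave and spurious interior stationary points of \eqref{eq:generalboundsimple} could in principle compete. I would handle this via the KKT machinery underlying Proposition~\ref{prop:property}: the support-size bound $|\mathrm{supp}(P_{Z|S=s})| \leq 3$, the active constraint $\mathbb{E}[Z^2] = \mathbb{E}[SZ]$ in part~(2), the separation $z_{1-a,j} - z_{-a,i} \geq 1/2$ in part~(3), and LICQ in part~(4) together constrain the form of any optimal $P_{Z|S}$ enough to enumerate candidate stationary points, and I would then show that within the claimed range of $\rho$ the dictator configuration $(z_1, z_2, p) = (-1/2, 1/2, 1/2)$ is the unique one attaining the supremum.
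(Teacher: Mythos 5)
Your reduction is set up correctly and your endpoint computation is right: applying Theorem~\ref{thm:derivativeconcave} (resp.\ Remark~\ref{rem:convex}) to $\Phi_\alpha$ at $a=\tfrac12$, the substitution $v=\tfrac12+\rho z_2$ does give $w(v)=\bigl(\tfrac{1-\rho}{2}+v(\rho-v)\bigr)/(1-v)$, $p=(1-v)/(2w)$, and the condition $g'(v_0)\ge 0$ at $v_0=\tfrac{1+\rho}{2}$ is indeed equivalent to $\theta^{2-\alpha}+\tfrac{1-\theta}{\alpha}\ge 1$ with $\theta=\tfrac{1-\rho}{1+\rho}$, which reproduces the threshold in \eqref{eq:theta-1}. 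The problem is that this is only a first-order condition at one endpoint, and the step you yourself flag as the ``main obstacle'' --- upgrading it to $\sup_{v}g(v)=g(v_0)$ --- is precisely where essentially all of the work in the paper's proof lies, and your proposed way of closing it does not work. Re-invoking Proposition~\ref{prop:property} (support sizes, active constraint $\mathbb{E}[Z^2]=\mathbb{E}[SZ]$, the $\tfrac12$-separation, LICQ) is circular: those facts have already been consumed in deriving Theorem~\ref{thm:derivativeconcave}, i.e.\ in reducing the problem to the one-parameter family you are now optimizing, so they impose no further constraints on interior critical points of $g$. What is needed is a direct global analysis of the reduced objective. In the paper this is done by parametrizing by $p$ (discarding the inferior branch $(z_1^{(2)},z_2^{(2)})$ by convexity) and proving that $h_\alpha(p)$ is non-decreasing on the entire interval $\bigl[\tfrac{1-\rho}{2-2\rho+\rho^2},\tfrac12\bigr]$: the sign of $h_\alpha'$ is controlled by $\varphi(\alpha,p)$ in \eqref{eq:phi}, one uses concavity of $\varphi(\cdot,p)$ in $\alpha$ together with $\varphi(1,p)=0$, and then Lemma~\ref{lem:-1} (monotonicity of the second root $\alpha^*(p)$, proved via the implicit function theorem plus computer-verified polynomial inequalities $F(p)\le 0$, $F(p)^2\ge E(p)^2T(p)^2$, $\phi(2,p)\ge 0$) to show $\sup_p\alpha^*(p)=\alpha^*(\tfrac12)=\theta^{-1}\bigl(\tfrac{1-\rho}{1+\rho}\bigr)$. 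None of this substantive content appears in your proposal, so for $\alpha\in(1,2)$ the argument is incomplete.

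For $\alpha\in[2,3]$ the same gap is present but is much easier to fill: as in the paper, Lemma~\ref{lem:BarnesOzgur} reduces the interval to the single value $\alpha=3$, and there the global statement follows from an elementary monotonicity argument (with $z=2\rho z_2$, one shows $\varphi'(z)\le 0$ on $[0,\rho]$, hence $\varphi(z)\ge\varphi(\rho)=(1-\rho)^3\rho\ge 0$, hence $h'\ge 0$), not merely from the endpoint condition $F_\alpha(r)\ge 0$ that you check. So your $[2,3]$ half is close to complete modulo carrying out that cubic analysis, but the $(1,2)$ half is missing its core: a proof that $g$ (equivalently $h_\alpha$) has no interior point beating the dictator configuration throughout the claimed range of $\rho$.
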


This result partially resolves the Li--Médard conjecture for the
case of $0\le\rho\leq\frac{1-\theta(\alpha)}{1+\theta(\alpha)},\alpha\in(1,2)$.
This region is plotted in Fig. \ref{fig:The-region-of}. The question
whether $\mathbf{MaxStab}_{\alpha}(\frac{1}{2})$ is attained by dictator
functions for the case $\alpha=3$ was posed in \cite{li2019boolean}.
Our Corollary \ref{cor:asymmetricpower} gives a positive answer to
this question. Combining Corollary \ref{cor:asymmetricpower} and
the counterexample for $\alpha=10$ in \cite{mossel2005coin} implies
$3\le\alpha_{\max}\leq10$.

\begin{figure}
\centering \includegraphics[scale=0.7]{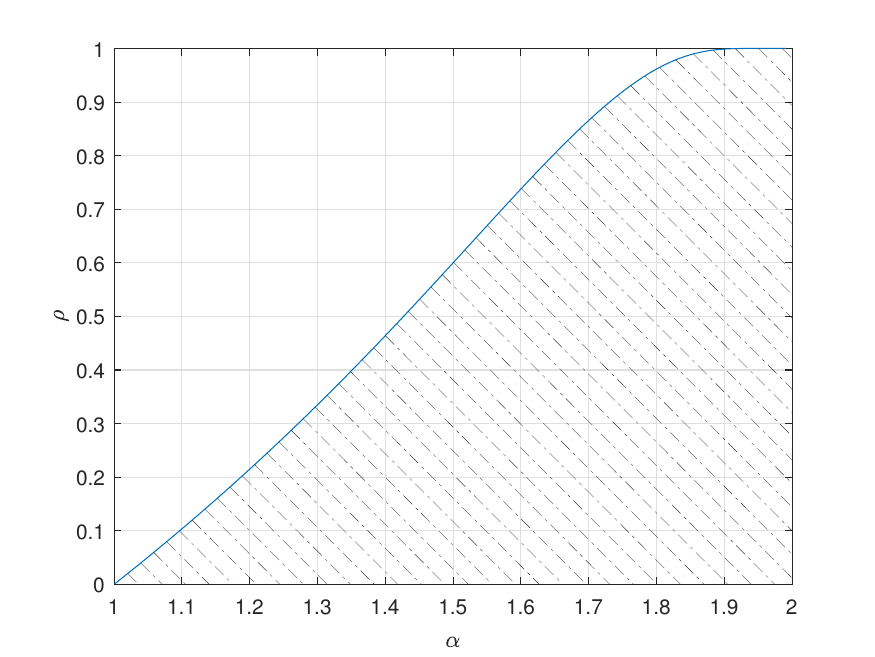}

\caption{\label{fig:The-region-of}The region of $0\le\rho\protect\leq\frac{1-\theta(\alpha)}{1+\theta(\alpha)}$.}
\end{figure}

\section{Further Improvement of Our Results}

In this section, we further improve our bounds on the maximal $\Phi$-stability
by applying an improved version of the Friedgut--Kalai--Naor (FKN)
theorem \cite{friedgut2002boolean}. Hence, to this end, we improve
the FKN theorem first.

\subsection{\label{sec:Improvements}Improvements of FKN Theorem}

The FKN theorem concerns about which Boolean functions $f$ on the
discrete cube have Fourier coefficients concentrated at the lowest
two levels. It states that such Boolean functions are close to either
a constant function (i.e., $f=0$ or $1$) or a dictator function
($f=1\{x_{i}=1\}$ or $1\{x_{i}=-1\}$). Here we aim at improving
the FKN theorem by focusing on the class of Boolean functions with
a given mean. For this case, the FKN theorem can be also formulated
as a theorem about maximizing the first-order Fourier weight of a
Boolean function given the maximum of its correlations to all dictator
functions. We next provide the formulation.

For $a,\beta\in2^{-n}[0:2^{n}]$, define

\begin{equation}
W^{(n)}(a,\beta):=\max_{f:\{-1,1\}^{n}\to\{0,1\}:\mathbb{E}f=a,\max_{i\in[n]}|\hat{f}_{\{i\}}|=\beta}\mathbf{W}_{1}[f].\label{eq:W}
\end{equation}
The quantity $W^{(n)}(a,\beta)$ was implicitly studied by Friedgut,
Kalai, and Naor \cite{friedgut2002boolean} who showed that for $a=\frac{1}{2}$,
$W(\frac{1}{2},\beta)\to\frac{1}{4}$ if and only if $\beta\to\frac{1}{2}$.

Similarly, we define 
\[
W^{(n)}(a):=\max_{f:\{-1,1\}^{n}\to\{0,1\}:\mathbb{E}f=a}\mathbf{W}_{1}[f].
\]
Here without ambiguity, we use the same notation $W^{(n)}$ but with
different numbers of parameters to denote two different functions
$W^{(n)}(a,\beta)$ and $W^{(n)}(a)$. We first use $W^{(n)}(a)$
to bound $W^{(n)}(a,\beta)$ in the following lemma. The proof of
Lemma \ref{lem:W-W} is provided in Section \ref{sec:Proof-of-Lemma}. 
\begin{lem}
\label{lem:W-W} For any Boolean function $f:\{-1,1\}^{n}\to\{0,1\}$,
we have 
\begin{align}
\mathbf{W}_{1}[f] & \leq\beta^{2}+\left(\sqrt{W^{(n)}(a)-a^{2}}+\sqrt{W^{(n-1)}(a-\beta)}\right)^{2}\label{eq:-58}
\end{align}
where $a:=\mathbb{E}f$ and $\beta:=|\hat{f}_{\{1\}}|$. In particular,
for balanced Boolean functions (i.e., $a=\frac{1}{2}$), we have 
\begin{equation}
\mathbf{W}_{1}[f]\le\beta^{2}+W^{(n-1)}(\frac{1}{2}-\beta).\label{eq:-20-4}
\end{equation}
Moreover, for any possible values of $\beta$ such that there exists
at least one Boolean function with $\hat{f}_{\{1\}}=\beta$, equality
in \eqref{eq:-20-4} is attained by some balanced Boolean function. 
\end{lem}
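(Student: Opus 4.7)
The plan is to decompose $f$ based on the first coordinate, apply Minkowski's inequality in Fourier space, and bound the resulting terms using $W^{(n)}(a)$ on the full cube and $W^{(n-1)}(a-\beta)$ on the sub-cube. Without loss of generality assume $\hat{f}_{\{1\}}=\beta\ge 0$ (replace $x_1$ by $-x_1$ otherwise), and write $f(x_1,\mathbf{x}')=\tfrac{1+x_1}{2}f_{+}(\mathbf{x}')+\tfrac{1-x_1}{2}f_{-}(\mathbf{x}')$ with $f_{\pm}:\{-1,1\}^{n-1}\to\{0,1\}$ Boolean of means $a\pm\beta$. For each $i\ge 2$, the standard identities give $\hat{f}_{\{i\}}=\tfrac{1}{2}(\hat{f_{+}}_{\{i\}}+\hat{f_{-}}_{\{i\}})=\hat{f_{-}}_{\{i\}}+\hat{f}_{\{1,i\}}$ and $\hat{f}_{\{1,i\}}=\tfrac{1}{2}(\hat{f_{+}}_{\{i\}}-\hat{f_{-}}_{\{i\}})$.

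I would then apply Minkowski's inequality in $\ell^{2}$ to $\sum_{i\ge 2}\hat{f}_{\{i\}}^{2}=\mathbf{W}_1[f]-\beta^{2}$, obtaining
\[
\sqrt{\mathbf{W}_1[f]-\beta^{2}}\le\sqrt{\mathbf{W}_1[f_{-}]}+\sqrt{\sum_{i\ge 2}\hat{f}_{\{1,i\}}^{2}}.
\]
The first term satisfies $\mathbf{W}_1[f_{-}]\le W^{(n-1)}(a-\beta)$ since $f_{-}$ is Boolean on $n-1$ variables with mean $a-\beta$, supplying the second square-root in the claimed bound. For the remaining term I would use the auxiliary Boolean function $F(x_1,\mathbf{x}'):=f(x_1,x_1\mathbf{x}')$ on $\{-1,1\}^{n}$; a measure-preserving change of variables shows that $F$ is Boolean with $\mathbb{E}F=a$, $\hat{F}_{\{1\}}=\beta$, and $\hat{F}_{\{i\}}=\hat{f}_{\{1,i\}}$ for $i\ge 2$, so that $\mathbf{W}_1[F]=\beta^{2}+\sum_{i\ge 2}\hat{f}_{\{1,i\}}^{2}\le W^{(n)}(a)$.

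The hard part will be passing from the bound $\sum_{i\ge 2}\hat{f}_{\{1,i\}}^{2}\le W^{(n)}(a)-\beta^{2}$ obtained this way to the sharper $\le W^{(n)}(a)-a^{2}$ required by the lemma. I expect to close this gap either by constructing a different auxiliary function whose effective first-level bias is boosted to $a$ rather than $\beta$ (for example, by mixing in a coordinate flip), or by coupling the two estimates on $\mathbf{W}_1[f_{-}]$ and $\sum_{i\ge 2}\hat{f}_{\{1,i\}}^{2}$ inside an optimized Cauchy--Schwarz step that extracts the additional $a^{2}-\beta^{2}$ slack from the Boolean-valuedness of $f_{\pm}$. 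In the balanced case $a=\tfrac{1}{2}$, the extra term vanishes because $W^{(n)}(\tfrac{1}{2})-\tfrac{1}{4}=0$, which is why the bound reduces cleanly to $\beta^{2}+W^{(n-1)}(\tfrac{1}{2}-\beta)$ there.

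Finally, to attain equality in \eqref{eq:-20-4}, I would exhibit an explicit extremizer. Take $g:\{-1,1\}^{n-1}\to\{0,1\}$ of mean $\tfrac{1}{2}-\beta$ with $\mathbf{W}_1[g]=W^{(n-1)}(\tfrac{1}{2}-\beta)$, define $g^{*}(\mathbf{x}'):=1-g(-\mathbf{x}')$ (Boolean of mean $\tfrac{1}{2}+\beta$ with $\hat{g^{*}}_{\{i\}}=\hat{g}_{\{i\}}$), and set $f(x_1,\mathbf{x}'):=\tfrac{1+x_1}{2}g^{*}(\mathbf{x}')+\tfrac{1-x_1}{2}g(\mathbf{x}')$. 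A direct computation then gives $\mathbb{E}f=\tfrac{1}{2}$, $\hat{f}_{\{1\}}=\beta$, $\hat{f}_{\{1,i\}}=0$, and $\hat{f}_{\{i\}}=\hat{g}_{\{i\}}$ for all $i\ge 2$, hence $\mathbf{W}_1[f]=\beta^{2}+W^{(n-1)}(\tfrac{1}{2}-\beta)$, matching the bound.
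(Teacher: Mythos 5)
Your decomposition into the slices $f_{\pm}$, the Minkowski step, and your equality construction $f=\tfrac{1+x_1}{2}g^{*}+\tfrac{1-x_1}{2}g$ are all correct and closely parallel the paper's argument (the paper's extremizer is essentially the same antisymmetrized function). But the proof as written has a genuine gap at exactly the point you flag as "the hard part," and that point is the heart of the lemma. Your auxiliary function $F(x_1,\mathbf{x}')=f(x_1,x_1\mathbf{x}')$ only yields $\sum_{i\ge2}\hat{f}_{\{1,i\}}^{2}\le W^{(n)}(a)-\beta^{2}$, so what you have actually proved is
\[
\mathbf{W}_{1}[f]\le\beta^{2}+\Bigl(\sqrt{W^{(n)}(a)-\beta^{2}}+\sqrt{W^{(n-1)}(a-\beta)}\Bigr)^{2},
\]
which is strictly weaker than \eqref{eq:-58} whenever $\beta<a$. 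Your remark that the balanced case then follows "because $W^{(n)}(\tfrac12)-\tfrac14=0$" is circular: that cancellation belongs to the target bound with $a^{2}$, not to the bound you obtained with $\beta^{2}$. From your estimate at $a=\tfrac12$ one only gets $\sqrt{\mathbf{W}_{1}[f]-\beta^{2}}\le\sqrt{W^{(n-1)}(\tfrac12-\beta)}+\sqrt{\tfrac14-\beta^{2}}$, which does not imply \eqref{eq:-20-4}; and \eqref{eq:-20-4} is precisely the part of the lemma used later in the paper.

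The missing idea is the "bias-boosting" device you conjectured but did not construct. The paper does not work with the restriction $f_{+}$ (mean $a+\beta$ on the slice); instead it pads the part of $\supp f$ inside $\{x_1=1\}$ up to a set $\mathcal{A}\subseteq\{x_1=1\}$ of measure exactly $a$, writes $f=g-h\cdot1\{x_1=1\}+l\cdot1\{x_1=-1\}$ with $g=1_{\mathcal{A}}$ and with $h,l$ Boolean on $\{-1,1\}^{n-1}$ of mean $a-\beta$, and exploits that $\hat{g}_{\{1\}}=a$ (since $\mathcal{A}$ lies entirely in the slice $x_1=1$), so that $\sum_{i\ge2}\hat{g}_{\{i\}}^{2}=\mathbf{W}_{1}[g]-a^{2}\le W^{(n)}(a)-a^{2}$, while the correction terms enter Minkowski with weight $\tfrac12$ each and contribute $\sqrt{W^{(n-1)}(a-\beta)}$. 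This is what produces the $a^{2}$ (rather than $\beta^{2}$) subtraction and makes the first square root vanish at $a=\tfrac12$. Until you supply such a construction (or an alternative argument extracting the extra $a^{2}-\beta^{2}$), the general bound \eqref{eq:-58} and the balanced bound \eqref{eq:-20-4} remain unproved; only your attainment claim for \eqref{eq:-20-4} is complete as stated.
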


\begin{rem}
\label{rem:-implies-that} \eqref{eq:-58} implies that $W^{(n)}(a,\beta)$
is no larger than the RHS of \eqref{eq:-58}, but \eqref{eq:-58}
is indeed stronger than this conclusion in the sense that given $\beta$,
\eqref{eq:-58} holds for any Boolean functions $f$ such that there
is at least one $i$ satisfying $|\hat{f}_{\{i\}}|=\beta$, without
requiring other $j\in[n]\backslash\{i\}$ satisfying $|\hat{f}_{\{j\}}|\le\beta$. 
\end{rem}

In fact, the quantity $W^{(n)}(a)$ was relatively well studied in
the literature. Define 
\[
\phi(t):=\begin{cases}
2t^{2}\ln\frac{1}{t} & 0<t\leq\frac{1}{4}\\
2t^{2}(\frac{1}{\sqrt{t}}-1) & 0<t\leq\frac{1}{4}\\
\frac{t}{2} & \frac{1}{4}\leq t\leq\frac{1}{2}
\end{cases}.
\]
Define for $0\leq t\leq1$, 
\[
\varphi(t):=\phi(\min\{t,1-t\}).
\]
It is known that \cite{O'Donnell14analysisof,fu2001minimum,yu2019improved}
for any $f:\{-1,1\}^{n}\to\{0,1\}$ with $\mathbb{E}f=a\in[0,1]$,
\begin{equation}
W^{(n)}(a)\leq\varphi(a).\label{eq:W-phi}
\end{equation}
The bounds $\frac{t}{2}$ and $2t^{2}(\frac{1}{\sqrt{t}}-1)$ were
respectively proven by Fu, Wei, and Yeung \cite{fu2001minimum} and
the present author and Tan \cite{yu2019improved}, by using linear
programming methods (together with MacWilliams--Delsarte identities).
The bound $2t^{2}\ln\frac{1}{t}$ is called \emph{Chang's bound} \cite{chang2002polynomial},
which can be proven by several methods, e.g., by hypercontractivity
inequalities \cite{O'Donnell14analysisof}.

Combining Lemma \ref{lem:W-W} and \eqref{eq:W-phi} yields the following
result. 
\begin{prop}
\label{prop:ImprovedFKN}For $0\le\beta\le a\le\frac{1}{2}$, 
\begin{align}
W^{(n)}(a,\beta) & \leq\beta^{2}+\big(\sqrt{\varphi(a)-a^{2}}+\sqrt{\varphi(a-\beta)}\big)^{2}.\label{eq:-71}
\end{align}
In particular, for $0\le\beta\le a=\frac{1}{2}$, 
\begin{equation}
W^{(n)}(\frac{1}{2},\beta)\leq\beta^{2}+\varphi(\frac{1}{2}-\beta).\label{eq:-20-4-1}
\end{equation}
\end{prop}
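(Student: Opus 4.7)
The plan is to obtain Proposition \ref{prop:ImprovedFKN} as a direct corollary of Lemma \ref{lem:W-W} combined with the bound $W^{(n)}(a) \leq \varphi(a)$ stated in \eqref{eq:W-phi}. First, I would fix any Boolean function $f:\{-1,1\}^n\to\{0,1\}$ with $\mathbb{E}f=a$ and $\max_{i\in[n]}|\hat{f}_{\{i\}}|=\beta$. The quantity $\mathbf{W}_1[f]=\sum_i\hat{f}_{\{i\}}^2$ is invariant under permutations of coordinates, so after relabelling we may assume that the maximum among $|\hat{f}_{\{i\}}|$ is achieved at $i=1$, i.e.\ $|\hat{f}_{\{1\}}|=\beta$. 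This relabelling places us in the exact setting of Lemma \ref{lem:W-W} (see also Remark \ref{rem:-implies-that}, which makes explicit that inequality \eqref{eq:-58} needs only $|\hat{f}_{\{1\}}|=\beta$ and no constraint on the other coordinates).

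Next, I would apply Lemma \ref{lem:W-W} to obtain
\[
\mathbf{W}_1[f]\le\beta^2+\left(\sqrt{W^{(n)}(a)-a^2}+\sqrt{W^{(n-1)}(a-\beta)}\right)^2.
\]
Under the hypothesis $0\le\beta\le a\le\tfrac12$, both arguments $a$ and $a-\beta$ lie in $[0,\tfrac12]\subseteq[0,1]$, so \eqref{eq:W-phi} yields $W^{(n)}(a)\le\varphi(a)$ and $W^{(n-1)}(a-\beta)\le\varphi(a-\beta)$. Substituting and using the monotonicity of $t\mapsto\sqrt{t}$ together with the monotonicity of $(u,v)\mapsto(u+v)^2$ on the nonnegative orthant yields
\[
\mathbf{W}_1[f]\le\beta^2+\left(\sqrt{\varphi(a)-a^2}+\sqrt{\varphi(a-\beta)}\right)^2,
\]
which is \eqref{eq:-71} after taking the supremum over all admissible $f$. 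For the specialized statement at $a=\tfrac12$, I would instead invoke the sharper form \eqref{eq:-20-4} of Lemma \ref{lem:W-W}, namely $\mathbf{W}_1[f]\le\beta^2+W^{(n-1)}(\tfrac12-\beta)$, and bound $W^{(n-1)}(\tfrac12-\beta)\le\varphi(\tfrac12-\beta)$ by \eqref{eq:W-phi} to obtain \eqref{eq:-20-4-1}.

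There is no real obstacle in this proof; the argument is a straightforward chaining of two already-established inequalities. The only mildly delicate points are (i) justifying the WLOG reduction to $i=1$ in the first paragraph (which is immediate by coordinate symmetry of $\mathbf{W}_1$), and (ii) checking that the specialization of the general bound at $a=\tfrac12$ is consistent with the separately stated cleaner bound: at $a=\tfrac12$ one has $\varphi(\tfrac12)=\tfrac14=a^2$, so $\sqrt{\varphi(a)-a^2}=0$ and the general inequality \eqref{eq:-71} collapses exactly to \eqref{eq:-20-4-1}, confirming that the two statements agree in this boundary case.
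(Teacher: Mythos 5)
Your proof is correct and follows exactly the route the paper intends: Proposition \ref{prop:ImprovedFKN} is obtained by chaining Lemma \ref{lem:W-W} with the bound $W^{(n)}(a)\le\varphi(a)$ from \eqref{eq:W-phi}, which is precisely what the paper does. Your extra consistency check at $a=\tfrac12$ (using $\varphi(\tfrac12)=\tfrac14$) is a nice confirmation but not needed.
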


This bound improves the existing bound proven in \cite{jendrej2015some}.
Moreover, our bound is sharp for $a=\frac{1}{2},\beta=\frac{1}{4}$,
and asymptotically sharp for $a=\frac{1}{2},\beta\uparrow\frac{1}{2}$.
The sharpness for $a=\frac{1}{2},\beta=\frac{1}{4}$ can be seen from
that for $\beta=\frac{1}{4}$, $W^{(n-1)}(\frac{1}{4})=\varphi(\frac{1}{4})=\frac{1}{8}$
for $n\ge3$, which is attained by $1\{(x_{2},..,x_{n}):x_{2}=x_{3}=1\}$.
The asymptotic sharpness for $a=\frac{1}{2},\beta\uparrow\frac{1}{2}$
can be seen from that if we define $\mathcal{A}_{n}=\{(x_{2},..,x_{n}):\frac{1}{\sqrt{n-1}}\sum_{i=2}^{n}x_{i}\geq r_{n}\}$
with $\lim_{n\to\infty}r_{n}=Q^{-1}(\varepsilon)$ as a sequence of
Hamming balls with volumes asymptotically approaching $\varepsilon$,
where $Q$ is the Q-function, then we have that for $\beta=\frac{1}{2}-\varepsilon$,
$\lim_{n\to\infty}\mathbf{W}_{1}[1_{\mathcal{A}_{n}}]=\psi(Q^{-1}(\varepsilon))\sim2\varepsilon^{2}\ln\frac{1}{\varepsilon}$
as $\varepsilon\downarrow0$ \cite{O'Donnell14analysisof}, where
$\psi$ is the probability density function of the standard Gaussian.

As mentioned in Remark \ref{rem:-implies-that}, in Lemma \ref{lem:W-W}
we have not used the information $|\hat{f}_{\{i\}}|\le\beta$ for
all $i\in[2:n]$. Hence, Lemma \ref{lem:W-W} (or Proposition \ref{prop:ImprovedFKN})
cannot provide a good bound when $\beta$ is small, since for this
case $|\hat{f}_{\{i\}}|$ could be significantly larger than $\beta$
for some $i\in[2:n]$. In order to obtain a good bound for this case,
we use another method to derive the following quantitative (non-asymptotic)
version of FKN theorem, which is a consequence of the variant of Khintchine's
inequality proven by König, Schütt, and Tomczak-Jaegermann \cite{konig1999projection}.
It is worth noting that the variant of Khintchine's inequality proven
in \cite{konig1999projection} was also applied by Friedgut, Kalai,
and Naor to prove the original (asymptotic) version of the FKN theorem
\cite{friedgut2002boolean}. 
\begin{prop}
For $0\le\beta\le a=\frac{1}{2}$, 
\begin{equation}
W^{(n)}(\frac{1}{2},\beta)\leq\frac{1}{4}\Big(\sqrt{4(\frac{1}{2}-\frac{1}{\sqrt{2\pi}})\beta+\frac{1}{2\pi}}+\frac{1}{\sqrt{2\pi}}\Big)^{2}.\label{eq:-21}
\end{equation}
\end{prop}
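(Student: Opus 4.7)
The plan is to reduce the problem to a bound on the degree-$1$ part of the $\pm 1$-valued function $g := 2f-1$ and then apply a sharp Khintchine-type inequality of K\"onig-Sch\"utt-Tomczak-Jaegermann. Fix $f:\{-1,1\}^n\to\{0,1\}$ with $\mathbb{E}f=1/2$ and $\max_i|\hat{f}_{\{i\}}|=\beta$, and set $g := 2f-1\in\{-1,1\}$. Since $\hat{g}_{\emptyset}=0$ and $\hat{g}_{\{i\}} = 2\hat{f}_{\{i\}}$, we have $\mathbf{W}_1[g] = 4\,\mathbf{W}_1[f]$ and $\max_i|\hat{g}_{\{i\}}| = 2\beta$. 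Writing the degree-$1$ part as the Rademacher sum $L(\mathbf{x}) := g_1(\mathbf{x}) = \sum_{i=1}^{n} a_i x_i$ with coefficient vector $a := (\hat{g}_{\{i\}})_{i=1}^{n}$, the task becomes that of controlling $\|a\|_2^2 = \|L\|_2^2$ in terms of $\|a\|_\infty = 2\beta$.

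The Boolean hypothesis $|g|\equiv 1$ enters in exactly one place. By orthogonality of Fourier levels, $\mathbb{E}[Lg] = \mathbb{E}[L\,g_1] = \|L\|_2^2$, and because $|Lg|=|L|$ pointwise we obtain
\[
\|L\|_2^2 \;=\; \mathbb{E}[Lg] \;\le\; \mathbb{E}|Lg| \;=\; \mathbb{E}|L|.
\]
After this step the $f$-dependence is gone and one is left with the problem of bounding $\|a\|_2^2$ above by a function of $\mathbb{E}|L|$ and $\|a\|_\infty$.

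Next I would invoke the sharp K\"onig-Sch\"utt-Tomczak-Jaegermann upper bound for Rademacher sums in the form
\[
\mathbb{E}|L| \;\le\; \sqrt{\tfrac{2}{\pi}}\,\|a\|_2 + \Bigl(1-\sqrt{\tfrac{2}{\pi}}\Bigr)\|a\|_\infty,
\]
which is the very ingredient that Friedgut, Kalai, and Naor used in their original proof of the FKN theorem. Substituting $\|a\|_\infty = 2\beta$ and combining with the display above yields the quadratic constraint $s^2 \le \sqrt{2/\pi}\,s + 2\bigl(1-\sqrt{2/\pi}\bigr)\beta$ in the unknown $s := \|L\|_2$. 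Solving this quadratic in $s$ and then using $\mathbf{W}_1[f] = s^2/4$ produces
\[
\mathbf{W}_1[f] \;\le\; \tfrac{1}{16}\biggl(\sqrt{\tfrac{2}{\pi}} + \sqrt{\tfrac{2}{\pi} + 8\bigl(1-\sqrt{\tfrac{2}{\pi}}\bigr)\beta}\biggr)^{2},
\]
which, after using the identity $\sqrt{2/\pi} = 2/\sqrt{2\pi}$ and pulling a factor of $4$ out of the inner radical, is exactly the stated bound.

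The only substantive ingredient is the K\"onig-Sch\"utt-Tomczak-Jaegermann inequality itself, which is an off-the-shelf result; once it is accepted, the whole proof consists of the reduction $f\mapsto g$, the one-line identity coming from $|g|\equiv 1$, and the quadratic formula. As a sanity check, at $\beta=1/2$ the bound equals $1/4$, matching the dictator, and at $\beta=0$ it equals the non-zero constant $1/(2\pi)$; this is exactly the regime in which the present bound is designed to be strong, in contrast to Proposition~\ref{prop:ImprovedFKN}, which is weak at small $\beta$ because it never uses the constraint $|\hat{f}_{\{i\}}|\le\beta$ for $i\ge 2$.
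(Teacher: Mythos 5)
Your proposal is correct and is essentially the paper's own argument in the $\pm1$ normalization: the step $\|g_1\|_2^2=\mathbb{E}[g_1 g]\le\mathbb{E}|g_1|$ for $g=2f-1$ is, after rescaling, exactly the paper's chain $\mathbf{W}_1[f]=\mathbb{E}[\sum_i\hat{f}_{\{i\}}X_i f]\le\mathbb{E}[\sum_i\hat{f}_{\{i\}}X_i\,1\{\sum_i\hat{f}_{\{i\}}X_i\ge0\}]=\tfrac12\mathbb{E}|\sum_i\hat{f}_{\{i\}}X_i|$ (which uses $\mathbb{E}f=\tfrac12$ only through the mean-zero linear part, just as you use $|g|\equiv1$). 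From there both proofs invoke the same K\"onig--Sch\"utt--Tomczak-Jaegermann inequality and solve the identical quadratic in $\sqrt{\mathbf{W}_1}$, yielding \eqref{eq:-21}.
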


\begin{proof}
We have that 
\begin{align}
\mathbf{W}_{1}[f] & =\mathbb{E}\Big[\sum_{i=1}^{n}\hat{f}_{\{i\}}X_{i}f(\mathbf{X})\Big]\label{eq:-45}\\
 & \leq\mathbb{E}\Big[\sum_{i=1}^{n}\hat{f}_{\{i\}}X_{i}\,1\{\sum_{i=1}^{n}\hat{f}_{\{i\}}X_{i}\geq0\}\Big]\label{eq:-43}\\
 & =\frac{1}{2}\mathbb{E}\Big[|\sum_{i=1}^{n}\hat{f}_{\{i\}}X_{i}|\Big]\nonumber \\
 & \leq\frac{1}{2}\Big(\sqrt{\frac{2}{\pi}}\sqrt{\mathbf{W}_{1}[f]}+(1-\sqrt{\frac{2}{\pi}})\beta\Big),\label{eq:-31}
\end{align}
where \eqref{eq:-43} follows since if we relax $(\hat{f}_{\{i\}})_{i\in[n]}$
and $f$ to be independent quantities, then given $\hat{f}_{\{i\}}$,
the Boolean function $f:\mathbf{x}\mapsto1\{\sum_{i=1}^{n}\hat{f}_{\{i\}}x_{i}\geq0\}$
maximizes the expectation in \eqref{eq:-45}, and \eqref{eq:-31}
follows from the following variant of Khintchine's inequality 
\[
\Big|\mathbb{E}\Big[\Big|\sum_{i=1}^{n}c_{i}X_{i}\Big|\Big]-\sqrt{\frac{2}{\pi}}\left\Vert \mathbf{c}\right\Vert _{2}\Big|\leq\Big(1-\sqrt{\frac{2}{\pi}}\Big)\left\Vert \mathbf{c}\right\Vert _{\infty}
\]
with $\mathbf{c}:=(c_{1},c_{2},...,c_{n})$ which was proven by König,
Schütt, and Tomczak-Jaegermann \cite{konig1999projection}. Solving
the inequality in \eqref{eq:-31}, we obtain \eqref{eq:-21}. 
\end{proof}

\subsection{\label{subsec:Further-Improvement}Further Improvement of Our Bounds
on Noise Stability}

Based on upper bounds on $W^{(n)}(a,\beta)$ (e.g., \eqref{eq:-71}
and \eqref{eq:-21}), we improve Theorem \ref{thm:generalbound} as
follows. 
\begin{thm}
\label{thm:generalbound-k} Let $\omega(a,\beta)$ be an upper bound
on $W^{(n)}(a,\beta)$. Then for $a,\rho\in(0,1)$, $\mathbf{MaxStab}_{\Phi}(a)\leq\Upsilon_{\rho}(a)$,
where 
\begin{align}
\Upsilon_{\rho}(a):=\sup_{\beta,P_{Z|SX}} & \mathbb{E}[\Phi(a+\rho Z)]\label{eq:generalbound-2}\\
\mathrm{s.t.}\; & 0\leq\beta\leq a,1-a,\label{eq:con1-2}\\
 & 0\leq a+\rho Z\leq1\quad\mathrm{a.s.}\label{eq:con2-2}\\
 & \mathbb{E}Z=0,\label{eq:con3-2}\\
 & \mathbb{E}[XZ]=\beta,\label{eq:con4-2}\\
 & \mathbb{E}[Z^{2}]\leq(1-\rho)\omega(a,\beta)+\rho\mathbb{E}[SZ],\label{eq:con6-2}
\end{align}
with 
\begin{equation}
P_{SX}(s,x)=\begin{cases}
\frac{1-a+\beta}{2} & (s,x)=(-a,-1)\\
\frac{1-a-\beta}{2} & (s,x)=(-a,1)\\
\frac{a-\beta}{2} & (s,x)=(1-a,-1)\\
\frac{a+\beta}{2} & (s,x)=(1-a,1)
\end{cases}.\label{eq:P_SX}
\end{equation}
\end{thm}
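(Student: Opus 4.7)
The idea is to refine the Fourier-based reduction of Theorem~\ref{thm:generalbound} by tracking one extra coordinate $X=X_{i^{*}}$, namely the coordinate realizing $\max_i |\hat{f}_{\{i\}}|$, so that the quantitative FKN bound $\mathbf{W}_{1}[f]\leq\omega(a,\beta)$ can be absorbed into the auxiliary optimization as a linear constraint on the energy of $Z$.

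Given a Boolean function $f$ with $\mathbb{E}f=a$, I set $\beta:=\max_i |\hat{f}_{\{i\}}|$. Permuting the coordinates and, if necessary, flipping the sign of a single one (both operations leave $\mathbb{E}f$ and $\mathbf{Stab}_{\Phi}[f]$ unchanged, and only change the signs of individual first-level Fourier coefficients), I reduce without loss of generality to $\hat{f}_{\{1\}}=\beta\geq 0$. I then define $S:=f(\mathbf{X})-a$, $X:=X_{1}$, and $Z:=\sum_{k=1}^{n}\rho^{k-1}f_{k}(\mathbf{X})$ exactly as in the proof of Theorem~\ref{thm:generalbound}, so that $T_{\rho}f(\mathbf{X})=a+\rho Z$ and $\mathbb{E}Z=0$. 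The constraints~\eqref{eq:con1-2}--\eqref{eq:con4-2} now follow by direct inspection: the range condition $0\leq a+\rho Z\leq 1$ holds because $T_{\rho}f$ is $[0,1]$-valued; the joint law $P_{SX}$ in~\eqref{eq:P_SX} is forced by $\mathbb{E}f=a$ together with $\mathbb{E}[fX_{1}]=\beta$; the identity $\mathbb{E}[XZ]=\beta$ comes from the Fourier orthogonality $\mathbb{E}[X_{1}f_{k}(\mathbf{X})]=\hat{f}_{\{1\}}\,1\{k=1\}$; and $\beta\leq\min(a,1-a)$ follows from $|\hat{f}_{\{1\}}|\leq\min(\mathbb{E}f,\mathbb{E}[1-f])$.

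The decisive new step is the FKN-augmented energy constraint~\eqref{eq:con6-2}. By Parseval applied to the Fourier expansion of $Z$ and of $S$, together with the orthogonality of distinct $f_{k}$'s,
\begin{align*}
\mathbb{E}[Z^{2}]-\rho\,\mathbb{E}[SZ]
  =\sum_{k=1}^{n}\bigl(\rho^{2(k-1)}-\rho^{k}\bigr)\mathbf{W}_{k}
  =(1-\rho)\mathbf{W}_{1}+\sum_{k=2}^{n}\rho^{k}\bigl(\rho^{k-2}-1\bigr)\mathbf{W}_{k}.
\end{align*}
For $k\geq 2$ and $\rho\in(0,1)$ the factor $\rho^{k-2}-1$ is nonpositive, so the $k\geq 2$ tail is $\leq 0$ and $\mathbb{E}[Z^{2}]-\rho\,\mathbb{E}[SZ]\leq(1-\rho)\mathbf{W}_{1}$. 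Because $\beta$ was chosen as the largest $|\hat{f}_{\{i\}}|$, I have $\mathbf{W}_{1}\leq W^{(n)}(a,\beta)\leq\omega(a,\beta)$ by the hypothesis on $\omega$, which is exactly~\eqref{eq:con6-2}. Hence the triple $(\beta,P_{SX},P_{Z|SX})$ is feasible in~\eqref{eq:generalbound-2}--\eqref{eq:con6-2}, so $\mathbf{Stab}_{\Phi}[f]=\mathbb{E}[\Phi(a+\rho Z)]\leq\Upsilon_{\rho}(a)$, completing the proof.

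I do not expect a real obstacle beyond careful bookkeeping: the whole argument is a refinement of Theorem~\ref{thm:generalbound} whose single new ingredient is the nonpositivity of $\rho^{2(k-1)}-\rho^{k}$ for $k\geq 2$, which lets all higher-degree Fourier weights be discarded in favor of a single FKN-controlled bound on $\mathbf{W}_{1}$. The one place that deserves a pause is the sign-and-permutation normalization needed to align the marginal of $(S,X)$ with the canonical law~\eqref{eq:P_SX}; once that is observed, everything else is a direct Fourier computation.
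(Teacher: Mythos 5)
Your proposal is correct and follows essentially the same route as the paper: normalize so the maximizing first-level coefficient sits at coordinate $1$ with $\hat{f}_{\{1\}}=\beta\ge 0$, take $S=f-a$, $X=X_{1}$, $Z=\sum_{k\ge 1}\rho^{k-1}f_{k}(\mathbf{X})$, verify \eqref{eq:con1-2}--\eqref{eq:con4-2} and \eqref{eq:P_SX} by Fourier orthogonality, and obtain \eqref{eq:con6-2} from $\mathbb{E}[Z^{2}]-\rho\mathbb{E}[SZ]=\sum_{k}(\rho^{2(k-1)}-\rho^{k})\mathbf{W}_{k}\le(1-\rho)\mathbf{W}_{1}\le(1-\rho)\omega(a,\beta)$. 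The only cosmetic difference is that the paper routes the same weight computation through the auxiliary variables $\hat{S}=\sum_{k\ge2}f_{k}$ and $\hat{Z}=\sum_{k\ge2}\rho^{k-2}f_{k}$, whereas you discard the degree-$\ge 2$ terms directly; the content is identical.
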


\begin{proof}
Recall that $\{\hat{f}_{\mathcal{S}}\}$ are the Fourier coefficients
of $f$. WLOG, we assume $|\hat{f}_{\{1\}}|\geq|\hat{f}_{\{i\}}|$
for all $2\le i\le n$. If $\hat{f}_{\{1\}}\ge0$, denote $\beta:=\hat{f}_{\{1\}}$
and $X:=X_{1}$; otherwise, $\beta:=-\hat{f}_{\{1\}}$ and $X:=-X_{1}$.
Hence $\beta\ge0$. Denote 
\begin{align*}
Y & =\sum_{i=2}^{n}\hat{f}_{\{i\}}X_{i},\quad\hat{S}=\sum_{k=2}^{n}f_{k}(\mathbf{X}),\quad\hat{Z}=\sum_{k=2}^{n}\rho^{k-2}f_{k}(\mathbf{X}).
\end{align*}
Recall the definitions of $S,Z$ in \eqref{eq:S} and \eqref{eq:Z}.
Then, 
\begin{align}
 & S=f-a=\beta X+Y+\hat{S}\nonumber \\
 & Z=\beta X+Y+\rho\hat{Z}\nonumber \\
 & \mathbb{E}[XZ]=\beta\nonumber \\
 & \mathbb{E}[SZ]=\mathbf{W}_{1}+\rho\mathbb{E}[\hat{S}\hat{Z}],\label{eq:-4-1}
\end{align}
and $(S,X)$ follows the distribution in \eqref{eq:P_SX} (since $\mathbb{E}[SX]=\beta$).
Observe that 
\begin{align}
 & \mathbb{E}[\hat{Z}^{2}]=\sum_{k=2}^{n}\rho^{2(k-2)}\mathbf{W}_{k}\leq\sum_{k=2}^{n}\rho^{k-2}\mathbf{W}_{k}=\mathbb{E}[\hat{S}\hat{Z}].\label{eq:-5-2}
\end{align}
Combining \eqref{eq:-4-1}, \eqref{eq:-5-2}, and the fact that $\mathbf{W}_{1}\le\omega(a,\beta)$,
yields that 
\begin{align*}
\mathbb{E}[Z^{2}] & =\mathbf{W}_{1}+\rho^{2}\mathbb{E}[\hat{Z}^{2}]\\
 & \leq(1-\rho)\mathbf{W}_{1}+\rho(\mathbf{W}_{1}+\rho\mathbb{E}[\hat{S}\hat{Z}])\\
 & \leq(1-\rho)\omega(a,\beta)+\rho\mathbb{E}[SZ].
\end{align*}
Hence $\mathbf{MaxStab}_{\Phi}(a)\leq\Upsilon_{\rho}(a)$. 
\end{proof}
Here $\omega(a,\beta)$ can be chosen as the upper bound in \eqref{eq:-71}
for any $a$ or as the minimum of the upper bounds in \eqref{eq:-71}
and \eqref{eq:-21} for $a=1/2$. Compared to Theorem \ref{thm:generalbound},
the bound in Theorem \ref{thm:generalbound-k} is based on a more
elaborate analysis, which introduces the maximum of absolute values
of first-order Fourier coefficients as a parameter, i.e., $\beta$,
and then optimizes the bound over all possible $\beta$. To better
understand the intuition of Theorem \ref{thm:generalbound-k}, we
focus on the case $a=1/2$. For this case, if $\beta$ is close to
$1/2$, then \eqref{eq:con4-2} forces $Z$ to be almost linearly
dependent of the Boolean r.v. $X$ (note that the energy of $Z$ is
controlled under the constraint and \eqref{eq:con6-2}). This in turn
forces optimal solutions to \eqref{eq:generalbound-2} close to dictator
functions (for which $f(\mathbf{X})=\frac{1+X}{2}$, and $Y=\hat{S}=\hat{Z}=0,S=Z=X/2$
where $X=X_{i}$ or $-X_{i}$ for some $i\in[n]$). On the other hand,
if $\beta$ is far from $1/2$, then $\omega(a,\beta)$ becomes relatively
small (see \eqref{eq:-71} and \eqref{eq:-21}), which, combined with
\eqref{eq:con6-2}, forces the energy of $Z$ to be small. This in
turn decreases the objective function, and hence, this kind of $\beta$
is excluded from being optimal to the supremization in \eqref{eq:generalbound-2}.

Denote $\mathcal{X}:=\{\pm1\}$. For simplicity, we next focus on
the case $a=1/2$, and suppose that $\omega(\beta):=\omega(1/2,\beta)$
is continuous in $\beta$ and satisfies the trivial inequality $\omega(\beta)\le\frac{1}{4}$
for all $\beta$ (since $\mathbf{W}_{0}+\mathbf{W}_{1}\le\mathbb{E}[f^{2}]=a$
and $\mathbf{W}_{0}=a^{2}$). Similarly to the definition of $\Lambda_{\rho}^{(m)}(a)$
in \eqref{eq:generalboundsimple}, for such $\omega(\beta)$ and $m\ge4$,
we define 
\begin{align}
\Psi_{\rho}^{(m)}:=\max_{\substack{\beta,z_{s,x,i}\in\mathbb{R},\\
p_{s,x,i}>0,\\
(s,x,i)\in\mathcal{S}\times\mathcal{X}\times[m]
}
} & \sum_{(s,x,i)\in\mathcal{S}\times\mathcal{X}\times[m]}p_{s,x,i}\Phi(\frac{1}{2}+\rho z_{s,x,i})\label{eq:generalboundsimple-2}\\
\mathrm{s.t.}\; & 0\leq\beta\leq\frac{1}{2},\label{eq:con1-5-2}\\
 & 0\leq\frac{1}{2}+\rho z_{s,x,i}\leq1,\forall(s,x,i)\in\mathcal{S}\times\mathcal{X}\times[m],\\
 & \sum_{i\in[m]}p_{s,x,i}=P_{SX}(s,x),\forall(s,x)\in\mathcal{S}\times\mathcal{X},\label{eq:con2-5-2}\\
 & \sum_{(s,x,i)\in\mathcal{S}\times\mathcal{X}\times[m]}p_{s,x,i}z_{s,x,i}=0,\label{eq:-48}\\
 & \sum_{(s,x,i)\in\mathcal{S}\times\mathcal{X}\times[m]}p_{s,x,i}xz_{s,x,i}=\beta,\\
 & \sum_{(s,x,i)\in\mathcal{S}\times\mathcal{X}\times[m]}p_{s,x,i}(z_{s,x,i}^{2}-\rho sz_{s,x,i})\leq(1-\rho)\omega(\beta).\label{eq:con3-3}
\end{align}

Following proof steps similar to those of Proposition \ref{prop:property},
one can show that the optimization problem in \eqref{eq:generalbound-2}
satisfies the following properties. The proof is provided in Section
\ref{sec:Proof-of-Proposition-1}. 
\begin{prop}
\label{prop:property-1} Suppose that\footnote{This guarantees the existence of the optimal solutions to the optimization
problem in \eqref{eq:generalbound-2} or the equivalent one in \eqref{eq:generalboundsimple-2}. } $\Phi$ is symmetric and continuous on $[0,1]$, and differentiable
on $(0,1)$ whose derivative $\Phi'$ is increasing and continuous
$(0,1)$, and strictly concave on $(0,\frac{1}{2}]$. Suppose that
$\omega(\beta)$ is continuous in $\beta$ and satisfies that $\omega(\beta)\le\frac{1}{4}$
for all $\beta$. For $a=1/2,\rho\in(0,1)$, and $m\ge4$, the following
hold. \\
 1. Without loss of optimality, it suffices to restrict that $|\mathrm{supp}(P_{Z})|\le7$
and $|\mathrm{supp}(P_{Z|S=s,X=x})|\le4$ for $(s,x)\in\mathcal{S}\times\mathcal{X}$,
which implies that $\Upsilon_{\rho}(\frac{1}{2})=\Psi_{\rho}^{(m)}$.
\\
 2. Any optimal solution to the optimization problem in \eqref{eq:generalbound-2}
with $a=1/2$ (or the equivalent one in \eqref{eq:generalboundsimple-2})
satisfies $\mathbb{E}[Z^{2}]=(1-\rho)\omega(\beta)+\rho\mathbb{E}[SZ]$.\\
 3. Any optimal solution to the maximization problem in \eqref{eq:generalboundsimple-2}
satisfies that $z_{\frac{1}{2},x,j}\ge z_{-\frac{1}{2},x,i}+\frac{1}{2}$
for all $i,j\in[m],x\in\mathcal{X}$ such that $z_{-\frac{1}{2},x,i}>\frac{-1}{2\rho},z_{\frac{1}{2},x,j}<\frac{1}{2\rho}$.
\\
 4. For even $m\ge8$, there is an optimal solution $P_{SXZ}$ to
the maximization problem in \eqref{eq:generalboundsimple-2} such
that $P_{SXZ}(-s,-x,-z)=P_{SXZ}(s,x,z)$ for all $(s,x,z)$, which
either satisfies the LICQ or can be expressed as 
\begin{equation}
P_{SXZ}(s,x,z)=\begin{cases}
\frac{1+2\beta}{4} & (s,x,z)=(-\frac{1}{2},-1,z_{1})\\
\frac{1-2\beta}{4} & (s,x,z)=(\frac{1}{2},-1,z_{2})\\
\frac{1-2\beta}{4} & (s,x,z)=(-\frac{1}{2},1,-z_{2})\\
\frac{1+2\beta}{4} & (s,x,z)=(\frac{1}{2},1,-z_{1})
\end{cases}\label{eq:-50}
\end{equation}
for some $z_{1},z_{2}$ such that $-\frac{1}{2\rho}\leq z_{1}\leq z_{2}\leq\frac{1}{2\rho}$. 
\end{prop}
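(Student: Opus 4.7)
My plan is to adapt the arguments used for Proposition \ref{prop:property} to the richer feasible set in \eqref{eq:generalbound-2}, tracking the extra conditioning variable $X$ and the shared parameter $\beta$.

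For Statement 1, I proceed in two stages. Conditioning on $(S,X)=(s,x)$ and viewing $\mathbb{E}\bigl[\bigl(\Phi(\tfrac{1}{2}+\rho Z),\,Z,\,Z^{2}-\rho sZ\bigr)\mid S=s,X=x\bigr]$ as a point in the convex hull of a set in $\mathbb{R}^{3}$, Carath\'eodory's theorem yields $|\mathrm{supp}(P_{Z\mid S=s,X=x})|\le 4$. For the global bound, I recast \eqref{eq:generalbound-2} for fixed $\beta$ as a linear program in the joint masses $p_{s,x,z}$; the equality constraints are one probability-sum, three independent marginal equations fixing $P_{SX}$, and the two moment equations \eqref{eq:con3-2}--\eqref{eq:con4-2}, totalling six. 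By Statement 2, \eqref{eq:con6-2} is active at the optimum, adding one more equality; an optimal basic feasible solution then has at most seven nonzero $p_{s,x,z}$, so $|\mathrm{supp}(P_{Z})|\le 7$.

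Statement 2 is shown by contradiction. If \eqref{eq:con6-2} is slack at an optimum, I pick $(s,x,i)$ with $z_{s,x,i}$ interior to $\bigl[-\tfrac{1}{2\rho},\tfrac{1}{2\rho}\bigr]$ and split the atom at $z_{s,x,i}$ symmetrically into $z_{s,x,i}\pm\delta$ with equal mass. This leaves $P_{SX}$, $\mathbb{E}[Z]$ and $\mathbb{E}[XZ]$ unchanged, grows $\mathbb{E}[Z^{2}-\rho SZ]$ by $p_{s,x,i}\delta^{2}$, and strictly raises the objective by the strict convexity of $\Phi$ (inherited from the concavity of $\Phi'$ and the symmetry of $\Phi$), contradicting optimality. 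For Statement 3, I use a swap: if $z_{-1/2,x,i}$ and $z_{1/2,x,j}$ are both interior with gap less than $\rho/2$, transfer mass $\epsilon$ from $(-\tfrac{1}{2},x,z_{-1/2,x,i})$ to $(\tfrac{1}{2},x,z_{-1/2,x,i})$ and simultaneously $\epsilon$ from $(\tfrac{1}{2},x,z_{1/2,x,j})$ to $(-\tfrac{1}{2},x,z_{1/2,x,j})$. The $(S,X)$-marginal, $\mathbb{E}[Z]$, $\mathbb{E}[XZ]$, $\mathbb{E}[Z^{2}]$ and the objective are all preserved, whereas $\mathbb{E}[SZ]$ changes by $\epsilon\bigl(z_{-1/2,x,i}-z_{1/2,x,j}\bigr)$, producing strict slack in \eqref{eq:con6-2}. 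By Statement 2 this slack can then be spent via an atom-splitting perturbation to strictly increase the objective, a contradiction. The quantitative $\rho/2$ threshold arises from optimizing the splitting perturbation against the swap magnitude subject to the range constraints on $Z$; existence of some $(i,j)$ attaining equality follows from optimality.

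For Statement 4, the symmetry of $\Phi$ about $\tfrac{1}{2}$ and of the feasible set under $(s,x,z)\mapsto(-s,-x,-z)$ imply that if $P_{SXZ}$ is optimal, so is its sign-flipped counterpart, and their average is a symmetric optimum (by convexity of the feasible set and linearity of the objective in $P$). For even $m\ge 8$, the support can be reindexed to respect this symmetry. A gradient count then yields the dichotomy: either the active constraints at the symmetric optimum form a linearly independent family and LICQ holds, or the symmetric optimum lies on a lower-dimensional face where LICQ fails. The main obstacle I expect is the final step --- enumerating the active-constraint patterns compatible with the sign-flip symmetry and showing that the only degenerate configuration which survives is the four-point one in \eqref{eq:-50}, characterized by a single $z$-value at each $(s,x)$ with the sign-paired masses displayed there; ruling out other spurious symmetric faces that might, a priori, admit a symmetric optimum with neither LICQ nor the four-point structure will require a careful case analysis.
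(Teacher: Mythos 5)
Your Statements 1 and 2 follow the paper's route (Carath\'eodory plus a linear-programming basic-solution count, and an atom-splitting perturbation), but two of your steps do not hold up. First, in Statement 2 you ``pick an interior atom'' without justifying that one exists; this is precisely where the hypothesis $\omega(\beta)\le\frac14$ is needed (if $Z\in\{\pm\frac{1}{2\rho}\}$ a.s., then $\mathbb{E}[Z^2]=\frac{1}{4\rho^2}$ while $(1-\rho)\omega(\beta)+\rho\mathbb{E}[SZ]\le\frac{1-\rho}{4}+\rho\sqrt{\mathbb{E}[S^2]\mathbb{E}[Z^2]}=\frac{2-\rho}{4}<\frac{1}{4\rho^2}$, so such a point is infeasible). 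More seriously, your Statement 3 perturbation is the wrong one: transferring mass between $S=-\frac12$ and $S=\frac12$ at fixed $(x,z)$ leaves $P_{XZ}$, hence the objective, $\mathbb{E}[Z]$, $\mathbb{E}[XZ]$ and $\mathbb{E}[Z^2]$, unchanged, and changes $\mathbb{E}[SZ]$ by $\epsilon\,(z_{-\frac12,x,i}-z_{\frac12,x,j})$; this creates slack in \eqref{eq:con6-2} only when $z_{-\frac12,x,i}>z_{\frac12,x,j}$, so your argument rules out only a gap of $0$, not the claimed $\frac{\rho}{2}$. The margin cannot be recovered by ``optimizing the splitting against the swap'': in the regime $z_{-\frac12,x,i}\le z_{\frac12,x,j}<z_{-\frac12,x,i}+\frac{\rho}{2}$ the swap produces no slack at all, while the split consumes slack $p\delta^2$ and gains objective only at second order. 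What is needed (as in Statement 3 of Proposition \ref{prop:property}) is a position perturbation $z_{-\frac12,x,i}\leftarrow z_{-\frac12,x,i}-\epsilon$, $z_{\frac12,x,j}\leftarrow z_{\frac12,x,j}+\epsilon'$ with $p_{-\frac12,x,i}\epsilon=p_{\frac12,x,j}\epsilon'$ (same $x$, so $\mathbb{E}[XZ]$ and $\mathbb{E}[Z]$ are preserved); it increases the objective at first order by convexity and changes $\rho\mathbb{E}[SZ]-\mathbb{E}[Z^2]$ by $p_{-\frac12,x,i}\epsilon\bigl(\rho-2(z_{\frac12,x,j}-z_{-\frac12,x,i})-(\epsilon+\epsilon')\bigr)$, which is exactly where the threshold $\frac{\rho}{2}$ (rather than $\frac12$, because the active constraint carries the factor $\rho$ on $\mathbb{E}[SZ]$) comes from.

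For Statement 4, your symmetrization is fine (averaging with the sign-flipped solution is legitimate since, for fixed $\beta$, all constraints are linear in $P_{SXZ}$ and $\Phi(\frac12+\rho z)$ is even in $z$; this matches the paper's construction and explains the requirement of even $m\ge8$). But the dichotomy ``either LICQ holds or it fails'' is vacuous; the entire content of Statement 4 is the structure of the failure case, and you explicitly leave that case analysis undone. The paper closes it as follows: at a symmetric optimum all $p_{s,x,i}>0$, and if LICQ fails, then for each fixed $(s,x)$ the $2\times$-row block of the reduced gradient matrix formed by the marginal constraint $\sum_i p_{s,x,i}=P_{SX}(s,x)$ and the active quadratic constraint (its $p$- and $z$-columns restricted to that block) must be rank-deficient, which forces the $z_{s,x,i}$ to coincide for all $i$; hence $Z$ is a function of $(S,X)$, and the sign-flip symmetry then puts $P_{SXZ}$ in the four-point form \eqref{eq:-50}. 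Without an argument of this kind, ruling out other degenerate symmetric configurations, your proposal does not establish Statement 4.
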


Define 
\begin{equation}
\overline{\Upsilon}_{\rho}:=\max_{\beta\in[0,\frac{1}{2}]}\max_{(z_{1},z_{2})\in\check{\mathcal{Z}}}h(\beta,z_{1},z_{2}),\label{eq:-49}
\end{equation}
where 
\[
\check{\mathcal{Z}}:=\{(z_{1},z_{2}):-\frac{1}{2\rho}\leq z_{1}\leq z_{2}\leq\frac{1}{2\rho},0\leq p\leq\frac{1}{4}+\frac{\beta}{2},0\leq q\leq\frac{1}{4}-\frac{\beta}{2}\},
\]
and 
\[
h(\beta,z_{1},z_{2}):=(1-2p-2q)\Phi(0)+2p\Phi(\frac{1}{2}+\rho z_{1})+2q\Phi(\frac{1}{2}+\rho z_{2}),
\]
with 
\begin{align}
 & p=\frac{(1-\rho)(1+\rho-4\rho^{2}\omega(\beta)+2\beta(1+2\rho z_{2}-\rho^{2}))}{4(1+2\rho z_{1})(1+\rho z_{2}-\rho z_{1}-\rho^{2})},\label{eq:p-2}\\
 & q=\frac{(1-\rho)(1+\rho-4\rho^{2}\omega(\beta)-2\beta(1-2\rho z_{1}-\rho^{2}))}{4(1-2\rho z_{2})(1+\rho z_{2}-\rho z_{1}-\rho^{2})}.\label{eq:q-2}
\end{align}

\begin{thm}
\label{thm:symmetric-1} If $\Phi$ is symmetric and continuous on
$[0,1]$ and differentiable on $(0,1)$ whose derivative $\Phi'$
is increasing and continuous $(0,1)$, and strictly concave on $(0,\frac{1}{2}]$,
then $\mathbf{MaxStab}_{\Phi}(\frac{1}{2})\leq\overline{\Upsilon}_{\rho}.$ 
\end{thm}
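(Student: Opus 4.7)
The plan is to combine Theorem \ref{thm:generalbound-k} with the structural results of Proposition \ref{prop:property-1}, closely paralleling the proof strategy for Theorem \ref{thm:symmetric}, Statement 1, but now with the extra parameter $\beta$ and the refined energy constraint \eqref{eq:con6-2}. First, I invoke Theorem \ref{thm:generalbound-k} at $a = 1/2$ with $\omega(\beta) := \omega(1/2,\beta)$, obtaining $\mathbf{MaxStab}_{\Phi}(\tfrac{1}{2}) \le \Upsilon_{\rho}(\tfrac{1}{2})$, and rewrite $\Upsilon_{\rho}(1/2)$ in the explicit form \eqref{eq:generalboundsimple-2} using Statement 1 of Proposition \ref{prop:property-1} with $m \ge 8$. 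For each fixed $\beta \in [0,1/2]$, Statement 4 of Proposition \ref{prop:property-1} then produces an optimizer $P_{SXZ}$ that is symmetric under $(s,x,z) \mapsto (-s,-x,-z)$ and falls into one of two cases: (a) the LICQ holds, or (b) $P_{SXZ}$ is the 4-point distribution \eqref{eq:-50}.

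Case (b) corresponds to placing a point mass on $\{z_1,z_2,-z_2,-z_1\}$ with the marginal weights of $P_{SX}$. Using that $\Phi$ is symmetric about $1/2$, the objective reduces to $\tfrac{1+2\beta}{2}\Phi(\tfrac{1}{2}+\rho z_1) + \tfrac{1-2\beta}{2}\Phi(\tfrac{1}{2}+\rho z_2)$, which is exactly $h(\beta,z_1,z_2)$ in the degenerate regime $1-2p-2q = 0$. By Statement 2 of Proposition \ref{prop:property-1}, the energy constraint \eqref{eq:con3-1} holds with equality, which together with the feasibility $z_1 \le z_2$ and $z_1,z_2 \in [-\tfrac{1}{2\rho},\tfrac{1}{2\rho}]$ shows this point lies in the feasible set $\check{\mathcal{Z}}$ and is therefore covered by \eqref{eq:-49}.

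Case (a) is the substantive case. Writing the Lagrangian with multipliers $\lambda_{s,x}$ (for \eqref{eq:con2-5-2}), $\mu,\nu$ (for the two moment equalities \eqref{eq:-48}), and $\eta \ge 0$ (for \eqref{eq:con3-1}), the stationarity condition in $z_{s,x,i}$ at an interior point reads
\begin{equation*}
\Phi'\!\left(\tfrac{1}{2}+\rho z\right) = a_{s,x} + \tfrac{2\eta}{\rho}\,z,
\end{equation*}
with $a_{s,x}$ depending on $(s,x)$ but not on $i$. Since $\Phi'$ is strictly concave on $(0,1/2]$ and strictly convex on $[1/2,1)$ by symmetry, the map $z \mapsto \Phi'(\tfrac{1}{2}+\rho z)$ changes concavity exactly once on $(-\tfrac{1}{2\rho},\tfrac{1}{2\rho})$, so the stationarity equation has at most three interior solutions; the remaining active support points must lie on the boundary $\{\pm\tfrac{1}{2\rho}\}$. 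Combined with the $(s,x,z) \mapsto (-s,-x,-z)$ symmetry, the marginal support of $Z$ is contained in $\{\pm\tfrac{1}{2\rho},\pm z_1,\pm z_2\}$ for some $0 \le z_1 \le z_2 \le \tfrac{1}{2\rho}$. Parameterizing the symmetric probabilities by $p = P_Z(z_1) = P_Z(-z_1)$ and $q = P_Z(z_2) = P_Z(-z_2)$, the constraints $\mathbb{E}Z = 0$ and $\mathbb{E}[XZ] = \beta$ automatically follow from symmetry and the marginal of $(S,X)$, while the tight energy constraint $\mathbb{E}[Z^2] = (1-\rho)\omega(\beta) + \rho\,\mathbb{E}[SZ]$ gives two linear equations in $(p,q)$ whose solution yields precisely \eqref{eq:p-2}-\eqref{eq:q-2}. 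The objective then evaluates to $h(\beta,z_1,z_2)$, and taking the supremum over $\beta \in [0,1/2]$ and $(z_1,z_2) \in \check{\mathcal{Z}}$ gives the asserted bound $\overline{\Upsilon}_\rho$.

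The main obstacle is the KKT step in case (a): one must rigorously count the interior roots of the stationarity equation by exploiting the single concavity change of $\Phi'$ across $1/2$, and then pair these with the active boundary constraints $\tfrac{1}{2}+\rho z \in \{0,1\}$ in a way that is consistent with the symmetry imposed by Statement 4 of Proposition \ref{prop:property-1}. A secondary technical point is to verify that the explicit formulas \eqref{eq:p-2}-\eqref{eq:q-2} always produce probabilities in $[0,\tfrac{1}{4}+\tfrac{\beta}{2}] \times [0,\tfrac{1}{4}-\tfrac{\beta}{2}]$ on the feasible set $\check{\mathcal{Z}}$, so that the optimization in \eqref{eq:-49} is indeed over a valid parameter region.
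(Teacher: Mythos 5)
Your overall architecture is the same as the paper's: invoke Theorem \ref{thm:generalbound-k} at $a=\tfrac12$, pass to the finite-support form \eqref{eq:generalboundsimple-2}, and use Statement 4 of Proposition \ref{prop:property-1} to split into the four-point solution \eqref{eq:-50} (which is dominated because it is feasible in \eqref{eq:-49}) and the LICQ case handled by KKT. The four-point case is fine. The problem is the LICQ case, which you yourself flag as ``the main obstacle'': as written it has a genuine gap. You use only the stationarity condition in $z_{s,x,i}$, i.e.\ that each active interior atom solves $\rho\Phi'(\tfrac12+\rho z)+\lambda(s-2z)+\eta+\eta' x=0$, which gives at most three interior roots \emph{per cell} $(s,x)$. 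Since there are two cells up to the $(s,x,z)\mapsto(-s,-x,-z)$ symmetry, this allows up to six distinct interior atoms plus boundary atoms, so your claim that the marginal support of $Z$ is contained in $\{\pm\tfrac{1}{2\rho},\pm z_1,\pm z_2\}$ does not follow. The paper's reduction crucially also uses the stationarity in $p_{s,x,i}$: the antiderivative $G(s,x,z)=\int_0^z g(s,x,t)\,\mathrm{d}t$ must take the same value at all atoms of a given cell, and, combined with the concave/convex shape and symmetry of $\Phi'$ about $\tfrac12$, this rules out two adjacent roots lying in the same cell and forces, when two roots are active in a cell, that they be symmetric ($z_1+z_3=0$). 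Only then does the support collapse to the three explicit configurations (the paper's Cases 1--3), of which Case 1 yields $h(\beta,z_1,z_2)$ with \eqref{eq:p-2}--\eqref{eq:q-2}, while Cases 2 and 3 (one cell supported on a single boundary atom, the other on $\pm\hat z$) must be dispatched separately: the paper shows they either violate the energy constraint \eqref{eq:con3-1} or degenerate to dictator functions, again dominated by $\overline{\Upsilon}_\rho$. Your proposal never introduces the $G$-condition and never treats these residual configurations, so the reduction to \eqref{eq:-49} is unsubstantiated.

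There are also bookkeeping errors in your parametrization. The constraint $\mathbb{E}[XZ]=\beta$ is \emph{not} automatic from the symmetry $P_{SXZ}(-s,-x,-z)=P_{SXZ}(s,x,z)$ (only $\mathbb{E}Z=0$ is); it is one of the two scalar equations which, together with the tight energy constraint from Statement 2, determine $(p,q)$ and produce \eqref{eq:p-2}--\eqref{eq:q-2} (note $\omega(\beta)$ appears in those formulas). A single tight energy equation cannot supply ``two linear equations in $(p,q)$.'' Moreover the assignment of atoms to cells matters, since $\mathbb{E}[SZ]$ and $\mathbb{E}[XZ]$ depend on which $(s,x)$ carries which atom: in the paper's Case 1 the interior atom of the $(-\tfrac12,-1)$ cell is typically negative, so your restriction $0\le z_1\le z_2$ is wrong as well; the correct range, as in $\check{\mathcal{Z}}$, is $-\tfrac{1}{2\rho}\le z_1\le z_2\le\tfrac{1}{2\rho}$. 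Your closing remark about verifying $0\le p\le\tfrac14+\tfrac\beta2$, $0\le q\le\tfrac14-\tfrac\beta2$ is a legitimate point, but it is secondary to the missing $G$-condition argument and the missing Case 2/3 analysis.
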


The proof of Theorem \ref{thm:symmetric-1} is provided in Section
\ref{sec:Proof-of-Theorem-symmetric-1} which is similar to that of
Theorem \ref{thm:symmetric}.

We now focus on the case $\Phi=\Phi_{1}^{\mathrm{sym}}$. Numerical
results show that for the case $\Phi=\Phi_{1}^{\mathrm{sym}}$, $\overline{\Upsilon}_{\rho}$
in Theorem \ref{thm:symmetric-1} (with $\omega(\beta)$ taken as
the minimum of the upper bounds in \eqref{eq:-71} and \eqref{eq:-21})
is (almost) attained by dictator functions for $0\le\rho\le0.83$,
which means that the threshold $\rho^{*}$ in Corollary \ref{cor:CK_Conjecture}
can be further improved to a value around $0.83$. It remains to find
a proof for this observation in the future. We are also interested
in introducing new techniques to attack the case with $\rho>0.83$,
since the optimality of dictator functions for this case seemingly
cannot be proven by our present method.

In following sections, we provide proofs for the results stated above.

\section{\label{sec:Proof-of-Proposition}Proof of Proposition \ref{prop:property} }

\subsection{Statement 2}

We use a perturbation method. Suppose $P_{Z|S}$ is an optimal solutions
to \eqref{eq:generalbound} such that $\mathbb{E}[Z^{2}]<\mathbb{E}[SZ]$.
We first assume that $a+\rho z=0$ or $1$ for all $z$ such that
$P_{Z}(z)>0$, which is equivalent to $Z\in\{-a/\rho,(1-a)/\rho\}$
(with probability one). By \eqref{eq:con2}, we know that $P_{Z}(z)=\begin{cases}
1-a & z=-a/\rho\\
a & z=(1-a)/\rho
\end{cases}$. From this, we have $\mathbb{E}[SZ]\le\sqrt{\mathbb{E}[Z^{2}]\mathbb{E}[S^{2}]}=\rho\mathbb{E}[Z^{2}]$
which contradicts with \eqref{eq:con3}. Hence, our assumption is
false, or equivalently, $0<a+\rho z^{*}<1$ for some $z^{*}$ such
that $P_{Z}(z^{*})>0$.

Let $s^{*}$ be such that $P_{SZ}(s^{*},z^{*})>0$. We next construct
a new conditional distribution $Q_{Z|S}$ by setting $Q_{Z|S}(z|s)=P_{Z|S}(z|s)$
for all $(s,z)\neq(s^{*},z^{*})$, and 
\[
Q_{Z|S}(z^{*}-\delta|s^{*})=Q_{Z|S}(z^{*}+\delta|s^{*})=\frac{1}{2}P_{Z|S}(z^{*}|s^{*}),
\]
where $\delta>0$ is small enough such that $0<a+\rho(z^{*}\pm\delta)<1$.
By the choice of $\delta$, \eqref{eq:con1} holds. It is easy to
see that \eqref{eq:con2} still holds. Furthermore, observe that the
RHS of \eqref{eq:con3} remains unchanged, and the LHS of \eqref{eq:con3}
is continuous in $\delta$. Hence for sufficiently small but positive
$\delta$, \eqref{eq:con3} still holds. Since $\Phi$ is strictly
convex, $\mathbb{E}[\Phi(a+\rho Z)]$ increases after replacing $P_{Z|S}$
with $Q_{Z|S}$. This contradicts with the optimality of $P_{Z|S}$.
This completes the proof of Statement 2.

\subsection{Statement 3}

We continue to use a perturbation method to prove Statement 3. Let
$(z_{s,i},p_{s,i})_{(s,i)\in\mathcal{S}\times[m]}$ be an optimal
solution to the maximization in \eqref{eq:generalboundsimple}. Let
$P_{SZ}$ be the joint distribution induced by $(z_{s,i},p_{s,i})_{(s,i)\in\mathcal{S}\times[m]}$.
By Statement 2, $\mathbb{E}_{P}[Z^{2}]=\mathbb{E}_{P}[SZ]$ under
$P_{SZ}$.

Suppose that $z_{-a,i}>z_{1-a,j}$ for some $i,j\in[m]$. Then, for
a sufficiently small $\epsilon>0$, we define a new distribution $Q_{SZ}$
by replacing $(z_{-a,i},p_{-a,i})\leftarrow(z_{-a,i},p_{-a,i}-\epsilon)$,
$(z_{1-a,j},p_{1-a,j})\leftarrow(z_{1-a,j},p_{1-a,j}-\epsilon)$ and
introducing new points $(z_{-a,m+1},p_{-a,m+1})\leftarrow(z_{1-a,j},\epsilon)$,
$(z_{1-a,m+1},p_{1-a,m+1})\leftarrow(z_{-a,i},\epsilon)$. We do not
change other parameters. For this new distribution, it is easily seen
that the marginal distributions of $Z$ and $S$ are unchanged. So,
constraints in \eqref{eq:con1-1}-\eqref{eq:con3-1} are still satisfied
by $Q_{SZ}$, and the value of the objective function induced by $Q_{SZ}$
remains unchanged. As for constraint in \eqref{eq:con4-1}, $\mathbb{E}[Z^{2}]$
remains the same, and $\mathbb{E}_{Q}[SZ]-\mathbb{E}_{P}[SZ]=\epsilon(z_{-a,i}-z_{1-a,j})>0.$
So, $\mathbb{E}_{Q}[SZ]>\mathbb{E}_{P}[SZ]=\mathbb{E}_{P}[Z^{2}]=\mathbb{E}_{Q}[Z^{2}]$.
Hence, \eqref{eq:con4-1} is satisfied by $Q_{SZ}$, which further
implies that $Q_{SZ}$ is an optimal solution to \eqref{eq:generalboundsimple}.
However, by Statement 2, $\mathbb{E}_{Q}[SZ]=\mathbb{E}_{Q}[Z^{2}]$
should hold, which leads to a contradiction. Therefore, $z_{-a,i}\le z_{1-a,j}$
for all $i,j\in[m]$.

Suppose that $z_{-a,i}\le z_{1-a,j}<z_{-a,i}+\frac{1}{2}$ for some
$i,j\in[m]$ such that $z_{-a,i}>\frac{-a}{\rho},\,z_{1-a,j}<\frac{1-a}{\rho}$.
Then, we define a new distribution $Q_{SZ}$ by replacing $z_{-a,i}\leftarrow z_{-a,i}-\epsilon,\;z_{1-a,j}\leftarrow z_{1-a,j}+\epsilon'$
for some $\epsilon,\epsilon'>0$ such that $p_{-a,i}\epsilon=p_{1-a,j}\epsilon'$.
By definition, $Q_{SZ}$ does not change $\mathbb{E}[Z]$, but it
enlarges $\mathbb{E}[SZ]-\mathbb{E}[Z^{2}]$ for small enough $\epsilon,\epsilon'>0$
since 
\begin{align*}
 & (\mathbb{E}_{Q}[SZ]-\mathbb{E}_{Q}[Z^{2}])-(\mathbb{E}_{P}[SZ]-\mathbb{E}_{P}[Z^{2}])\\
 & =(p_{-a,i}\epsilon a+p_{1-a,j}\epsilon'\bar{a})-(-p_{-a,i}\epsilon(2z_{-a,i}-\epsilon)+p_{1-a,j}\epsilon'(2z_{1-a,j}+\epsilon'))\\
 & =p_{-a,i}\epsilon(1-2(z_{1-a,j}-z_{-a,i})-(\epsilon+\epsilon')),
\end{align*}
which is positive for small enough $\epsilon,\epsilon'>0$. Hence,
$Q_{SZ}$ is feasible. On the other hand, $Q_{SZ}$ also enlarges
the objective function. This is because, by denoting $\hat{\Phi}(z)=\Phi(a+\rho z)$,
\begin{align*}
 & \mathbb{E}_{Q}[\hat{\Phi}(Z)]-\mathbb{E}_{P}[\hat{\Phi}(Z)]\\
 & =p_{-a,i}\hat{\Phi}(z_{-a,i}-\epsilon)+p_{1-a,j}\hat{\Phi}(z_{1-a,j}+\epsilon')-p_{-a,i}\hat{\Phi}(z_{-a,i})-p_{1-a,j}\hat{\Phi}(z_{1-a,j})\\
 & =(p_{1-a,j}\epsilon')\frac{\hat{\Phi}(z_{1-a,j}+\epsilon')-\hat{\Phi}(z_{1-a,j})}{\epsilon'}-(p_{-a,i}\epsilon)\frac{\hat{\Phi}(z_{-a,i})-\hat{\Phi}(z_{-a,i}-\epsilon)}{\epsilon}
\end{align*}
which is positive since $p_{1-a,j}\epsilon'=p_{-a,i}\epsilon$, and
by the convexity of $\Phi$, $\frac{\hat{\Phi}(z_{1-a,j}+\epsilon')-\hat{\Phi}(z_{1-a,j})}{\epsilon'}>\frac{\hat{\Phi}(z_{-a,i})-\hat{\Phi}(z_{-a,i}-\epsilon)}{\epsilon}$
(note that $z_{1-a,j}\ge z_{-a,i}$). Hence, $Q_{SZ}$ induces a larger
value of the objective function than $P_{SZ}$, which contradicts
with the optimality of $P_{SZ}$. Hence, $z_{1-a,j}\ge z_{-a,i}+\frac{1}{2}$
for all $i,j\in[m]$ such that $z_{-a,i}>\frac{-a}{\rho},\,z_{1-a,j}<\frac{1-a}{\rho}$.

If $\{z_{1-a,j}\}_{j\in[m]}=\{\frac{1-a}{\rho}\}$, then by \eqref{eq:con3-3},
$\{z_{-a,i}\}_{i\in[m]}=\{\frac{-a}{\rho}\}$. Hence, we have $Z=\frac{S}{\rho}$,
and hence, $\mathbb{E}[Z^{2}]=\frac{1}{\rho}\mathbb{E}[SZ]$, which
contradicts with $\mathbb{E}[Z^{2}]\le\mathbb{E}[SZ]$. Therefore,
$z_{1-a,j}<\frac{1-a}{\rho}$ for some $j\in[m]$. Similarly, one
can show that $z_{-a,i}>\frac{-a}{\rho}$ for some $i\in[m]$.

\subsection{Statement 4 }

Suppose that LICQ is not satisfied. Denote $(z_{s,i},p_{s,i})_{(s,i)\in\mathcal{S}\times[m]}$
as an optimal solution to the maximization in \eqref{eq:generalboundsimple}.
Here and subsequently, we assume the first $m$ components are indexed
by $\{-a\}\times[m]$ and the last $m$ indexed by $\{1-a\}\times[m]$.
Denote $\mathcal{I}:=\{(s,i):a+\rho z_{s,i}=0\}$ and $\mathcal{J}:=\{(s,i):a+\rho z_{s,i}=1\}$.
By Statement 3, $\mathcal{I}\subseteq\{-a\}\times[m]$ and $\mathcal{J}\subseteq\{1-a\}\times[m]$.

The gradients of the active inequality constraints (including \eqref{eq:con3}
by Statement 2) and the gradients of the equality constraints constitute
the $(|\mathcal{I}|+|\mathcal{J}|+4)\times4m$ matrix 
\begin{equation}
G:=\begin{bmatrix}\rho\mathbf{I}_{\mathcal{I}} & \mathbf{0}_{1\times2m}\\
-\rho\mathbf{I}_{\mathcal{J}} & \mathbf{0}_{1\times2m}\\
\mathbf{0}_{1\times2m} & (\mathbf{I}_{1\times m},\mathbf{0}_{1\times m})\\
\mathbf{0}_{1\times2m} & (\mathbf{0}_{1\times m},\mathbf{I}_{1\times m})\\
(p_{s,i})_{s,i} & (z_{s,i})_{s,i}\\
(p_{s,i}(2z_{s,i}-s))_{s,i} & (z_{s,i}^{2}-sz_{s,i})_{s,i}
\end{bmatrix}.\label{eq:G-3}
\end{equation}
Here, $\mathbf{I}_{\mathcal{I}}$ denotes the $\{0,1\}$-matrix of
size $|\mathcal{I}|\times2m$ with each row containing exactly one
``$1$'', and the ``$1$'' at the $r$-th row is located at the
column indexed by the $r$-th element in $\mathcal{I}$. Obviously,
the last $m$ columns of $\mathbf{I}_{\mathcal{I}}$ consist of zeros.
The matrix $\mathbf{I}_{\mathcal{J}}$ is defined similarly, and hence,
the first $m$ columns of $\mathbf{I}_{\mathcal{J}}$ consist of zeros.
The assumption that LICQ is not satisfied implies that 
\[
\hat{G}:=\begin{bmatrix}(p_{s,i})_{(s,i)\in(\mathcal{I}\cup\mathcal{J})^{c}} & (z_{s,i}-z_{s,1})_{(s,i)\in\mathcal{S}\times[2:m]}\\
(p_{s,i}(2z_{s,i}-s))_{(s,i)\in(\mathcal{I}\cup\mathcal{J})^{c}} & ((z_{s,i}-z_{s,1})(z_{s,i}+z_{s,1}-s))_{(s,i)\in\mathcal{S}\times[2:m]}
\end{bmatrix}
\]
is not of full rank. We next prove this is impossible.

Since the submatrix $\begin{bmatrix}(p_{s,i})_{(s,i)\in(\mathcal{I}\cup\mathcal{J})^{c}}\\
(p_{s,i}(2z_{s,i}-s))_{(s,i)\in(\mathcal{I}\cup\mathcal{J})^{c}}
\end{bmatrix}$ of $\hat{G}$ is not of full rank, we know that $z_{-a,i}=z_{1}$
or $-a/\rho$ and $z_{1-a,i}=z_{2}$ or $(1-a)/\rho$ for all $i\in[m]$
and some $z_{1},z_{2}$ such that $-a-2z_{1}=1-a-2z_{2}$. Furthermore,
since the submatrix $\begin{bmatrix}(p_{-a,i})_{(-a,i)\in(\mathcal{I}\cup\mathcal{J})^{c}} & (z_{-a,i}-z_{-a,1})_{i\in[2:m]}\\
(p_{-a,i}(2z_{1}+a))_{(-a,i)\in(\mathcal{I}\cup\mathcal{J})^{c}} & ((z_{-a,i}-z_{-a,1})(z_{-a,i}+z_{-a,1}+a))_{i\in[2:m]}
\end{bmatrix}$ of $\hat{G}$ is also not of full rank, we know that $z_{-a,i}$'s
are identical for all $i$, and combining this with Statement 3 implies
that $z_{-a,i}=z_{1}$ for all $i\in[m]$. Similarly, $z_{1-a,i}=z_{2}$
for all $i\in[m]$. Additionally, $z_{1},z_{2}$ should satisfy $\mathbb{E}[Z]=0,\mathbb{E}[Z^{2}]=\mathbb{E}[SZ]$,
and hence, we have $Z=0$ or $Z=S$. This contradicts with $-a-2z_{1}=1-a-2z_{2}$,
which implies that LICQ is satisfied.

\section{\label{sec:Proof-of-Theorem-symmetric}Proof of Theorem \ref{thm:symmetric}}

The Lagrangian of the optimization problem in \eqref{eq:generalboundsimple}
is 
\begin{align*}
\mathcal{L}:= & \sum_{s,i}p_{s,i}\Big\{\Phi(a+\rho z_{s,i})+\theta_{s,i}(a+\rho z_{s,i})+\theta_{s,i}'(1-(a+\rho z_{s,i}))+\lambda(sz_{s,i}-z_{s,i}^{2})+\eta z_{s,i}\Big\}\\
 & \qquad+\sum_{s}\mu_{s}(\sum_{i}p_{s,i}-P_{S}(s)).
\end{align*}
Since LICQ is satisfied, the KKT theorem, e.g., \cite[Theorem 5.3.1 ]{bazaraa2013nonlinear},
is valid, and then we obtain the following first-order necessary conditions
for (local) optimal solutions: 
\begin{align}
 & \frac{\partial\mathcal{L}}{\partial z_{s,i}}=p_{s,i}[\rho(\Phi'(a+\rho z_{s,i})+\theta_{s,i}-\theta_{s,i}')+\lambda(s-2z_{s,i})+\eta]=0\label{eq:dLagrangian}\\
 & \frac{\partial\mathcal{L}}{\partial p_{s,i}}=\Phi(a+\rho z_{s,i})+\theta_{s,i}(a+\rho z_{s,i})+\theta_{s,i}'(1-(a+\rho z_{s,i}))+\lambda(sz_{s,i}-z_{s,i}^{2})+\eta z_{s,i}+\mu_{s}\\
 & \qquad=\Phi(a+\rho z_{s,i})+\lambda(sz_{s,i}-z_{s,i}^{2})+\eta z_{s,i}+\mu_{s}=0\\
 & 0\leq a+\rho z_{s,i}\leq1,\forall(s,i)\in\mathcal{S}\times[m]\label{eq:-78-2-2}\\
 & \sum_{s,i}p_{s,i}z_{s,i}=0\\
 & \sum_{s,i}p_{s,i}(sz_{s,i}-z_{s,i}^{2})=0\label{eq:-70-2-2}\\
 & \sum_{i}p_{s,i}=P_{S}(s),\forall s\in\mathcal{S}\\
 & p_{s,i}>0,\forall(s,i)\in\mathcal{S}\times[m]\\
 & \theta_{s,i}(a+\rho z_{s,i})=0\\
 & \theta_{s,i}'(1-a-\rho z_{s,i})=0\\
 & \lambda\geq0,\overrightarrow{\theta},\overrightarrow{\theta'}\geq\overrightarrow{0},\label{eq:-17}
\end{align}
where the equality in \eqref{eq:-70-2-2} follows from Statement 2
of Proposition \ref{prop:property}. Here, $\overrightarrow{\theta},\overrightarrow{\theta'}\geq\overrightarrow{0}$
denotes all the components of $\overrightarrow{\theta},\overrightarrow{\theta'}$
are nonnegative.

For $(s,i)$ such that $-a/\rho<z_{s,i}<(1-a)/\rho$, we have $\theta_{s,i}=\theta_{s,i}'=0$.
Denote 
\begin{align}
g(s,z) & :=\rho\Phi'(a+\rho z)+\lambda(s-2z)+\eta,\label{eq:g}\\
G(s,z) & :=\int_{0}^{z}g(s,t)\mathrm{d}t.\nonumber 
\end{align}
Hence, for $(s,i)$ such that $-a/\rho<z_{s,i}<(1-a)/\rho$, it holds
that 
\begin{align}
\frac{1}{p_{s,i}}\frac{\partial\mathcal{L}}{\partial z_{s,i}} & =g(s,z_{s,i})=0,\label{eq:geq}
\end{align}
and moreover, for all $(s,i)$, it holds that 
\begin{align}
\frac{\partial\mathcal{L}}{\partial p_{s,i}} & =G(s,z_{s,i})+\Phi(a)+\mu_{s}=0.\label{eq:Geq}
\end{align}
Equations \eqref{eq:geq} and \eqref{eq:Geq} imply that given $s$,
for all $z\in\{z_{s,i}\}_{i\in[m]}$, $g(s,z)$ is always equal to
$0$ and $G(s,z)$ remains the same.

\begin{figure}
\centering \subfloat[{{\label{fig:plots-1} $\Phi'$ is concave on $(0,\frac{1}{2}]$.}}]{ \tikzset{every picture/.style={line width=0.75pt}} 
\begin{tikzpicture}[x=0.75pt,y=0.75pt,yscale=-1,xscale=1,scale=.7] 
\draw    (174.49,488.96) .. controls (174.24,357.26) and (296.49,448.96) .. (296.53,314.32) ; 
\draw    (151,403.29) -- (349.27,403.29) ;
\draw [shift={(351.27,403.29)}, rotate = 180] [color={rgb, 255:red, 0; green, 0; blue, 0 }  ][line width=0.75]    (10.93,-3.29) .. controls (6.95,-1.4) and (3.31,-0.3) .. (0,0) .. controls (3.31,0.3) and (6.95,1.4) .. (10.93,3.29)   ; 
\draw    (313.27,313.62) -- (158.93,467.96) ; 
\draw  [dash pattern={on 0.84pt off 2.51pt}]  (296.53,314.32) -- (296.53,403.62) ; 
\draw    (314.5,334.39) -- (162.93,485.96) ; 
\draw    (195.27,394.12) .. controls (206.67,400.77) and (207.24,400.2) .. (215.61,408.25) ;
\draw [shift={(217,409.59)}, rotate = 224.22] [color={rgb, 255:red, 0; green, 0; blue, 0 }  ][line width=0.75]    (8.74,-2.63) .. controls (5.56,-1.12) and (2.65,-0.24) .. (0,0) .. controls (2.65,0.24) and (5.56,1.12) .. (8.74,2.63)   ; 
\draw  [dash pattern={on 0.84pt off 2.51pt}]  (174.49,404.12) -- (174.49,488.96) ; 
\draw    (155.27,438.12) .. controls (166.31,438.12) and (171.42,440.66) .. (178.19,443.77) ;
\draw [shift={(180,444.59)}, rotate = 204.17] [color={rgb, 255:red, 0; green, 0; blue, 0 }  ][line width=0.75]    (8.74,-2.63) .. controls (5.56,-1.12) and (2.65,-0.24) .. (0,0) .. controls (2.65,0.24) and (5.56,1.12) .. (8.74,2.63)   ; 
\draw    (271.27,322.57) .. controls (282.85,324.5) and (278.6,322.7) .. (292.42,329.78) ;
\draw [shift={(294,330.59)}, rotate = 207.03] [color={rgb, 255:red, 0; green, 0; blue, 0 }  ][line width=0.75]    (8.74,-2.63) .. controls (5.56,-1.12) and (2.65,-0.24) .. (0,0) .. controls (2.65,0.24) and (5.56,1.12) .. (8.74,2.63)   ;
\draw (341,406.29) node [anchor=north west][inner sep=0.75pt]    {$z$}; 
\draw (177,375.96) node [anchor=north west][inner sep=0.75pt]    {$z_{2}$}; 
\draw (251.13,403.29) node [anchor=north west][inner sep=0.75pt]    {$( 1-a) /\rho $}; 
\draw (129.13,400.29) node [anchor=north west][inner sep=0.75pt]    {$-a/\rho $}; 
\draw (137,426.96) node [anchor=north west][inner sep=0.75pt]    {$z_{1}$}; 
\draw (253,308.96) node [anchor=north west][inner sep=0.75pt]    {$z_{3}$}; 
\draw (225,288) node [anchor=north west][inner sep=0.75pt]    {$\rho \Phi '( a+\rho z)$}; 
\draw (313,300.23) node [anchor=north west][inner sep=0.75pt]    {$\lambda ( 2z+a) -\eta $}; 
\draw (314,325.23) node [anchor=north west][inner sep=0.75pt]    {$\lambda ( 2z+a-1) -\eta $};
\end{tikzpicture}

}\subfloat[{{\label{fig:plots-2} $\Phi'$ is convex on $(0,\frac{1}{2}]$.}}]{ \tikzset{every picture/.style={line width=0.75pt}} 
\begin{tikzpicture}[x=0.75pt,y=0.75pt,yscale=-1,xscale=1,scale=.7] 
\draw    (208.49,708.47) .. controls (320.49,708.47) and (214.75,620.5) .. (330.53,620.9) ; 
\draw    (185,665.22) -- (383.27,665.22) ;
\draw [shift={(385.27,665.22)}, rotate = 180] [color={rgb, 255:red, 0; green, 0; blue, 0 }  ][line width=0.75]    (10.93,-3.29) .. controls (6.95,-1.4) and (3.31,-0.3) .. (0,0) .. controls (3.31,0.3) and (6.95,1.4) .. (10.93,3.29)   ; 
\draw    (339.27,585.55) -- (191.93,732.89) ; 
\draw  [dash pattern={on 0.84pt off 2.51pt}]  (330.53,576.25) -- (330.53,665.55) ; 
\draw    (340.27,602.39) -- (194.93,747.89) ; 
\draw  [dash pattern={on 0.84pt off 2.51pt}]  (208.49,666.05) -- (208.49,750.89) ; 
\draw    (281.27,608.05) .. controls (292.73,615.69) and (286.86,611.47) .. (296.53,620.2) ;
\draw [shift={(298,621.52)}, rotate = 221.75] [color={rgb, 255:red, 0; green, 0; blue, 0 }  ][line width=0.75]    (8.74,-2.63) .. controls (5.56,-1.12) and (2.65,-0.24) .. (0,0) .. controls (2.65,0.24) and (5.56,1.12) .. (8.74,2.63)   ; 
\draw    (201.27,694.05) .. controls (212.73,701.69) and (206.86,697.47) .. (216.53,706.2) ;
\draw [shift={(218,707.52)}, rotate = 221.75] [color={rgb, 255:red, 0; green, 0; blue, 0 }  ][line width=0.75]    (8.74,-2.63) .. controls (5.56,-1.12) and (2.65,-0.24) .. (0,0) .. controls (2.65,0.24) and (5.56,1.12) .. (8.74,2.63)   ; 
\draw    (246.27,645.57) .. controls (257.85,647.5) and (253.6,645.7) .. (267.42,652.78) ;
\draw [shift={(269,653.59)}, rotate = 207.03] [color={rgb, 255:red, 0; green, 0; blue, 0 }  ][line width=0.75]    (8.74,-2.63) .. controls (5.56,-1.12) and (2.65,-0.24) .. (0,0) .. controls (2.65,0.24) and (5.56,1.12) .. (8.74,2.63)   ;
\draw (375,668.22) node [anchor=north west][inner sep=0.75pt]    {$z$}; 
\draw (285.13,665.22) node [anchor=north west][inner sep=0.75pt]    {$( 1-a) /\rho $}; 
\draw (163.13,662.22) node [anchor=north west][inner sep=0.75pt]    {$-a/\rho $}; 
\draw (184,682.89) node [anchor=north west][inner sep=0.75pt]    {$z_{1}$}; 
\draw (261,593.89) node [anchor=north west][inner sep=0.75pt]    {$z_{3}$}; 
\draw (339,593.23) node [anchor=north west][inner sep=0.75pt]    {$\lambda ( 2z+a-1) -\eta $}; 
\draw (338,571.23) node [anchor=north west][inner sep=0.75pt]    {$\lambda ( 2z+a) -\eta $}; 
\draw (192.53,613.9) node [anchor=north west][inner sep=0.75pt]    {$\rho \Phi '( a+\rho z)$}; 
\draw (228,631.96) node [anchor=north west][inner sep=0.75pt]    {$z_{2}$};
\end{tikzpicture}

}\caption{\label{fig:plots} Plots of $z\protect\mapsto\rho\Phi'(a+\rho z)$,
$z\protect\mapsto\lambda(2z+a)-\eta$, and $z\protect\mapsto\lambda(2z+a-1)-\eta$.}
\end{figure}

\subsection{\label{subsec:Statement-1}Statement 1}

By assumption, $\Phi$ is symmetric w.r.t. $\frac{1}{2}$ and $\Phi'$
is increasing, continuous on $(0,1)$, and strictly concave on $(0,\frac{1}{2}]$.
Hence, for each $s$, $g(s,z)=0$ with $z$ unknown has at most three
distinct solutions. For the case of three distinct solutions, denote
the solutions as $z_{1},z_{2},z_{3}$ such that $-a/\rho=:z_{0}<z_{1}<z_{2}<z_{3}<z_{4}:=(1-a)/\rho$.
Obviously, for these solutions, the values of $G(s,z_{i})$ and $G(s,z_{i+1})$
for $i\in\{1,2\}$ are different (note that $G$ is an antiderivative
of $g$). Hence, given $s$, $z_{i}$ and $z_{i+1}$ cannot be solutions
of \eqref{eq:Geq} at the same time. This is also true when $g(s,z)=0$
(with $z$ unknown) has two distinct solutions.

On the other hand, by Statement 3 of Proposition \ref{prop:property},
for each $s$, $\{z_{s,i}\}_{i\in[m]}$ contains at least one solution
of \eqref{eq:geq} and \eqref{eq:Geq}. Moreover, the solutions to
$g(-a,z)=0$ are the intersection of the curve $z\mapsto\rho\Phi'(a+\rho z)$
and the line $z\mapsto\lambda(2z+a)-\eta$ (with a positive slope),
and the solutions to $g(1-a,z)=0$ are the intersection points of
the same curve $z\mapsto\rho\Phi'(a+\rho z)$ and another line $z\mapsto\lambda(2z+a-1)-\eta$
which is parallel to the previous one. See Fig. \ref{fig:plots-1}.

From the points above, we claim that for each $s$, $\{z_{s,i}\}_{i\in[m]}$
cannot contain two distinct solutions of \eqref{eq:geq} and \eqref{eq:Geq}.
This is because, for example, for $s=-a$, if $\{z_{s,i}\}_{i\in[m]}$
contains two distinct solutions of \eqref{eq:geq} and \eqref{eq:Geq},
then \eqref{eq:geq} and \eqref{eq:Geq} must have three distinct
solutions, and the smallest and the largest among them, denoted by
$z_{1},z_{3}$ respectively, must be contained in $\{z_{s,i}\}_{i\in[m]}$.
In this case, all the solutions of \eqref{eq:geq} and \eqref{eq:Geq}
with $s=1-a$ are smaller than $z_{3}$ (by the observation in the
paragraph above), and hence cannot be contained in $\{z_{1-a,i}\}_{i\in[m]}$
(by the first part of Statement 3 of Proposition \ref{prop:property}).
This contradicts with (the second part of) Statement 3 of Proposition
\ref{prop:property}. Hence, the claim holds for $s=-a$. For $s=1-a$,
the claim can be proven similarly.

By the claim above, 
\begin{equation}
\{z_{-a,i}\}_{i\in[m]}\subseteq\{-a/\rho,\hat{z}_{1}\}\qquad\textrm{ and }\qquad\{z_{1-a,i}\}_{i\in[m]}\subseteq\{(1-a)/\rho,\hat{z}_{2}\},\label{eq:-36}
\end{equation}
where $\hat{z}_{1}$ is a solution of \eqref{eq:geq} and \eqref{eq:Geq}
with $s=-a$, and $\hat{z}_{2}$ is a solution of \eqref{eq:geq}
and \eqref{eq:Geq} with $s=1-a$. By Statement 3 of Proposition \ref{prop:property},
$\hat{z}_{2}-\hat{z}_{1}\geq\frac{1}{2}$. This means, optimal solutions
$P_{SZ}$ to the optimization problem in \eqref{eq:generalboundsimple}
satisfy 
\[
P_{SZ}(s,z)=\begin{cases}
1-a-p & s=-a,z=\frac{-a}{\rho}\\
p & s=-a,z=\hat{z}_{1}\\
q & s=1-a,z=\hat{z}_{2}\\
a-q & s=1-a,z=\frac{1-a}{\rho}
\end{cases}
\]
for some $(p,q,\hat{z}_{1},\hat{z}_{2})$ such that $0\le p\leq1-a,0\le q\leq a,-\frac{a}{\rho}<\hat{z}_{1}\le\hat{z}_{2}<\frac{1-a}{\rho}$,
$\hat{z}_{2}-\hat{z}_{1}\geq\frac{1}{2}$, and 
\begin{align}
 & \mathbb{E}Z=0,\qquad\mathbb{E}[SZ]=\mathbb{E}[Z^{2}].\label{eq:-16-2}
\end{align}
Solving equations in \eqref{eq:-16-2} with respect to unknowns $(z_{1},z_{2})$,
we obtain $p,q$ given in \eqref{eq:-7} and \eqref{eq:-8}. Since
$\hat{z}_{2}\geq\hat{z}_{1}$, $0\le p\le1-a$, and $0\le q\le a$,
we know that 
\begin{align*}
 & \hat{z}_{2}-\hat{z}_{1}\geq\frac{(1-\rho)\hat{z}_{2}}{1-a-\rho\hat{z}_{2}}\\
 & \hat{z}_{2}-\hat{z}_{1}\geq-\frac{(1-\rho)\hat{z}_{1}}{a+\rho\hat{z}_{1}}.
\end{align*}
Hence, the first part of Statement 1 holds.

We next prove the second part of Statement 1, in which $a=1/2$. Denote
$\delta_{x}$ as the Dirac measure at $x$. For an optimal solution
$z_{s,i},p_{s,i},(s,i)\in\mathcal{S}\times[m]$ to the optimization
problem in \eqref{eq:generalboundsimple} which must satisfy \eqref{eq:-36},
we now construct a new distribution $Q_{SZ}=\sum_{s,i}\frac{1}{2}p_{s,i}\cdot(\delta_{s,z_{s,i}}+\delta_{-s,-z_{s,i}})$
which is still optimal for the optimization in \eqref{eq:generalboundsimple}
with $m$ replaced by $2m$. Obviously, $Q_{SZ}$ should also satisfy
\eqref{eq:-36} since the above arguments still work when $m$ is
replaced by $2m$. Hence, $\hat{z}_{1}+\hat{z}_{2}=0$ holds.

\subsection{Statement 2}

We next consider the case in which $\Phi'$ is increasing, continuous
on $(0,1)$, and strictly convex on $(0,\frac{1}{2}]$. Let $z_{s,i},p_{s,i},(s,i)\in\mathcal{S}\times[m]$
be an optimal solution to the maximization problem in \eqref{eq:generalboundsimple}.
We next derive necessary conditions for the optimality of this solution.
For this case, the equation \eqref{eq:geq} still has at most three
distinct solutions.

We first consider the case $g(-a,-a/\rho)>0$ with $g$ defined in
\eqref{eq:g}. For this case, at $(s,z)=(-a,-a/\rho)$, $\left.\frac{1}{p_{s,i}}\frac{\partial\mathcal{L}}{\partial z_{s,i}}\right|_{(-a,-a/\rho)}=g(-a,-a/\rho)+\theta>0$.
Hence, $\{z_{-a,i}\}_{i\in[m]}$ does not contain $-a/\rho$, and
does contain at most two distinct solutions of \eqref{eq:geq} and
\eqref{eq:Geq} with $s=-a$ (by arguments similar to those in the
previous subsection).

We now claim that if $g(-a,-a/\rho)>0$, and meanwhile, $\{z_{-a,i}\}_{i\in[m]}$
contains exactly two distinct solutions of \eqref{eq:geq} and \eqref{eq:Geq}
(with $s=-a$), then $\{z_{-a,i}\}_{i\in[m]}=\{\tilde{z},b-\tilde{z}\}$
with $b=\frac{1-2a}{\rho}$ for some $\tilde{z}>-a/\rho$, and moreover,
$\{z_{1-a,i}\}_{i\in[m]}$ is a singleton. We next prove this claim.
If $\{z_{-a,i}\}_{i\in[m]}$ contains exactly two distinct solutions
of \eqref{eq:geq} and \eqref{eq:Geq} (with $s=-a$), then $g(-a,z)=0$
has three distinct solutions, denoted by $-a/\rho<z_{1}<z_{2}<z_{3}\le(1-a)/\rho$,
and the solutions contained in $\{z_{-a,i}\}_{i\in[m]}$ are $z_{1},z_{3}$.
See Fig. \ref{fig:plots-2}. Moreover, by the facts that $G$ is an
antiderivative of $g$ and $\Phi'$ is symmetric w.r.t. $1/2$, it
holds that $G(-a,z_{1})=G(-a,z_{3})$ only if $a+\rho z_{1}+a+\rho z_{3}=1$
(since only in this case, the two areas enclosed by the curve $z\mapsto\rho\Phi'(a+\rho z)$
and the line $z\mapsto\lambda(2z+a)-\eta$ have the same size). In
this case, by Statement 3 of Proposition \ref{prop:property}, for
$s=1-a$, $\{z_{1-a,i}\}_{i\in[m]}$ only contain $(1-a)/\rho$ and/or
the largest solution $\hat{z}_{3}$ to \eqref{eq:geq} and \eqref{eq:Geq}
with $s=1-a$. If $\hat{z}_{3}<(1-a)/\rho$, obviously, $G(-a,\hat{z}_{3})>G(1-a,(1-a)/\rho)$,
and hence, $\hat{z}_{3}$ and $(1-a)/\rho$ cannot be solutions to
\eqref{eq:Geq} at the same time. Hence, $\{z_{1-a,i}\}_{i\in[m]}$
is a singleton and $\{z_{-a,i}\}_{i\in[m]}=\{\tilde{z},b-\tilde{z}\}$
for some $\tilde{z}>-a/\rho$, completing the proof of the claim.

We next consider the case $g(-a,-a/\rho)\le0$. We claim that for
this case, $\{z_{-a,i}\}_{i\in[m]}$ is a singleton. We next prove
it. For this case, the equation \eqref{eq:geq} has at most two distinct
solutions in $(-a/\rho,(1-a)/\rho)$. Since adjacent solutions cannot
be both contained in $\{z_{-a,i}\}_{i\in[m]}$, only one solution
can be contained in $\{z_{-a,i}\}_{i\in[m]}$. In other words, $\{z_{-a,i}\}_{i\in[m]}\subseteq\{-a/\rho,\tilde{z}\}$
for some solution $\tilde{z}$. Suppose that $\{z_{-a,i}\}_{i\in[m]}=\{-a/\rho,\tilde{z}\}$
with $\tilde{z}>-a/\rho$. For this case, by the fact that $G$ is
an antiderivative of $g$, we have that $G(-a,-a/\rho)=G(-a,\tilde{z})$
only if $g(-a,z)=0$ has two distinct solutions in $(-a/\rho,(1-a)/\rho)$,
denoted by $-a/\rho<z_{1}<z_{2}\le(1-a)/\rho$, and the solution contained
in $\{z_{-a,i}\}_{i\in[m]}$ is $z_{2}$. Moreover, $G(-a,-a/\rho)=G(-a,z_{2})$
if and only if the two areas enclosed by the curve $z\mapsto\rho\Phi'(a+\rho z)$
and the lines $z\mapsto\lambda(2z+a)-\eta$ and $z=-a/\rho$ have
the same size. However, since $\Phi'$ is symmetric w.r.t. $1/2$,
the area at the left side of $z=z_{1}$ is strictly larger than the
one at the right side (note that $z_{2}$ is strictly smaller than
$(1-a)/\rho$ by Statement 3 of Proposition \ref{prop:property}).
This leads to a contradiction, and hence, $\{z_{-a,i}\}_{i\in[m]}$
is a singleton and moreover, $\{z_{-a,i}\}_{i\in[m]}=\{\tilde{z}\}$
with $\tilde{z}>-a/\rho$.

By symmetry, similar conclusions can be drawn for $s=1-a$. Summarizing
these points, optimal solutions to the maximization problem in \eqref{eq:generalboundsimple}
satisfy that $\{z_{-a,i}\}_{i\in[m]}\subseteq\{\tilde{z},b-\tilde{z}\}$,
$\{z_{1-a,i}\}_{i\in[m]}\subseteq\{\hat{z},b-\hat{z}\}$, and $\{z_{-a,i}\}_{i\in[m]}$
or $\{z_{1-a,i}\}_{i\in[m]}$ is a singleton, where $b=\frac{1-2a}{\rho}$,
$-a/\rho<\tilde{z}\le\frac{b}{2}\le\hat{z}<(1-a)/\rho$. Moreover,
$\hat{z}\ge\frac{b+1}{2}$ if $\{z_{-a,i}\}_{i\in[m]}$ has size $2$,
and $\tilde{z}\le\frac{b-1}{2}$ if $\{z_{1-a,i}\}_{i\in[m]}$ has
size $2$. We next divide the rest of the proof to three cases.

Case 1: We first consider the case in which both $\{z_{-a,i}\}_{i\in[m]}$
and $\{z_{1-a,i}\}_{i\in[m]}$ are singletons. Solving $\mathbb{E}Z=0,\mathbb{E}[SZ]=\mathbb{E}[Z^{2}]$
yields $Z=0$ or $Z=S$ with probability one. By Statement 3 of Proposition
\ref{prop:property}, $Z=0$ is not an optimal solution to the optimization
problem in \eqref{eq:generalboundsimple}. On the other hand, $Z=S$
results in the following value of the program: 
\[
\mathbb{E}[\Phi(a+\rho S)]=(1-a)\Phi(a+\rho(-a))+a\Phi(a+\rho(1-a)).
\]

Case 2: We next consider the case in which $\{z_{-a,i}\}_{i\in[m]}$
is of size $2$ and $\{z_{1-a,i}\}_{i\in[m]}$ is a singleton. In
this case, the optimal solution to the optimization problem in \eqref{eq:generalboundsimple}
is 
\begin{equation}
P_{SZ}(s,z)=\begin{cases}
1-a-p & s=-a,z=\tilde{z}\\
p & s=-a,z=b-\tilde{z}\\
a & s=1-a,z=\hat{z}
\end{cases}\label{eq:-10}
\end{equation}
for some $(p,\tilde{z},\hat{z})$ such that $0<p<1-a,-a/\rho<\tilde{z}\le\frac{b}{2}\le\hat{z}<(1-a)/\rho$,
and $\hat{z}\ge\frac{b+1}{2}$. Solving $\mathbb{E}Z=0,\mathbb{E}[SZ]=\mathbb{E}[Z^{2}]$
yields the feasible solution satisfying $\hat{z}\ge\frac{b+1}{2}$
which is given by 
\begin{align}
 & \hat{z}=\frac{1}{2}(\sqrt{\frac{\Delta}{a}}+b+1),\label{eq:-25}\\
 & p=\frac{-\sqrt{a\Delta}-(a(b+1-2\tilde{z})+2\tilde{z})}{2(b-2\tilde{z})}\label{eq:-26}
\end{align}
with $\Delta=a(b^{2}-4b\tilde{z}+2b+4\tilde{z}^{2}+1)+4\tilde{z}(b-\tilde{z})$.
To ensure $\Delta\ge0,p\ge0$, it is required that $\tilde{z}\le\frac{-a(1+b)}{2(1-a)}$.
Hence, the optimal value of the program for this case is $\hat{\Gamma}_{\rho}(a)$.

Case 3: We lastly consider the case in which $\{z_{1-a,i}\}_{i\in[m]}$
is of size $2$ and $\{z_{-a,i}\}_{i\in[m]}$ is a singleton. In this
case, the optimal solution to the optimization problem in \eqref{eq:generalboundsimple}
is 
\begin{equation}
P_{SZ}(s,z)=\begin{cases}
1-a & s=-a,z=\tilde{z}\\
p & s=1-a,z=b-\hat{z}\\
a-p & s=1-a,z=\hat{z}
\end{cases}\label{eq:-10-1}
\end{equation}
for some $(p,\tilde{z},\hat{z})$ such that $0<p<a,-a/\rho<\tilde{z}\le\frac{b}{2}\le\hat{z}<(1-a)/\rho$,
and $\tilde{z}\le\frac{b-1}{2}$. By symmetry, substituting $a\leftarrow1-a,\tilde{z}\leftarrow-\hat{z},\hat{z}\leftarrow-\tilde{z}$
(which implies $b\leftarrow-b$) into \eqref{eq:-25} and \eqref{eq:-26},
we obtain the optimal $p,\tilde{z}$ for this case, which results
in the value $\hat{\Gamma}_{\rho}(1-a)$ of the program. Hence, the
first part of Statement 2 holds.

For $a=1/2$, we have $b=0$ and both $\frac{-a(1+b)}{2(1-a)}$ and
$c$ are equal to $-1/2$. Hence, $\tilde{z}=-1/2$ is the unique
feasible solution to the supremization in \eqref{eq:-37}, which results
in the value $\Phi(\frac{1-\rho}{2})$. Furthermore, for $a=1/2$,
$\mathbb{E}[\Phi(a+\rho S)]$ is the same value. Hence, the second
part of Statement 2 follows.

\section{\label{sec:Proof-of-Theorem-CKConjecture-1}Proof of Corollary \ref{cor:CK_Conjecture}}

Excluding the trivial cases, we assume $\rho\in(0,\rho^{*}]$. By
definition, $\Phi=\Phi_{\alpha}^{\mathrm{sym}}$ with $\alpha\in(1,2)$
satisfies the assumption in Statement 1 of Theorem \ref{thm:symmetric},
which means that Statement 1 of Theorem \ref{thm:symmetric} can be
applied to this case. For $a=1/2$, we have $z_{1}+z_{2}=0$, which
further implies $p=q$. From $(z_{1},z_{2})\in\overline{\mathcal{Z}}$,
we know $-\frac{1}{2}\leq z_{1}\leq-\frac{1}{4}$. Denote the bound
in \eqref{eq:generalbound} for $\Phi=\Phi_{\alpha}^{\mathrm{sym}}$
as $\overline{\Gamma}_{\rho}^{(\alpha)}(a)$ where $\alpha\in[1,2)$.
Then, by the monotonicity of $\Phi_{\alpha}^{\mathrm{sym}}$ in $\alpha$,
we have for $\alpha\in(1,2)$, 
\begin{align*}
\overline{\Gamma}_{\rho}^{(1)}(\frac{1}{2}) & \leq\lim_{\alpha\downarrow1}\overline{\Gamma}_{\rho}^{(\alpha)}(\frac{1}{2})=\lim_{\alpha\downarrow1}\max_{-\frac{1}{2}\leq z_{1}\leq-\frac{1}{4}}h_{\alpha}(z_{1}),
\end{align*}
where 
\begin{align*}
h_{\alpha}(z_{1}) & :=2p\Phi_{\alpha}^{\mathrm{sym}}(\frac{1}{2}+\rho z_{1})\\
 & =\frac{1-\rho}{(1+2\rho z_{1})(1-\rho-2\rho z_{1})}\Phi_{\alpha}^{\mathrm{sym}}(\frac{1}{2}+\rho z_{1}).
\end{align*}

We now swap the limit and maximization. Denote $\{\alpha_{k}\}$ as
a decreasing sequence with limit $1$. Denote $z_{1,k}$ as an optimal
solution to the optimization $\max_{-\frac{1}{2}\leq z_{1}\leq-\frac{1}{4}}h_{\alpha}(z_{1})$
with $\alpha=\alpha_{k}$. By passing to a subsequence, we assume
that $\{z_{1,k}\}$ converges to some $z_{1}^{*}$. Then, by the continuity
of $h_{\alpha}(z_{1})$ in $(\alpha,z_{1})$, $\overline{\Gamma}_{\rho}^{(\alpha_{k})}(\frac{1}{2})=h_{\alpha_{k}}(z_{1,k})\to h_{1}(z_{1}^{*})\le\max_{-\frac{1}{2}\leq z_{1}\leq-\frac{1}{4}}h_{1}(z_{1})$,
which implies $\overline{\Gamma}_{\rho}^{(1)}(\frac{1}{2})\leq\max_{-\frac{1}{2}\leq z_{1}\leq-\frac{1}{4}}h(z_{1}),$
with $h(z_{1}):=h_{1}(z_{1})$ for brevity. More explicitly, 
\begin{align*}
h(z_{1}) & =\frac{1-\rho}{(1+2\rho z_{1})(1-\rho-2\rho z_{1})}((\frac{1}{2}+\rho z_{1})\log(\frac{1}{2}+\rho z_{1})+(\frac{1}{2}-\rho z_{1})\log(\frac{1}{2}-\rho z_{1})).
\end{align*}
To compute the maximum of $h(z_{1})$ over $-\frac{1}{2}\leq z_{1}\leq-\frac{1}{4}$,
we take its derivative which is a standard technique. That is, 
\begin{equation}
h'(z_{1})=\frac{-\rho(1-\rho)\varphi_{\rho}(z_{1})}{(1+2\rho z_{1}){}^{2}(1-\rho-2\rho z_{1}){}^{2}},\label{eq:-40}
\end{equation}
where 
\[
\varphi_{\rho}(z_{1}):=((1-2\rho z_{1})^{2}-2\rho)\log(\frac{1}{2}-\rho z_{1})-(1+2\rho z_{1})^{2}\log(\frac{1}{2}+\rho z_{1}).
\]

We claim that $h$ is non-increasing given $\rho\in(0,\rho^{*}]$.
We next prove it. It is easy to verify that 
\[
\varphi_{\rho}''(z_{1})=\frac{8\rho^{2}(\rho+(1-2\rho z_{1}){}^{2}\log(\frac{1-2\rho z_{1}}{1+2\rho z_{1}}))}{(1-2\rho z_{1}){}^{2}}\ge0.
\]
Hence, $\varphi'$ is increasing. Furthermore, $\varphi_{\rho}'(-\frac{1}{2})=\frac{4\rho g(\rho)}{\rho+1},$
where 
\[
g(\rho):=-(1-\rho^{2})\log(\frac{1-\rho}{2})-(1+\rho)^{2}\log(\frac{1+\rho}{2})-1.
\]
It is easy to verify that $g''(\rho)\leq0$, and $g(0),g(\frac{1}{2})\geq0$.
Hence, $g(\rho)\geq0$ (i.e., $\varphi_{\rho}'(-\frac{1}{2})\geq0$)
for $\rho\in(0,\frac{1}{2}]$. Combining this with the fact that $\varphi_{\rho}'$
is increasing gives $\varphi_{\rho}'(z_{1})\geq0$ for $z_{1}\in[-\frac{1}{2},0]$
and $\rho\in(0,\frac{1}{2}]$. Hence, for $\rho\in(0,\frac{1}{2}]$,
$\varphi_{\rho}$ is increasing on $[-\frac{1}{2},0]$.

Recall $\psi(\rho)$ defined in \eqref{eq:psi}, and observe that
$\varphi_{\rho}(-\frac{1}{2})=\psi(\rho)$. We now prove $\psi(\rho)\ge0$
for $\rho\in[0,\rho^{*}]$. It is easy to verify $\psi'''(\rho)\ge0$,
and hence $\psi'(\rho)$ is convex. Also, $\psi'(0)\geq0,\psi'(\frac{1}{2})\leq0$.
Hence, $\psi'$ is first-positive-then-negative\footnote{We say a function $f$ is \emph{first-positive-then-negative} if there
exists some real number $a$ such that $f(x)\ge0$ for $x<a$ and
$f(x)\le0$ for $x>a$. We can define a function to be \emph{first-negative-then-positive},
\emph{first-increasing-then-decreasing}, and \emph{first-decreasing-then-increasing}
in a similar way.} on $[0,\frac{1}{2}]$, which means that $\psi$ is first-increasing-then-decreasing
on $[0,\frac{1}{2}]$. Observe that $\psi(0),\psi(\rho^{*})=0$ and
$\rho^{*}\in[0,\frac{1}{2}]$. Hence $\psi(\rho)\ge0$ for $\rho\in(0,\rho^{*}]$,
i.e., $\varphi_{\rho}(-\frac{1}{2})\ge0$.

Combining the facts that $\varphi_{\rho}$ is increasing and $\varphi_{\rho}(-\frac{1}{2})\ge0$
for $\rho\in(0,\rho^{*}]$ yields that $\varphi_{\rho}(z_{1})\ge0$
for $z_{1}\in[-\frac{1}{2},0]$ and $\rho\in(0,\rho^{*}]$. It means
that $h'(z_{1})\leq0$ and hence $h$ is non-increasing given $\rho\in(0,\rho^{*}]$,
completing the proof of the claim above.

Therefore, by the claim above,
\[
\overline{\Gamma}_{\rho}^{(1)}(\frac{1}{2})\le h(-\frac{1}{2})=(\frac{1-\rho}{2})\log(\frac{1-\rho}{2})+(\frac{1+\rho}{2})\log(\frac{1+\rho}{2}).
\]
Observe that the most RHS above is attained by dictator functions,
which completes the proof.

\section{\label{sec:Proof-of-Theorem-symmetricpower}Proof of Corollary \ref{cor:symmetricpower}}

Excluding the trivial cases, we assume $\rho\in(0,1)$. We next prove
Corollary \ref{cor:symmetricpower}. The case $\alpha\in(1,2)$ is
implied immediately by Lemma \ref{lem:BarnesOzgur} and Corollary
\ref{cor:CK_Conjecture}. For $\alpha\in[2,5]$, by Lemma \ref{lem:BarnesOzgur},
it suffices to only prove the case $\alpha=5$.

For $a=1/2$ and $\Phi(t)=t^{\alpha}+(1-t)^{\alpha}$ with $\alpha=5$,
similarly as in Section \ref{eq:psi}, by applying Statement 1 of
Theorem \ref{thm:symmetric}, we have $z_{1}+z_{2}=0$, and hence
$p=q$, $-\frac{1}{2}\leq z_{1}\leq-\frac{1}{4}$, and 
\[
\overline{\Gamma}_{\rho}(\frac{1}{2})=\sup_{-\frac{1}{2}\leq z_{1}\leq-\frac{1}{4}}2[\frac{1}{2}-p+p\Phi(\frac{1}{2}+\rho z_{1})]
\]
with $p=\frac{1-\rho}{2(1-\rho-2\rho z_{1})(1+2\rho z_{1})}.$ Denoting
$z=-2\rho z_{1}\in[\frac{\rho}{2},\rho]$, we have $\overline{\Gamma}_{\rho}(\frac{1}{2})=\sup_{\frac{\rho}{2}\le z\leq\rho}h(z)$,
where 
\[
h(z):=\frac{-\rho+5(\rho-1)z^{3}+5(\rho-1)z^{2}+15\rho z+z+1}{16(-\rho+z+1)}.
\]
By taking derivatives and using the standard arguments, it can be
observed that $h(z)$ is non-decreasing. Hence $\overline{\Gamma}_{\rho}(a)=h(\rho)=\Phi(\frac{1-\rho}{2})$,
completing the proof. We next provide the details on how to take derivatives
to show that $h(z)$ is non-decreasing. 

Taking derivative for $h$, we have $h'(z)=\frac{5\varphi(z)}{16(-\rho+z+1)^{2}},$
where 
\[
\varphi(z)=-3(\rho-1)\rho+2(\rho-1)z^{3}+(-3\rho^{2}+7\rho-4)z^{2}-2(\rho-1)^{2}z.
\]
Taking derivative again, we further have $\varphi'(z)=-2(1-\rho)(3z+1)(1-\rho+z)\leq0.$
Hence, $\varphi(z)$ is non-increasing, which implies that $\varphi(z)\ge\varphi(\rho)=(1-\rho)^{3}\rho\ge0$,
i.e., $h'(z)\ge0$. Therefore, $h(z)$ is non-decreasing. 

\section{\label{sec:Proof-of-Theorem-asymmetricpower}Proof of Corollary \ref{cor:asymmetricpower}}

Excluding the trivial cases, we assume $\rho\in(0,1)$.

\subsection{Case of $\alpha\in(1,2)$}

Here, we denote $\Phi(t)=t^{\alpha}$ with $\alpha\in(1,2)$. Observe
that $\Phi'$ is concave. By Theorem \ref{thm:derivativeconcave},
for $a=1/2$, 
\[
\widetilde{\Gamma}_{\rho}(\frac{1}{2})=\sup_{0\le z_{2}\le\frac{1}{2}}p(\frac{1}{2}+\rho z_{1})^{\alpha}+\frac{1}{2}(\frac{1}{2}+\rho z_{2})^{\alpha}.
\]
where $p,z_{2}$ are defined in \eqref{eq:z1} and \eqref{eq:p}.
Now we consider $p$ as a free variable, and solving \eqref{eq:z1}
and \eqref{eq:p} w.r.t. $z_{1},z_{2}$, we obtain two solutions:
\begin{align}
 & \begin{cases}
z_{1}^{(1)}=\frac{p(1-2p-\rho)-T(p)}{2(1+2p)p\rho},\\
z_{2}^{(1)}=\frac{\frac{1}{2}-p(1-\rho)+T(p)}{(1+2p)\rho},
\end{cases}\label{eq:-59}
\end{align}
and 
\begin{align}
 & \begin{cases}
z_{1}^{(2)}=\frac{p(1-2p-\rho)+T(p)}{2(1+2p)p\rho},\\
z_{2}^{(2)}=\frac{\frac{1}{2}-p(1-\rho)-T(p)}{(1+2p)\rho},
\end{cases}\label{eq:-60}
\end{align}
where 
\begin{equation}
T(p)=\sqrt{p(p(2-2\rho+\rho^{2})+\rho-1)}.\label{eq:-63}
\end{equation}
To ensure the term in the square root above is nonnegative, it is
required that 
\begin{equation}
\frac{1-\rho}{2-2\rho+\rho^{2}}\le p\le\frac{1}{2}.\label{eq:-61}
\end{equation}
Observe that given $p$, $pz_{1}+\frac{1}{2}z_{2}$ remains the same
for $(z_{1}^{(1)},z_{2}^{(1)})$ and $(z_{1}^{(2)},z_{2}^{(2)})$,
and moreover, $z_{1}^{(1)}\le z_{1}^{(2)}\le z_{2}^{(2)}\le z_{2}^{(1)}$.
By the convexity of $\Phi$, we have that $(z_{1}^{(1)},z_{2}^{(1)})$
leads to a larger value of $p(\frac{1}{2}+\rho z_{1})^{\alpha}+\frac{1}{2}(\frac{1}{2}+\rho z_{2})^{\alpha}$
than $(z_{1}^{(2)},z_{2}^{(2)})$. Hence, 
\[
\widetilde{\Gamma}_{\rho}(\frac{1}{2})\leq\sup_{\frac{1-\rho}{2-2\rho+\rho^{2}}\le p\le\frac{1}{2}}h_{\alpha}(p)
\]
where

\begin{align*}
h_{\alpha}(p) & :=p(\frac{1}{2}+\rho z_{1}^{(1)})^{\alpha}+\frac{1}{2}(\frac{1}{2}+\rho z_{2}^{(1)})^{\alpha}\\
 & =p(\frac{(2-\rho)p-T(p)}{2(1+2p)p})^{\alpha}+\frac{1}{2}(\frac{1+p\rho+T(p)}{1+2p})^{\alpha}.
\end{align*}

Define 
\begin{align}
\varphi(\alpha,p) & :=-A(p)D(p)^{\alpha-1}-\frac{B(p)}{\alpha}+C(p)\label{eq:phi}
\end{align}
where 
\begin{align*}
A(p) & :=p[(-1+6p)(1-\rho)+2p\rho^{2}-2(2-\rho)T(p)]\\
B(p) & :=[(2-\rho)p-T(p)]2(1+2p)T(p)\\
C(p) & :=p[(3-2p-8p^{2})(1-\rho)-4\rho^{2}p^{2}+4p(2-\rho)T(p)]\\
D(p) & :=\frac{(p\rho+1+T(p))2p}{(2-\rho)p-T(p)}.
\end{align*}
It is easy to verify that $A(p),B(p),D(p)\ge0$ for $\frac{1-\rho}{2-2\rho+\rho^{2}}\le p\le\frac{1}{2}$.
Then, 
\begin{equation}
\frac{\mathrm{d}}{\mathrm{d}p}h_{\alpha}(p)=(\frac{(2-\rho)p-T(p)}{2(1+2p)p})^{\alpha-1}\frac{-\alpha\varphi(\alpha,p)}{4(1+2p)^{2}pT(p)}.\label{eq:-38}
\end{equation}

Recall that the function $\theta$ is defined in \eqref{eq:theta-1}.
Obviously, it holds that $\theta^{-1}(\frac{1-\rho}{1+\rho})\ge1$
for $0\le\rho\le1$. 
\begin{lem}
\label{lem:phi}For $\theta^{-1}(\frac{1-\rho}{1+\rho})\le\alpha<2$
and $\frac{1-\rho}{2-2\rho+\rho^{2}}\le p\le\frac{1}{2}$, we have
$\varphi(\alpha,p)\leq0$. 
\end{lem}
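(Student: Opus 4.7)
The plan is to establish the inequality by a combined boundary and interior analysis of $p \mapsto \varphi(\alpha, p)$ on $[p_0, 1/2]$, where $p_0 := \frac{1-\rho}{2-2\rho+\rho^2}$.

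First I would handle the two endpoints. At $p = 1/2$ one has $T(1/2) = \rho/2$, and a direct substitution yields $A(1/2) = (1-\rho)^2$, $B(1/2) = 2\rho(1-\rho)$, $C(1/2) = \rho(1-\rho)$, and $D(1/2) = \frac{1+\rho}{1-\rho}$, so that
\[
\varphi(\alpha, 1/2) = (1-\rho)\Big[\rho - \tfrac{2\rho}{\alpha} - (1-\rho)\big(\tfrac{1+\rho}{1-\rho}\big)^{\alpha-1}\Big].
\]
Both $\rho - 2\rho/\alpha$ and the last exponential term are strictly negative for $1 < \alpha < 2$, so $\varphi(\alpha, 1/2) < 0$. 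At the other endpoint, $T(p_0) = 0$ forces $B(p_0) = 0$, reducing $\varphi(\alpha, p_0)$ to $C(p_0) - A(p_0) D(p_0)^{\alpha-1}$, a one-variable inequality in $\alpha$ that I would verify directly using $p_0\rho + 1 = \frac{2-\rho}{2-2\rho+\rho^2}$ and $D(p_0) = \frac{2}{2-2\rho+\rho^2}$.

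For the interior, I would look for stationary points of $p \mapsto \varphi(\alpha, p)$ on $(p_0, 1/2)$. The equation $\varphi = 0$ reads
\[
A(p)\, D(p)^{\alpha - 1} \;=\; \frac{\alpha C(p) - B(p)}{\alpha},
\]
and combining this with the stationarity condition $\partial_p \varphi = 0$ eliminates the transcendental factor $D(p)^{\alpha-1}$ and yields a purely algebraic relation between $\alpha$ and $p$. Using the identity $((2-\rho)p - T(p))((2-\rho)p + T(p)) = p(1-\rho)(2p+1)$ to simplify the square-root terms, my aim would be to reduce this relation to
\[
t^{2-\alpha} + \frac{1 - t}{\alpha} = 1,
\]
where $t = t(p)$ is a monotone reparametrization of $p$ with $t(1/2) = \frac{1-\rho}{1+\rho}$; a natural candidate is $t = 1/D(p)$, since $1/D(1/2) = \frac{1-\rho}{1+\rho}$. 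The displayed equation is precisely the defining relation \eqref{eq:theta-1} of $\theta(\alpha)$, so the existence of an interior critical zero of $\varphi(\alpha, \cdot)$ forces $t(p) = \theta(\alpha)$.

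Once this identification is in place, the conclusion follows by monotonicity. The hypothesis $\alpha \ge \theta^{-1}(\frac{1-\rho}{1+\rho})$ is equivalent (via the decreasing behavior of $\theta$ on $(1,2)$) to $\theta(\alpha) \le \frac{1-\rho}{1+\rho} = t(1/2)$, so any such interior stationary root would have to lie at $p = 1/2$ or outside the open interval, contradicting the strict negativity already established at $p = 1/2$. Together with the non-positivity at $p_0$, this rules out $\varphi(\alpha, p) > 0$ on the entire interval. The main obstacle will be the algebraic reduction in the third step: the expressions $A, B, C, D$ involve nested square roots, and showing that the coupled system $\{\varphi = 0,\ \partial_p \varphi = 0\}$ collapses cleanly to $t^{2-\alpha} + (1-t)/\alpha = 1$ requires careful bookkeeping around $T(p)^2$ and a verification that the reparametrization $t(p)$ is indeed monotone on $[p_0, 1/2]$ with the required endpoint value.
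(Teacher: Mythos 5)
Your reduction of the lemma to the coupled system $\{\varphi=0,\ \partial_{p}\varphi=0\}$ is the step that fails. If $\varphi(\alpha,\cdot)$ were positive somewhere in $(p_{0},\tfrac12)$ while being $\le 0$ at both endpoints, its maximum over $[p_{0},\tfrac12]$ would be attained at an interior stationary point at which $\varphi>0$, and the two sign changes flanking that bump would be zeros of $\varphi(\alpha,\cdot)$ that need not be stationary; neither configuration solves your coupled system, so the nonexistence of interior ``critical zeros'' rules out nothing, and endpoint non-positivity alone does not give $\varphi\le0$ on the whole interval. The paper controls the interior in a genuinely different way: since $A,B,D\ge0$, $\varphi(\cdot,p)$ is concave in $\alpha$ with $\varphi(1,p)=0$, so one can define the second root $\alpha^{*}(p)$, and the entire difficulty is Lemma \ref{lem:-1}, the monotonicity $\frac{\mathrm{d}}{\mathrm{d}p}\alpha^{*}(p)\ge0$, proved through the implicit function theorem and a delicate sign analysis of $\varphi_{2}(\alpha^{*},p)$ (the $E,F$ inequalities verified by computer algebra); this yields $\sup_{p}\alpha^{*}(p)=\alpha^{*}(\tfrac12)=\theta^{-1}(\tfrac{1-\rho}{1+\rho})$ and hence the lemma. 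Nothing in your proposal substitutes for that monotonicity statement.

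The algebraic identification you hope for is also not available. Collapsing $\varphi=0$ to $t^{2-\alpha}+\frac{1-t}{\alpha}=1$ with $t=1/D(p)$ requires the identities $A(p)D(p)=C(p)$ and $D(p)\,(C(p)-B(p))=C(p)$; these hold at $p=\tfrac12$ but fail for general $p$ (e.g.\ numerically at $\rho=\tfrac12$, $p=0.45$ one has $A D\neq C$), and appending the stationarity equation does not manufacture them, so there is no reason the coupled system reduces to \eqref{eq:theta-1} --- a step you yourself flag as the main obstacle and leave undone, as you do the $p_{0}$ endpoint and the monotonicity of $t(p)$. Finally, the claimed strict negativity of $\varphi(\alpha,\tfrac12)$ for every $\alpha\in(1,2)$ is incompatible with the structure the lemma actually rests on: the derivative identity \eqref{eq:-38} forces $\varphi(1,p)=0$, which the printed $C(p)$ in \eqref{eq:phi} does not satisfy (it appears to carry a typo; the version consistent with \eqref{eq:-38} gives $C(\tfrac12)=1-\rho^{2}$ rather than $\rho(1-\rho)$), and with the corrected $\varphi$ one has $\varphi(\alpha,\tfrac12)=0$ exactly at $\alpha=\theta^{-1}(\tfrac{1-\rho}{1+\rho})$, i.e.\ at the left edge of the lemma's range. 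So the contradiction you want to draw from ``strict negativity at $p=\tfrac12$'' evaporates precisely at the threshold value of $\alpha$ the lemma is about, and the fact that your endpoint computation made the threshold hypothesis irrelevant at $p=\tfrac12$ should itself have been a warning sign.
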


By this lemma and \eqref{eq:-38}, for $\theta^{-1}(\frac{1-\rho}{1+\rho})\le\alpha<2$,
$h_{\alpha}$ is non-decreasing for $\frac{1-\rho}{2-2\rho+\rho^{2}}\le p\le\frac{1}{2}$,
which further implies that $\widetilde{\Gamma}_{\rho}(\frac{1}{2})\leq h_{\alpha}(\frac{1}{2})$.
Observe that $h_{\alpha}(\frac{1}{2})=\frac{1}{2}\Phi(\frac{1-\rho}{2})+\frac{1}{2}\Phi(\frac{1+\rho}{2})$
is the $\Phi$-stability of dictator functions. Hence, $\widetilde{\Gamma}_{\rho}(\frac{1}{2})$
is attained by dictator functions for $\theta^{-1}(\frac{1-\rho}{1+\rho})\le\alpha<2$
(or equivalently, for $\rho\leq\frac{1-\theta(\alpha)}{1+\theta(\alpha)}$),
which completes the proof of Corollary \ref{cor:asymmetricpower}
for the case $\alpha\in(1,2)$. Hence, the rest is to prove the lemma
above. 
\begin{proof}[Proof of Lemma \ref{lem:phi}]
Since $A(p),B(p),D(p)\ge0$, by definition, given $p$, $\varphi(\alpha,p)$
is concave in $\alpha$. It is easy to verify that $\varphi(1,p)=0$.
Hence, given $p$, one of the following two statements is true: 1)
$\varphi(\alpha,p)\le0$ for any $\alpha>0$; 2) the equation $\varphi(\alpha,p)=0$
with unknown $\alpha$ has exactly two distinct solutions, $1$ and
$\alpha^{*}(p)>0$. If the first statement is true, then Lemma \ref{lem:phi}
holds for such $p$. Hence, it suffices to consider the case in which
the second statement is true.

If $p$ satisfies that $\alpha^{*}(p)<1$, by the concavity of $\varphi(\alpha,p)$
in $\alpha$, we have that $\varphi(\alpha,p)\le0$ for any $\alpha\ge1$.
Hence, Lemma \ref{lem:phi} holds for such $p$.

We next consider the case $\alpha^{*}(p)>1$. Similarly, by the concavity
of $\varphi(\alpha,p)$ in $\alpha$, we have that $\varphi(\alpha,p)\le0$
for any $\alpha\ge\alpha^{*}(p)$. Hence, Lemma \ref{lem:phi} holds
for $\alpha\ge\sup_{\frac{1-\rho}{2-2\rho+\rho^{2}}\le p\le\frac{1}{2}}\alpha^{*}(p)$.
To determine the supremization here, we need the following technical
lemma. Its proof is deferred to Section \ref{subsec:Proof-of-Lemma}. 
\begin{lem}
\label{lem:-1} For any $\frac{1-\rho}{2-2\rho+\rho^{2}}\le p\le\frac{1}{2}$
such that $1<\alpha^{*}(p)<2$, we have $\frac{\mathrm{d}}{\mathrm{d}p}\alpha^{*}(p)\ge0$. 
\end{lem}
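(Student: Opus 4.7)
The plan is to invoke the implicit function theorem on the defining equation $\varphi(\alpha^{*}(p),p)=0$. Since $\varphi(\cdot,p)$ is concave in $\alpha$ and vanishes at exactly the two points $\alpha=1$ and $\alpha=\alpha^{*}(p)$ with $\alpha^{*}(p)>1$, it is strictly decreasing at $\alpha^{*}(p)$, so $\frac{\partial\varphi}{\partial\alpha}\bigl(\alpha^{*}(p),p\bigr)<0$. By implicit differentiation,
\[
\frac{\mathrm{d}\alpha^{*}}{\mathrm{d}p}=-\frac{\partial_{p}\varphi(\alpha^{*}(p),p)}{\partial_{\alpha}\varphi(\alpha^{*}(p),p)},
\]
so the sign of $\frac{\mathrm{d}\alpha^{*}}{\mathrm{d}p}$ equals the sign of $\partial_{p}\varphi(\alpha^{*}(p),p)$, and it suffices to prove $\partial_{p}\varphi(\alpha^{*},p)\ge 0$ throughout the region $\frac{1-\rho}{2-2\rho+\rho^{2}}\le p\le\frac{1}{2}$ with $1<\alpha^{*}<2$.

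Differentiating in \eqref{eq:phi} gives
\[
\partial_{p}\varphi(\alpha,p)=-A'(p)D(p)^{\alpha-1}-(\alpha-1)A(p)D(p)^{\alpha-2}D'(p)-\frac{B'(p)}{\alpha}+C'(p).
\]
The key simplification is to use the constraint $\varphi(\alpha^{*},p)=0$, i.e. $A(p)D(p)^{\alpha^{*}-1}=C(p)-B(p)/\alpha^{*}$, to eliminate the transcendental factor $D(p)^{\alpha^{*}-1}$. After substitution and clearing the common factor $D(p)^{-1}$, the expression $\partial_{p}\varphi(\alpha^{*},p)$ becomes a rational function of $(p,\rho,\alpha^{*})$ and the surd $T(p)$ defined in \eqref{eq:-63}, with no remaining $\alpha^{*}$-th powers. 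Then, squaring out $T(p)^{2}=p\bigl((2-2\rho+\rho^{2})p+\rho-1\bigr)$, we reduce the question to a polynomial inequality in $(p,\rho,\alpha^{*})$.

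The main obstacle will be verifying this polynomial inequality on the specified domain. I expect the cleanest route is to group the terms so that the $\alpha^{*}$-dependence appears as factors of $(\alpha^{*}-1)$ and $(2-\alpha^{*})$, both of which are nonnegative in the regime of interest, and to exploit the two endpoint specializations as sanity checks: at $p=\frac{1-\rho}{2-2\rho+\rho^{2}}$ one has $T(p)=0$, which collapses many terms, and at $p=\frac{1}{2}$ the constraint $\varphi(\alpha^{*},\tfrac{1}{2})=0$ forces $\alpha^{*}$ to satisfy a simple equation related to \eqref{eq:theta-1}, giving explicit boundary values. With these anchors in place, monotonicity of the polynomial in $p$ (most conveniently shown by differentiating once more and using $\rho\in(0,1)$) should suffice to conclude $\partial_{p}\varphi(\alpha^{*},p)\ge 0$, and hence $\frac{\mathrm{d}}{\mathrm{d}p}\alpha^{*}(p)\ge 0$.
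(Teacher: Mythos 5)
Your first half coincides with the paper's: the implicit function theorem reduces the claim to $\partial_{p}\varphi(\alpha^{*}(p),p)\ge 0$ (with $\partial_{\alpha}\varphi(\alpha^{*},p)<0$ coming from concavity in $\alpha$ and the two-root structure), and the substitution $A(p)D(p)^{\alpha^{*}-1}=C(p)-B(p)/\alpha^{*}$ is exactly the device the paper uses to eliminate the transcendental power $D(p)^{\alpha^{*}-1}$, leaving an expression rational in $\alpha^{*}$. Up to that point your argument is correct.

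The genuine gap is in what follows: you never actually prove the resulting inequality, which is the entire difficulty of the lemma. Your plan --- square out $T(p)$, hope that the $\alpha^{*}$-dependence can be grouped into $(\alpha^{*}-1)$ and $(2-\alpha^{*})$ factors with nonnegative coefficients, use the endpoints $p=\tfrac{1-\rho}{2-2\rho+\rho^{2}}$ and $p=\tfrac{1}{2}$ as "anchors," and then argue monotonicity in $p$ by differentiating once more --- is entirely speculative ("should suffice"), and nothing in the proposal shows that such a grouping exists or that the resulting expression is monotone in $p$ for fixed $\alpha$; also note that rationalizing an expression of the form $U+VT$ requires a sign/case analysis on $U$ and $V$, not a single squaring. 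The paper's route is different and is the key missing idea: after the substitution, write $\partial_{p}\varphi(\alpha^{*},p)=\phi(\alpha^{*},p)$ where $\phi(\alpha,p)$ depends on $\alpha$ only through a term linear in $\alpha$ and a term $\tfrac{B(p)}{\alpha}\bigl(\tfrac{A'(p)}{A(p)}-\tfrac{D'(p)}{D(p)}-\tfrac{B'(p)}{B(p)}\bigr)$; one then shows $\tfrac{A'}{A}-\tfrac{D'}{D}-\tfrac{B'}{B}\le 0$ on the domain (this is where the hard algebra lives, via the auxiliary polynomials $E,F$ and the verified facts $F\le 0$, $F^{2}\ge E^{2}T^{2}$), so that $\phi(\cdot,p)$ is concave in $\alpha$, and finally checks $\phi(1,p)=0$ and $\phi(2,p)\ge 0$, which by concavity gives $\phi(\alpha,p)\ge 0$ for all $1<\alpha<2$, hence for $\alpha=\alpha^{*}(p)$. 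Without either this concavity-plus-endpoints argument or a completed polynomial verification of your own, the decisive step of the lemma remains unproved.
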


By the lemma above, $\sup_{\frac{1-\rho}{2-2\rho+\rho^{2}}\le p\le\frac{1}{2}}\alpha^{*}(p)=\alpha^{*}(\frac{1}{2})$.
It is easy to check that $\alpha^{*}(\frac{1}{2})=\theta^{-1}(\frac{1-\rho}{1+\rho})$.
Hence Lemma \ref{lem:phi} holds for $\alpha\ge\theta^{-1}(\frac{1-\rho}{1+\rho})$.

Combining all the cases above completes the proof of Lemma \ref{lem:phi}. 
\end{proof}

\subsubsection{\label{subsec:Proof-of-Lemma}Proof of Lemma \ref{lem:-1}}

Denote $\varphi_{1}(\alpha,p):=\frac{\partial}{\partial\alpha}\varphi(\alpha,p)$
and $\varphi_{2}(\alpha,p):=\frac{\partial}{\partial p}\varphi(\alpha,p)$.
By the assumption $\alpha^{*}(p)>1$, the definition of $\alpha^{*}(p)$,
and the concavity of $\varphi(\alpha,p)$ in $\alpha$, we have $\varphi_{1}(\alpha^{*}(p),p)<0$.
By the implicit function theorem, 
\[
\frac{\mathrm{d}}{\mathrm{d}p}\alpha^{*}(p)=\frac{-\varphi_{2}(\alpha^{*}(p),p)}{\varphi_{1}(\alpha^{*}(p),p)}.
\]
Hence, it suffices to show $\varphi_{2}(\alpha^{*}(p),p)\ge0$.

Observe that 
\begin{align}
\varphi_{2}(\alpha,p) & =-A'(p)D(p)^{\alpha-1}-(\alpha-1)A(p)D(p)^{\alpha-2}D'(p)-\frac{B'(p)}{\alpha}+C'(p)\label{eq:-86}
\end{align}
Since $\alpha^{*}=\alpha^{*}(p)$ is a solution, i.e., $\varphi(\alpha^{*},p)=0$,
we obtain $A(p)D(p)^{\alpha^{*}-1}=C(p)-\frac{B(p)}{\alpha^{*}}$.
Substituting it into \eqref{eq:-86} with $\alpha$ replaced by $\alpha^{*}$,
we obtain 
\begin{align}
\varphi_{2}(\alpha^{*},p) & =-A'(p)D(p)^{\alpha^{*}-1}-(\alpha^{*}-1)A(p)D(p)^{\alpha^{*}-2}D'(p)-\frac{B'(p)}{\alpha^{*}}+C'(p)\nonumber \\
 & =-A(p)D(p)^{\alpha^{*}-1}\Big(\frac{A'(p)}{A(p)}+(\alpha^{*}-1)\frac{D'(p)}{D(p)}\Big)-\frac{B'(p)}{\alpha^{*}}+C'(p)\nonumber \\
 & =-(C(p)-\frac{B(p)}{\alpha^{*}})\Big(\frac{A'(p)}{A(p)}+(\alpha^{*}-1)\frac{D'(p)}{D(p)}\Big)-\frac{B'(p)}{\alpha^{*}}+C'(p)\nonumber \\
 & =\phi(\alpha^{*},p),\label{eq:-62}
\end{align}
where 
\begin{align}
\phi(\alpha,p)& :=-C(p)\Big(\frac{A'(p)}{A(p)}+(\alpha-1)\frac{D'(p)}{D(p)}\Big)+\frac{B(p)}{\alpha}\Big(\frac{A'(p)}{A(p)}-\frac{D'(p)}{D(p)}-\frac{B'(p)}{B(p)}\Big) \\
& \qquad +B(p)\frac{D'(p)}{D(p)}+C'(p).
\end{align}
By standard arguments, it can be shown that 
\begin{equation}
\frac{A'(p)}{A(p)}-\frac{D'(p)}{D(p)}-\frac{B'(p)}{B(p)}\le0;\label{eq:}
\end{equation}
see details below. Hence, given $\frac{1-\rho}{2-2\rho+\rho^{2}}\le p\le\frac{1}{2}$,
$\phi(\alpha,p)$ is concave in $\alpha>0$. 

On the other hand, using Mathematica, it is easy to show that $\phi(1,p)=0,\phi(2,p)\ge0$
for $\frac{1-\rho}{2-2\rho+\rho^{2}}\le p\le\frac{1}{2}$. Hence,
$\phi(\alpha,p)\ge0$ for $1<\alpha<2$ and $\frac{1-\rho}{2-2\rho+\rho^{2}}\le p\le\frac{1}{2}$,
which, combined with \eqref{eq:-62}, implies $\varphi_{2}(\alpha^{*}(p),p)\ge0$
since by assumption, $1<\alpha^{*}(p)<2$. This completes the proof
of Lemma \ref{lem:-1}.

For completeness, we next provide proof details of \eqref{eq:}. Observe
that

\[
\frac{A'(p)}{A(p)}-\frac{D'(p)}{D(p)}-\frac{B'(p)}{B(p)}=\frac{p(F(p)+E(p)T(p))}{2(2p+1)T(p)^{2}(p\rho+1+T(p))A(p)},
\]
where 
\begin{align*}
E(p) & :=-16(2\rho^{4}-9\rho^{3}+17\rho^{2}-16\rho+6)p^{3}\\
 & \qquad-8(\rho^{4}+\rho^{3}-10\rho^{2}+14\rho-7)p^{2}-8(\rho^{3}-\rho^{2}-2\rho+2)p-2(\rho-1)^{2},
\end{align*}
and 
\begin{align*}
F(p):= & -16(2\rho^{5}-11\rho^{4}+27\rho^{3}-36\rho^{2}+26\rho-8)p^{4} \\
& \qquad -4(2\rho^{5}+4\rho^{4}-39\rho^{3}+86\rho^{2}-83\rho+32)p^{3}\\
 & \qquad -4(3\rho^{4}-4\rho^{3}-5\rho^{2}+13\rho-7)p^{2}-(\rho-1)^{2}(5\rho+4)p-(\rho-1)^{2}.
\end{align*}
Using Mathematica\footnote{The code can be found in the link https://www.dropbox.com/s/kll3157wfuwaw7c/code.nb?dl=0},
one can show that $F(p)\le0$ and $F(p)^{2}\geq E(p)^{2}T(p)^{2}$
for $\frac{1-\rho}{2-2\rho+\rho^{2}}\le p\le\frac{1}{2}$ (and $0<\rho<1$).
Hence, $F(p)+E(p)T(p)\le0$, which in turn implies \eqref{eq:}. 

\subsection{Case of $\alpha\in[2,3]$}

For $\alpha\in[2,3]$, by Lemma \ref{lem:BarnesOzgur}, to prove Corollary
\ref{cor:asymmetricpower}, it suffices to prove the case $\alpha=3$.
Here we let $\Phi(t)=t^{3}$. Observe that $\Phi'$ is convex. By
Remark \ref{rem:convex}, we have $\widetilde{\Gamma}_{\rho}(a)=\sup_{0\le z_{2}\leq a}g(z_{2}),$
where 
\[
g(z_{2}):=a-p+p(a-\rho z_{1})^{3}+(1-a)(a-\rho z_{2})^{3}
\]
with 
\begin{align*}
 & z_{1}=\frac{z_{2}(\rho(1-z_{2})-(1-a))}{a-\rho z_{2}},\\
 & p=\frac{(1-a)(a-\rho z_{2}){}^{2}}{1-a+\rho^{2}z_{2}-(1-a+\rho z_{2})^{2}}.
\end{align*}
For $a=1/2$, by defining $z=2\rho z_{2}\in[0,\rho]$, we have $\widetilde{\Gamma}_{\rho}(a)=\sup_{0\le z\leq\rho}h(z)$,
where 
\[
h(z):=g(\frac{z}{2\rho})=\frac{1-(1-4\rho)z-(2\rho^{2}+2\rho+1)z^{2}-(1-2\rho)z^{3}}{8(1-z)}.
\]

By standard arguments, it can be shown that $h'(z)\ge0$; see details
below. Hence, $h(z)$ is increasing, which implies $\widetilde{\Gamma}_{\rho}(a)=h(\rho)=\frac{1}{2}\Phi(\frac{1+\rho}{2})+\frac{1}{2}\Phi(\frac{1-\rho}{2})$.
Note that the most RHS is attained by dictator functions, completing
the proof.

For completeness, we next provide proof details of $h'(z)\ge0$. Taking
derivative, we have $h'(z)=\frac{\varphi(z)}{4(z-1)^{2}},$ where
\[
\varphi(z):=(1-2\rho)z^{3}+(\rho^{2}+4\rho-1)z^{2}-(2\rho^{2}+2\rho+1)z+2\rho.
\]
Taking derivative again, we have 
\[
\varphi'(z)=(3-6\rho)z^{2}+2(\rho^{2}+4\rho-1)z-2\rho^{2}-2\rho-1.
\]
For $\rho\le1/2$, $\varphi'(z)$ is convex in $z$. Hence for this
case, $\varphi'(z)\le\max\{\varphi'(0),\varphi'(\rho)\}\le0$. For
$\rho>1/2$, $\varphi'(z)$ is increasing in $z$ for $z\leq\rho\leq\frac{\rho^{2}+4\rho-1}{6\rho-3}$.
Hence for this case, $\varphi'(z)\le\varphi'(\rho)\le0$. Combining
these two cases, $\varphi'(z)\le0$ and hence $\varphi(z)$ is decreasing
for $\rho\in[0,1]$. Hence $\varphi(z)\ge\varphi(\rho)=(1-\rho)^{3}\rho\ge0$,
which implies $h'(z)\ge0$. 

\section{\label{sec:Proof-of-Lemma}Proof of Lemma \ref{lem:W-W}}

Let $\mathcal{B}$ be the support of $f$. Hence $|\mathcal{B}|=a2^{n}$.
Denote $\mathcal{A}_{1}:=\{\mathbf{x}:x_{1}=1\}$ and $\mathcal{A}_{1}^{c}:=\{-1,1\}^{n}\backslash\mathcal{A}_{1}=\{\mathbf{x}:x_{1}=-1\}$.
Then by definition, $|\mathcal{B}\cap\mathcal{A}_{1}|=\frac{a+\beta}{2}2^{n}$.
Let $\mathcal{A}\subseteq\mathcal{A}_{1}$ be an arbitrary subset
such that $\mathcal{B}\cap\mathcal{A}_{1}\subseteq\mathcal{A}$ and
$|\mathcal{A}|=a2^{n}$. Denote $g:=1_{\mathcal{A}},h:=1\{\mathbf{x}\in\{-1,1\}^{n-1}:(1,\mathbf{x})\in\mathcal{A},(1,\mathbf{x})\notin\mathcal{B}\}$,
and $l:=1\{\mathbf{x}\in\{-1,1\}^{n-1}:(-1,\mathbf{x})\in\mathcal{B}\}$.
Then $\mathbb{E}h=\mathbb{E}l=\frac{a-\beta}{2}2^{n}$. Moreover,
$f=g-h\cdot1\{x_{1}=1\}+l\cdot1\{x_{1}=-1\}$, and hence, $\hat{f}_{\mathcal{S}}=\hat{g}_{\mathcal{S}}-\frac{1}{2}\hat{h}_{\mathcal{S}\backslash\{1\}}+\frac{1}{2}\hat{l}_{\mathcal{S}\backslash\{1\}},\forall\mathcal{S}\subseteq[n]$.
Note that $h$ and $l$ are Boolean functions on the $(n-1)$-dimensional
space. Hence, their Fourier coefficients $\hat{h}_{\mathcal{S}},\hat{l}_{\mathcal{S}},\mathcal{S}\subseteq[2:n]$
are also defined on the $(n-1)$-dimensional space. By the Minkowski
inequality, 
\begin{align}
\mathbf{W}_{1}[f]-\beta^{2} & =\sum_{i=2}^{n}\hat{f}_{\{i\}}^{2}=\sum_{i=2}^{n}(\hat{g}_{\{i\}}-\frac{1}{2}\hat{h}_{\{i\}}+\frac{1}{2}\hat{l}_{\{i\}})^{2}\nonumber \\
 & \leq\Big(\sqrt{\sum_{i=2}^{n}\hat{g}_{\{i\}}^{2}}+\sqrt{\frac{1}{4}\sum_{i=2}^{n}\hat{h}_{\{i\}}^{2}}+\sqrt{\frac{1}{4}\sum_{i=2}^{n}\hat{l}_{\{i\}}^{2}}\Big)^{2}\nonumber \\
 & =\Big(\sqrt{\mathbf{W}_{1}[g]-a^{2}}+\frac{1}{2}\sqrt{\mathbf{W}_{1}[h]}+\frac{1}{2}\sqrt{\mathbf{W}_{1}[l]}\Big)^{2}\label{eq:-66}\\
 & \leq\Big(\sqrt{W^{(n)}(a)-a^{2}}+\sqrt{W^{(n-1)}(a-\beta)}\Big)^{2}\nonumber 
\end{align}
where \eqref{eq:-66} follows by the facts that $\hat{g}_{\{1\}}=a$
and $\mathbf{W}_{1}[h]=\sum_{i=2}^{n}\hat{h}_{\{i\}}^{2},\mathbf{W}_{1}[l]=\sum_{i=2}^{n}\hat{l}_{\{i\}}^{2}$
since the Fourier coefficients of $h,l$ are defined on the $(n-1)$-dimensional
space. Therefore, \eqref{eq:-58} holds.

By Parseval's theorem, $W^{(n)}(a)\le a-a^{2}$. Substituting this
into \eqref{eq:-58} and specializing \eqref{eq:-58} to the case
$a=\frac{1}{2}$, we obtain \eqref{eq:-20-4}. On the other hand,
this upper bound is attained by the Boolean function 
\[
f(\mathbf{x})=1\{\mathbf{x}:x_{1}=1,(x_{2},x_{3},...,x_{n})\in\mathcal{A}^{c}\}+1\{-\mathbf{x}:x_{1}=1,(x_{2},x_{3},...,x_{n})\in\mathcal{A}\}
\]
for some $\mathcal{A}\subseteq\{-1,1\}^{n-1}$ such that $1_{\mathcal{A}}$
attains $W^{(n-1)}(\frac{1}{2}-k)$. This can be seen from the fact
that 
\[
f(\mathbf{x})=\frac{1+x_{1}}{2}-\sum_{\mathcal{S}\subseteq[2:n]:|\mathcal{S}|\textrm{ odd}}\hat{g}_{\mathcal{S}}\chi_{\mathcal{S}}(x_{2},..,x_{n})-x_{1}\sum_{\mathcal{S}\subseteq[2:n]:|\mathcal{S}|\textrm{ even}}\hat{g}_{\mathcal{S}}\chi_{\mathcal{S}}(x_{2},..,x_{n})
\]
where $\{\hat{g}_{\mathcal{S}}\}$ are Fourier coefficients of $g(x_{2},..,x_{n})=1_{\mathcal{A}}(x_{2},..,x_{n})$.

\section{\label{sec:Proof-of-Proposition-1}Proof of Proposition \ref{prop:property-1}}

The proofs of Statements 1-3 are exactly the same as those of Statements
1-3 of Proposition \ref{prop:property}. Here we only provide the
proof idea for Statement 4, and omit the details since it is similar
to that of Statement 4 of Proposition \ref{prop:property}. For even
$m\ge8$, denote $(z_{s,x,i},p_{s,x,i})_{(s,x,i)\in\mathcal{S}\times\mathcal{X}\times[m]}$
as an optimal solution to the maximization in \eqref{eq:generalboundsimple}
such that $z_{-s,-x,i}=-z_{s,x,i},p_{-s,-x,i}=p_{s,x,i}$ for all
$(s,x,i)$. Such an optimal solution always exists since for any optimal
solution $(\hat{z}_{s,x,i},\hat{p}_{s,x,i})_{(s,x,i)\in\mathcal{S}\times\mathcal{X}\times[m/2]}$
to the optimization problem in \eqref{eq:generalboundsimple} with
$m\leftarrow m/2$, we can construct a desired distribution $P_{SXZ}=\sum_{s,x,i}\frac{1}{2}\hat{p}_{s,x,i}(\delta_{s,x,\hat{z}_{s,x,i}}+\delta_{-s,-x,-\hat{z}_{s,x,i}})$,
which is optimal for the optimization in \eqref{eq:generalboundsimple-2}
and meanwhile satisfies $P_{SXZ}(-s,-x,-z)=P_{SXZ}(s,x,z)$ for all
$(s,x,z)$.

Suppose that LICQ is not satisfied by the solution $(z_{s,x,i},p_{s,x,i})_{(s,x,i)\in\mathcal{S}\times\mathcal{X}\times[m]}$.
Denote 
\begin{equation}
\mathcal{I}:=\{(s,x,i):a+\rho z_{s,x,i}=0\}\textrm{ and }\mathcal{J}:=\{(s,x,i):a+\rho z_{s,x,i}=1\}.\label{eq:IJ-1-1}
\end{equation}
By Statement 3, we can write $\mathcal{I}=\mathcal{I}^{-}\cup\mathcal{I}^{+}$
and $\mathcal{J}=\mathcal{J}^{-}\cup\mathcal{J}^{+}$ for some $\mathcal{I}^{-}\subseteq\{(-a,-1)\}\times[m]$,
$\mathcal{I}^{+}\subseteq\{(-a,1)\}\times[m]$, $\mathcal{J}^{-}\subseteq\{(1-a,-1)\}\times[m]$,
and $\mathcal{J}^{+}\subseteq\{(1-a,1)\}\times[m]$. The gradients
of the active inequality constraints (including \eqref{eq:con3-3}
by Statement 2) and the gradients of the equality constraints constitute
the $(|\mathcal{I}|+|\mathcal{J}|+5)\times(4m)$ matrix 
\begin{equation}
G:=\begin{bmatrix}\rho\mathbf{I}_{\mathcal{I}} & \mathbf{0}_{1\times2m}\\
-\rho\mathbf{I}_{\mathcal{J}} & \mathbf{0}_{1\times2m}\\
\mathbf{0}_{1\times2m} & (\mathbf{I}_{1\times m},\mathbf{0}_{1\times m})\\
\mathbf{0}_{1\times2m} & (\mathbf{0}_{1\times m},\mathbf{I}_{1\times m})\\
(p_{s,x,i})_{s,x,i} & (z_{s,x,i})_{s,x,i}\\
(xp_{s,x,i})_{s,x,i} & (xz_{s,x,i})_{s,x,i}\\
(p_{s,x,i}(2z_{s,x,i}-\rho s))_{s,x,i} & (z_{s,x,i}^{2}-\rho sz_{s,x,i})_{s,x,i}
\end{bmatrix}.\label{eq:G-3-1-1}
\end{equation}
The assumption that LICQ is not satisfied implies that $G$ is not
of full rank, or equivalently,

\[
\hat{G}:=\begin{bmatrix}(p_{s,-1,i})_{(s,i)\in(\mathcal{I}^{-}\cup\mathcal{J}^{-})^{c}} & \mathbf{0}_{1\times(2m-|\mathcal{I}^{+}|-|\mathcal{J}^{+}|)} & A^{-} & \mathbf{0}_{1\times2(m-1)}\\
\mathbf{0}_{1\times(2m-|\mathcal{I}^{-}|-|\mathcal{J}^{-}|)} & (p_{s,1,i})_{(s,i)\in(\mathcal{I}^{+}\cup\mathcal{J}^{+})^{c}} & \mathbf{0}_{1\times2(m-1)} & A^{+}\\
(p_{s,-1,i}(2z_{s,-1,i}-\rho s))_{(s,i)\in(\mathcal{I}^{-}\cup\mathcal{J}^{-})^{c}} & (p_{s,1,i}(2z_{s,1,i}-\rho s))_{(s,i)\in(\mathcal{I}^{+}\cup\mathcal{J}^{+})^{c}} & B^{-} & B^{+}
\end{bmatrix}
\]
is not of full rank where 
\begin{align*}
A^{-} & :=(z_{s,-1,i}-z_{s,-1,1})_{(s,i)\in\mathcal{S}\times[2:m]},\\
A^{+} & :=(z_{s,1,i}-z_{s,1,1})_{(s,i)\in\mathcal{S}\times[2:m]},\\
B^{-} & :=((z_{s,-1,i}-z_{s,-1,1})(z_{s,-1,i}+z_{s,-1,1}-\rho s))_{(s,i)\in\mathcal{S}\times[2:m]},\\
B^{+} & :=((z_{s,1,i}-z_{s,1,1})(z_{s,1,i}+z_{s,1,1}-\rho s))_{(s,i)\in\mathcal{S}\times[2:m]}.
\end{align*}
We next prove that under this condition, there is an optimal solution
of the form in \eqref{eq:-50}.

We first prove that for each $(s,x)$, $z_{s,x,i}$'s are identical
for all $i\in[m]$. Since the submatrix 
\[
\begin{bmatrix}(p_{-\frac{1}{2},-1,i})_{(-\frac{1}{2},i)\in(\mathcal{I}^{-}\cup\mathcal{J}^{-})^{c}} & (z_{-\frac{1}{2},-1,i}-z_{-\frac{1}{2},-1,1})_{i\in[2:m]}\\
(p_{-\frac{1}{2},-1,i}(2z_{-\frac{1}{2},-1,i}+\rho\frac{1}{2}))_{(-\frac{1}{2},i)\in(\mathcal{I}^{-}\cup\mathcal{J}^{-})^{c}} & ((z_{-\frac{1}{2},-1,i}-z_{-\frac{1}{2},-1,1})(z_{-\frac{1}{2},-1,i}+z_{-\frac{1}{2},-1,1}+\rho\frac{1}{2}))_{i\in[2:m]}
\end{bmatrix}
\]
of $\hat{G}$ is not of full rank (due to that $p_{s,x,i}>0$ for
all $s,x,i$), we know that $z_{-\frac{1}{2},-1,i}$'s are identical
for all $i\in[m]$. Similarly, the same conclusion holds for other
pairs $(s,x)$.

Therefore, $Z$ is a function of $(S,X)$, or equivalently, can be
expressed in the form of \eqref{eq:-50}.

\section{\label{sec:Proof-of-Theorem-symmetric-1}Proof of Theorem \ref{thm:symmetric-1}}

The Lagrangian of the optimization problem in \eqref{eq:generalboundsimple-2}
is 
\begin{align*}
\mathcal{L}:= & \sum_{s,x,i}p_{s,x,i}\Big\{\Phi(\frac{1}{2}+\rho z_{s,x,i})+\theta_{s,x,i}(\frac{1}{2}+\rho z_{s,x,i})+\theta_{s,x,i}'(1-(\frac{1}{2}+\rho z_{s,x,i}))\\
 & \qquad+\lambda(\rho sz_{s,x,i}-z_{s,x,i}^{2}+(1-\rho)\omega(\beta))+\eta z_{s,x,i}+\eta'(xz_{s,x,i}-\beta)\Big\}\\
 & \qquad+\sum_{s,x}\mu_{s,x}(\sum_{i}p_{s,x,i}-P_{SX}(s,x)).
\end{align*}
Note that the distribution in \eqref{eq:-50} (by choosing proper
$z_{1},z_{2}$ so that it is feasible to the program in \eqref{eq:generalboundsimple-2})
is also feasible in the program \eqref{eq:-49} (specifically, which
corresponds to $p=\frac{1}{4}+\frac{\beta}{2},q=\frac{1}{4}-\frac{\beta}{2}$).
Hence the objective value induced by this solution is no more than
$\overline{\Upsilon}_{\rho}$. From this observation and Statement
4 of Proposition \ref{prop:property-1}, it suffices to consider the
case in which the LICQ is satisfied for some symmetric optimal solution
such that $P_{SXZ}(-s,-x,-z)=P_{SXZ}(s,x,z)$ for all $(s,x,z)$.
We next prove the desired result for this case.

By the KKT theorem, we have the following first-order necessary conditions
for (local) optimal solutions:For even $m\ge8$, 
\begin{align}
 & \frac{\partial\mathcal{L}}{\partial z_{s,x,i}}=p_{s,x,i}[\rho(\Phi'(\frac{1}{2}+\rho z_{s,x,i})+\theta_{s,x,i}-\theta_{s,x,i}')+\lambda(s-2z_{s,x,i})+\eta+\eta'x]=0\label{eq:dLagrangian-1}\\
 & \frac{\partial\mathcal{L}}{\partial p_{s,x,i}}=\Phi(\frac{1}{2}+\rho z_{s,x,i})+\lambda(\rho sz_{s,x,i}-z_{s,x,i}^{2}+(1-\rho)\omega(\beta)) \\
 & \qquad  \qquad   \qquad \qquad \qquad \qquad +\eta z_{s,x,i}+\eta'(xz_{s,x,i}-\beta)+\mu_{s,x}=0\\
 & 0\leq\frac{1}{2}+\rho z_{s,x,i}\leq1,\forall(s,x,i)\in\mathcal{S}\times\mathcal{X}\times[m]\label{eq:-78-2-2-1}\\
 & \sum_{s,x,i}p_{s,x,i}z_{s,x,i}=0,\\
 & \sum_{s,x,i}p_{s,x,i}xz_{s,x,i}=\beta,\\
 & \sum_{s,x,i}p_{s,x,i}(z_{s,x,i}^{2}-\rho sz_{s,x,i})=(1-\rho)\omega(\beta).\\
 & \sum_{i}p_{s,x,i}=P_{SX}(s,x),\forall(s,x)\in\mathcal{S}\times\mathcal{X},\\
 & p_{s,x,i}>0,\forall(s,x,i)\in\mathcal{S}\times\mathcal{X}\times[m]\\
 & \theta_{s,x,i}(\frac{1}{2}+\rho z_{s,x,i})=0\\
 & \theta_{s,x,i}'(\frac{1}{2}-\rho z_{s,x,i})=0\\
 & \lambda\geq0,\overrightarrow{\theta},\overrightarrow{\theta'}\geq\overrightarrow{0}.\label{eq:-17-1}
\end{align}

For $(s,x,i)$ such that $-\frac{1}{2\rho}<z_{s,x,i}<\frac{1}{2\rho}$,
we have $\theta_{s,x,i}=\theta_{s,x,i}'=0$, and hence, 
\begin{align}
\frac{1}{p_{s,x,i}}\frac{\partial\mathcal{L}}{\partial z_{s,x,i}} & =g(s,z_{s,x,i})=0\label{eq:-16-1}\\
\frac{\partial\mathcal{L}}{\partial p_{s,x,i}} & =G(s,z_{s,x,i})+\Phi(\frac{1}{2})+\lambda(1-\rho)\omega(\beta)-\eta'\beta+\mu_{s,x}=0,\label{eq:-6-1}
\end{align}
where 
\begin{align*}
g(s,x,z) & :=\rho\Phi'(\frac{1}{2}+\rho z)+\lambda(s-2z)+\eta+\eta'x,\\
G(s,x,z) & :=\int_{0}^{z}g(s,x,z)\mathrm{d}z.
\end{align*}
Equations \eqref{eq:geq} and \eqref{eq:Geq} imply that given $(s,x)$,
$g(s,x,z)$ is always equal to $0$ and $G(s,x,z)$ remains the same
for all $z\in\{z_{s,x,i}\}_{i\in[m]}$.

By assumption, $\Phi$ is symmetric w.r.t. $\frac{1}{2}$ and $\Phi'$
is increasing, continuous on $(0,1)$, and strictly concave on $(0,\frac{1}{2}]$.
Hence, for each $(s,x)$, $g(s,x,z)=0$ with $z$ unknown has at most
three distinct solutions. By the same argument used in the proof of
Theorem \ref{thm:symmetric}, we have that given $(s,x)$, $z_{i}$
and $z_{i+1}$ cannot be solutions of \eqref{eq:-6-1} at the same
time. Moreover, if given $(s,x)$, the set $\{z_{s,x,i}\}_{i\in[m]}$
contains two distinct solutions of \eqref{eq:-16-1} and \eqref{eq:-6-1},
then in this case, \eqref{eq:-16-1} and \eqref{eq:-6-1} must have
three distinct solutions, and the smallest and the largest among them,
denoted by $z_{1},z_{3}$ respectively, are contained in $\{z_{s,x,i}\}_{i\in[m]}$.
Since $G$ is the integral of $g$ and $G(s,x,z_{1})=G(s,x,z_{3})$
and $\Phi'$ is symmetric w.r.t. $1/2$, we have $z_{1}+z_{3}=0$.
Hence, one of the following three cases could occur for some $\hat{z}_{1},\hat{z}_{2}$.
\\
 Case 1: $\{z_{-\frac{1}{2},-1,i}\}_{i\in[m]}\subseteq\{-\frac{1}{2\rho},\hat{z}_{1}\}$
and $\{z_{\frac{1}{2},-1,i}\}_{i\in[m]}\subseteq\{\hat{z}_{2},\frac{1}{2\rho}\}$.
\\
 Case 2: $\{z_{-\frac{1}{2},-1,i}\}_{i\in[m]}\subseteq\{-\hat{z}_{1},\hat{z}_{1}\}$
and $\{z_{\frac{1}{2},-1,i}\}_{i\in[m]}\subseteq\{\frac{1}{2\rho}\}$.
\\
 Case 3: $\{z_{-\frac{1}{2},-1,i}\}_{i\in[m]}\subseteq\{-\frac{1}{2\rho}\}$
and $\{z_{\frac{1}{2},-1,i}\}_{i\in[m]}\subseteq\{-\hat{z}_{2},\hat{z}_{2}\}$.

Since by assumption, the optimal solution $P_{SXZ}$ is symmetric
in the sense that $P_{SXZ}(-s,-x,-z)=P_{SXZ}(s,x,z)$ for all $(s,x,z)$,
the values of $\{z_{s,1,i}\}_{(s,i)\in\mathcal{S}\times[m]}$ are
determined by $\{z_{s,-1,i}\}_{(s,i)\in\mathcal{S}\times[m]}$. Specifically,
for Case 1, $P_{SXZ}$ can be expressed as 
\[
P_{SXZ}(s,x,z)=\begin{cases}
\frac{1+2\beta}{4}-p & (s,x,z)=\pm(-\frac{1}{2},-1,-\frac{1}{2\rho})\\
p & (s,x,z)=\pm(-\frac{1}{2},-1,\hat{z}_{1})\\
\frac{1-2\beta}{4}-q & (s,x,z)=\pm(\frac{1}{2},-1,\frac{1}{2\rho})\\
q & (s,x,z)=\pm(\frac{1}{2},-1,\hat{z}_{2})
\end{cases}
\]
Solving $\mathbb{E}Z=0,\mathbb{E}[XZ]=\beta$ gives the solution $p,q$
in \eqref{eq:p-2} and \eqref{eq:q-2} with $z_{1}\leftarrow\hat{z}_{1},z_{2}\leftarrow\hat{z}_{2}$.
This solution yields $\overline{\Upsilon}_{\rho}$.

For Case 2, 
\[
P_{SXZ}(s,x,z)=\begin{cases}
\frac{1+2\beta}{4}-p & (s,x,z)=\pm(-\frac{1}{2},-1,-\hat{z}_{1})\\
p & (s,x,z)=\pm(-\frac{1}{2},-1,\hat{z}_{1})\\
\frac{1-2\beta}{4} & (s,x,z)=\pm(\frac{1}{2},-1,\frac{1}{2\rho})
\end{cases}
\]
Solving $\mathbb{E}Z=0,\mathbb{E}[XZ]=\beta$ gives $\hat{z}_{1}=-\frac{4\beta\rho-2\beta+1}{4\beta\rho-16p\rho+2\rho}$.
However, for this case, $\mathbb{E}[Z^{2}]-\rho\mathbb{E}[SZ]\ge(1-\rho)/4$,
which is strictly larger than $(1-\rho)G(\beta)$ for $\beta\in[0,1/2)$.
Moreover, $\mathbb{E}[Z^{2}]-\rho\mathbb{E}[SZ]=(1-\rho)/4$ holds
only if $\beta=p=1/2$. For the case $\beta=p=1/2$, we have $z_{0}=1/2$,
which corresponds to the case in which the Boolean functions are in
fact the dictator functions. This feasible solution is also feasible
in the program \eqref{eq:-49} (specifically, which corresponds to
$\beta=p=1/2,-z_{1}=z_{2}=1/2$). Hence the objective value induced
by this solution is no more than $\overline{\Upsilon}_{\rho}$.

Similarly to Case 2, it can be checked that the objective value for
Case 3 is also no more than $\overline{\Upsilon}_{\rho}$. (In fact,
Case 3 is the same to Case 2 if we replace $\beta\leftarrow-\beta,X\leftarrow-X$
in Case 2. Note that, in this equivalent setting, $\beta$ is nonpositive.)

\backmatter


\bmhead{Acknowledgments}

This research was supported by by the NSFC grant 62101286 and the
Fundamental Research Funds for the Central Universities of China (Nankai
University).

\bmhead{Data availibility} Data sharing is not applicable to this
article as no data were created or analyzed in this study.

\bibliography{ref}
\end{document}